\DeclareMathOperator{\degg}{deg}
\DeclareMathOperator{\Homm}{Hom}
\DeclareMathOperator{\THI}{THI}
\DeclareMathOperator{\muu}{\mu^{orb}}
\DeclareMathOperator{\Imm}{Im}
\DeclareMathOperator{\spann}{span}
\DeclareMathOperator{\cone}{cone}
\DeclareMathOperator{\rank}{rank}
\newtheorem{THE}{Theorem}[section]
\newtheorem{thm-defn}[THE]{Theorem/Definition}
\newtheorem{LE}[THE]{Lemma}
\newtheorem{PR}[THE]{Proposition}
\newtheorem{COR}[THE]{Corollary}
\theoremstyle{definition}
\newtheorem{DEF}[THE]{Definition}
\newtheorem{eg}[THE]{Example}
\theoremstyle{remark}
\newtheorem*{rmk}{Remark}
\numberwithin{equation}{section}
\author{Yi Xie}
\title{$\mathfrak{sl}(3)$ Khovanov module and the detection of the planar theta-graph}
\date{}
\newcommand{\Address}{{% additional braces for segregating \footnotesize
  \bigskip
  \footnotesize
   Yi Xie, \textsc{Simons Center for Geometry and Physics, State University of New York,
  Stony Brook, NY 11794}\par\nopagebreak
  \textit{E-mail address}: \texttt{yxie@scgp.stonybrook.edu}
}}
\begin{document}
\maketitle

\begin{abstract}
We introduce two invariants called $\mathfrak{sl}(3)$ Khovanov module and pointed $\mathfrak{sl}(3)$ Khovanov homology
for spatial webs (bipartite trivalent graphs). 
Those invariants are related to 
Kronheimer-Mrowka's instanton invariants $J^\sharp$ and $I^\sharp$ for spatial webs by two spectral sequences.
As an application of the spectral sequences, we prove that
$\mathfrak{sl}(3)$ Khovanov module and pointed $\mathfrak{sl}(3)$ Khovanov homology both detect the planar theta graph.
\end{abstract}

\section{Introduction}
In \cite{Kh-Jones}, Khovanov constructed a link homology which categorifies the quantum $\mathfrak{sl}(2)$ link invariant:
the Jones polynomial.   
Later, $\mathfrak{sl}(3)$ and $\mathfrak{sl}(n)$ link homologies were introduced in \cite{Kh-sl3} and \cite{KhR}.

To define the $\mathfrak{sl}(3)$ Khovanov homology of a link $L$ (in $\mathbb{R}^3$),
we need to pick a diagram for $L$ and resolve the crossings
 by resolutions shown in Figure \ref{+01} and Figure \ref{-01}. 
A resolved diagram could be singular. 
In general it is a planar trivalent graph rather than a collection of circles which is the situation of the 
$\mathfrak{sl}(2)$ Khovanov homology. 
A chain complex is defined by assigning abelian groups to those planar trivalent graphs and defining a differential
on the direct sum of those abelian groups. The $\mathfrak{sl}(3)$ Khovanov homology is the homology of the chain complex, which
is a bi-graded abelian group.

The definition of the chain complex depends on the choice of the diagram of the link $L$. 
To obtain a well-defined link invariant, we need to show
that the homology does not depend on the choice of the diagram.
Since any two diagrams of the same link are connected by a 
sequence of Reidemeister moves of three types, it suffices to show that the homology of the chain complex
is invariant under three types of moves. 

In this paper, we consider more general objects: oriented webs embedded in $\mathbb{R}^3$ (see Definition \ref{web}).
Given an oriented web $\Gamma$ in $\mathbb{R}^3$, we can still pick a diagram $D$ for it. 
Resolving the crossings by Figure \ref{+01} or Figure \ref{-01}, we could obtain planar webs. By assigning
 abelian groups
to those planar webs and define a differential properly as in the link case, we obtain a chain complex $F(D)$.
We define the $\mathfrak{sl}(3)$ Khovanov homology for $\Gamma$ to be the homology of this chain complex.
Again the definition depends on the choice of the diagram. 
Any two diagrams of $\Gamma$ 
are connected by a sequence of \emph{five} types of Reidemeister moves according to \cite{Kau-graph,Kau-graph2}.
In addition to the three types of Reidemeister moves in the link case, 
there are two new Reidemeister moves shown in Figures \ref{RIV} and \ref{RV}. To show the homology is a well-defined invariant,
we need to study how it changes under the five types of moves. It turns out that the homology is still 
a well-defined invariant but we lose the absolute bi-grading. 
\begin{THE}
Given two diagrams $D$ and $D'$ of an oriented spatial web $\Gamma$, 
the two chain complexes $F(D)$ and $F(D')$ are chain homotopy equivalent up to a shifting of the bi-grading.
In particular, the
 \emph{relatively} bi-graded abelian group
$\mathcal{H}(\Gamma):=H(F(D))$ is a well-defined invariant of the oriented spatial web.
\end{THE}
There is a $R_\Gamma$-module structure on $\mathcal{H}(\Gamma)$ where $R_\Gamma$ is a ring defined in Section \ref{module-str} which
does not depend on the embedding of $\Gamma$.

In \cite{KM-jsharp}, Kronheimer and Mrowka defined two versions of instanton Floer homologies 
$J^\sharp$ and $I^\sharp$ for spatial webs. 
Their definition of webs is more general than the one used in this paper.  
The two instanton Floer homology theories 
$J^\sharp$ and $I^\sharp$ are defined only with $\mathbb{F}$-coefficients where $\mathbb{F}$ is the field of two elements. 
Given a (oriented) spatial web $\Gamma$, $\mathcal{H}(\Gamma;\mathbb{F})$ and $J^\sharp(\Gamma)$ are both equipped with 
$\mathcal{R}_\Gamma$-module structures where $\mathcal{R}_\Gamma:=R_\Gamma\otimes_\mathbb{Z}\mathbb{F}$. 
From \cite{KM-jsharp} it is known that
$\mathcal{H}(\Gamma;\mathbb{F})$ and $J^\sharp(\Gamma)$ are isomorphic for any planar web $\Gamma$. 
More generally, we have the following.
\begin{THE}\label{s-sequence*}
Let $\Gamma$ be an oriented spatial web. There is a spectral sequence of $\mathcal{R}_\Gamma$-modules whose $E_2$-page is the $\mathfrak{sl}(3)$
Khovanov module $\mathcal{H}(\Gamma;\mathbb{F})$ and which converges to $J^\sharp(\Gamma)$.
\end{THE}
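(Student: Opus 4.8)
The plan is to build the spectral sequence from a cube-of-resolutions filtration on the instanton side and match the first two pages with the Khovanov chain complex. First I would fix a diagram $D$ of the oriented spatial web $\Gamma$ with crossing set $\mathcal{C}$. For each vertex $v$ of the cube $\{0,1\}^{\mathcal{C}}$ one gets a planar web $\Gamma_v$ obtained by taking the $0$- or $1$-resolution at each crossing according to Figures \ref{+01} and \ref{-01}. The key input is Kronheimer--Mrowka's unoriented skein exact triangle for $J^\sharp$ relating the three resolutions at a crossing (the one with a crossing and the two smoothings), together with the fact — already recalled in the excerpt — that $\mathcal{H}(\Gamma';\mathbb{F})\cong J^\sharp(\Gamma')$ for every \emph{planar} web $\Gamma'$. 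Iterating the skein triangle over all crossings of $D$ expresses $J^\sharp(\Gamma)$ as the homology of a complex (an iterated mapping cone) built from the groups $J^\sharp(\Gamma_v)\cong\mathcal{H}(\Gamma_v;\mathbb{F})$; the cube structure endows this complex with a filtration by the number of $1$-resolutions, hence a spectral sequence converging to $J^\sharp(\Gamma)$.

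Next I would identify the $E_1$-page: by construction $E_1^{p,q}=\bigoplus_{|v|=p} \mathcal{H}(\Gamma_v;\mathbb{F})$ in the appropriate grading, and the $d_1$ differential is, edge by edge in the cube, the map on $J^\sharp$ induced by the elementary cobordism between adjacent planar webs. The crucial step is to show that under the isomorphism $J^\sharp(\Gamma_v)\cong\mathcal{H}(\Gamma_v;\mathbb{F})$ these edge maps agree with the edge maps in the $\mathfrak{sl}(3)$ Khovanov cube $F(D)$ (up to the usual signs, which over $\mathbb{F}$ are irrelevant, and up to an overall automorphism that can be absorbed). This is the standard ``functoriality identifies the cube'' argument: each edge map on either side is induced by the same foam/cobordism — a saddle or a $\Theta$-web creation/annihilation — between planar webs, and one checks the two theories assign the same map to these elementary foams, so that the local pieces, and hence $d_1$, coincide. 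Consequently $(E_1,d_1)$ is the Khovanov complex $F(D)$ with $\mathbb{F}$-coefficients, and therefore $E_2=H(F(D);\mathbb{F})=\mathcal{H}(\Gamma;\mathbb{F})$.

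It then remains to upgrade everything to $\mathcal{R}_\Gamma$-modules. Both sides carry compatible module structures: on the instanton side $J^\sharp$ is a module over the ring generated by the edge classes (Kronheimer--Mrowka), on the Khovanov side $\mathcal{H}(\Gamma)$ is an $R_\Gamma$-module by Section \ref{module-str}, and these structures are defined locally near a chosen basepoint on an edge. Since the skein triangle and the elementary cobordism maps are maps of modules over the subring generated by classes supported away from the crossing being resolved — and the basepoints can be chosen away from all crossings — the filtration, differentials, and the identifications above are all $\mathcal{R}_\Gamma$-linear, so the spectral sequence is one of $\mathcal{R}_\Gamma$-modules. Finally, convergence is immediate because the filtration is finite (the cube $\{0,1\}^{\mathcal{C}}$ is finite). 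The main obstacle I anticipate is the edge-map identification in the second paragraph: one must pin down the exact cobordism maps on $J^\sharp$ associated to the bigon/square/theta moves and verify they match the $\mathfrak{sl}(3)$ foam evaluations, which requires a careful local analysis of Kronheimer--Mrowka's excision and skein maps — naturality of the isomorphism for planar webs under elementary foams is the technical heart of the argument.
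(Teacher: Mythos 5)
Your overall skeleton is the same as the paper's: resolve all crossings, iterate Kronheimer--Mrowka's skein triangle to get a cube-shaped filtered object, filter by the number of $1$-resolutions, and identify the $E_1$- and $E_2$-pages with the Khovanov cube via the natural isomorphism $J^\sharp(\Gamma')\cong F(\Gamma';\mathbb{F})$ for planar webs. The "technical heart" you defer to -- naturality of that isomorphism under foam cobordisms -- is exactly the paper's Proposition \ref{J=F}, so that part of your plan is sound and matches the paper. Two points, however, are under-specified and one of them is the actual new content of the theorem. First, the iterated mapping cone cannot be "built from the groups $J^\sharp(\Gamma_v)$": it must be built at chain level, $\mathbf{C}=\bigoplus_v C_v$ with maps $f_{vu}$ for \emph{all} $v\le u$, including the higher-order components with $|u-v|_1\ge 2$ defined by counting instantons over families of metrics; only then does one prove (by iterating the quasi-isomorphism of Corollary \ref{3-gon-quasi-iso}, as in Proposition \ref{cube-quasi-iso}) that its homology is $J^\sharp(\Gamma)$, after which the filtration by $|v|_1$ gives the spectral sequence with $d_1$ the edge cobordism maps. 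Your later sentences are consistent with this, but as written the first paragraph conflates the homology-level cube (which only computes $E_2$) with the filtered chain-level object.

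Second, and more seriously, the $\mathcal{R}_\Gamma$-module statement does not follow from observing that the triangle and cobordism maps are "maps of modules": there is no chain-level $\mathcal{R}_\Gamma$-module structure on the Floer complexes, so module-linearity of induced maps on homology does not by itself endow the spectral sequence (beyond the $E_1$-page) with a module structure compatible with convergence to $J^\sharp(\Gamma)$. The paper's proof constructs, for each basepoint $\delta_i$, a filtration-respecting chain operator $\mathbf{R}$ on $\mathbf{C}$ by cutting down the parametrized moduli spaces $M_{vu}(\alpha,\beta)_1$ with a divisor $V(\delta_i)$, proves $\mathbf{F}\mathbf{R}+\mathbf{R}\mathbf{F}=0$ (Proposition \ref{CF-chain}), and shows the quasi-isomorphism to the Floer complex of $\Gamma$ intertwines $\mathbf{R}_\ast$ with the operator $X_i$ (Proposition \ref{cube-quasi-iso}); this requires a genuine bubbling analysis, which works because the basepoints are chosen away from the seams of the foams and one works in characteristic $2$. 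Your plan should include this chain-level construction (or an equivalent filtered-homotopy argument); placing the basepoints away from the crossings is necessary but not sufficient.
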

This spectral sequence is suggested by Kronheimer \cite{KM-sl3-ss}. 
A similar result in the $\mathfrak{sl}(2)$ Khovanov homology case is proved
in \cite{HN-module}: they showed that  Ozsv\'{a}th-Szab\'{o}'s spectral sequence in \cite{OS-ss} 
respects the module structures on
Khovanov homology and (hat) Heegaard Floer homology. 

Given an oriented spatial web $\Gamma$ with mark points $\boldsymbol{\delta}=\{\delta_i\}$ in the interior of edges, we 
define a relatively bi-graded homological invariant $\mathcal{H}(\Gamma,\boldsymbol{\delta})$ by imitating the definition of pointed 
$\mathfrak{sl}(2)$ Khovanov homology
in \cite{BLS}. When $\boldsymbol{\delta}=\emptyset$, it is nothing but $\mathcal{H}(\Gamma)$. We construct a spectral sequence
relating  $\mathcal{H}(\Gamma,\boldsymbol{\delta};\mathbb{F})$ to $I^\sharp(\Gamma)$.
\begin{THE}\label{I-ss*}
Suppose $\Gamma$ is a \emph{connected} oriented spatial web and ${\boldsymbol{\delta}}=\{\delta_i\}$ is a collection of 
points in the interior of edges of $\Gamma$ such that the homology classes of meridians around $\delta_i$ form a basis
of $H_1(S^3\setminus \Gamma;\mathbb{F})$. Then there is a spectral sequence whose $E_2$-page is 
$\mathcal{H}(\Gamma,{\boldsymbol{\delta}};\mathbb{F})$ and which converges to $I^\sharp(\Gamma)$.
\end{THE}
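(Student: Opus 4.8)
The plan is to deduce Theorem~\ref{I-ss*} from Theorem~\ref{s-sequence*} by trading the pointed invariants of $\Gamma$ for the ordinary invariants of an auxiliary web. Concretely, I would attach at each marked point $\delta_i$ a small standard local tangle $\Theta_i$ — a doubled edge, a Hopf-type linking circle, or whatever local picture realizes on the cube of resolutions exactly the modification by which $\mathcal H(\Gamma,\boldsymbol{\delta})$ is defined in imitation of \cite{BLS} — and write $\Gamma^{\boldsymbol{\delta}}$ for the resulting web. The two facts to establish are then: (a) $J^\sharp(\Gamma^{\boldsymbol{\delta}})\cong I^\sharp(\Gamma)$; and (b) $\mathcal H(\Gamma^{\boldsymbol{\delta}};\mathbb F)\cong \mathcal H(\Gamma,\boldsymbol{\delta};\mathbb F)$. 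Statement (b) is purely diagrammatic: since the pointed $\mathfrak{sl}(3)$ Khovanov complex differs from the ordinary one only by the prescribed local change at each $\delta_i$, and $\Theta_i$ is chosen to reproduce that change in the $\mathfrak{sl}(3)$ foam calculus, (b) reduces to a short list of local identities between resolved diagrams, together with the chain-homotopy invariance under Reidemeister moves already established (so that the $\delta_i$ may be slid along edges). Statement (a) is the geometric input: it should follow from Kronheimer--Mrowka's comparison of the two web instanton theories in \cite{KM-jsharp}, the hypothesis that the meridians of the $\delta_i$ form a basis of $H_1(S^3\setminus\Gamma;\mathbb F)$ — together with connectedness of $\Gamma$ — entering precisely as the condition under which the auxiliary $w_2$-data carried by the $\Theta_i$ assembles globally into the class defining $I^\sharp$, the web analogue of placing one basepoint on each component of a link.

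Granting (a) and (b), the theorem is immediate: apply Theorem~\ref{s-sequence*} to $\Gamma^{\boldsymbol{\delta}}$ to obtain a spectral sequence from $\mathcal H(\Gamma^{\boldsymbol{\delta}};\mathbb F)$ to $J^\sharp(\Gamma^{\boldsymbol{\delta}})$, and transport its $E_2$-page and its abutment along (b) and (a); the only bookkeeping is the overall grading shift introduced by the $\Theta_i$ and by the two isomorphisms, which is harmless since only the relative bigrading is well defined. If one prefers to avoid the detour through $\Gamma^{\boldsymbol{\delta}}$, the same result can be obtained intrinsically by rerunning the proof of Theorem~\ref{s-sequence*}: build a filtered complex computing $I^\sharp(\Gamma)$ — using the excision and neck-stretching arguments as before, but with the $\delta_i$-data (twisted local systems, or extra generators at the marked points) incorporated into the cube of resolutions from the outset — so that its $E_1$-page is the pointed Khovanov cube and its $E_2$-page is $\mathcal H(\Gamma,\boldsymbol{\delta};\mathbb F)$. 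The two presentations carry the same content, and convergence is inherited either way.

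The main obstacle is (a): one must pin down exactly how $I^\sharp$ and $J^\sharp$ of a web differ and verify that adjoining the $\Theta_i$ upgrades $J^\sharp(\Gamma^{\boldsymbol{\delta}})$ to $I^\sharp(\Gamma)$ on the nose — with too few marked points part of $H_1(S^3\setminus\Gamma;\mathbb F)$ would remain uncovered, producing a proper summand or a different invariant, whereas the basis condition as stated should give an honest isomorphism, and this is the only place the hypotheses of the theorem are genuinely used. A secondary difficulty, on the Khovanov side, is pushing (b) through the entire cube: the local identity must be compatible with the Khovanov differential and must persist under every Reidemeister move capable of moving a $\delta_i$, and the grading-shift accounting must be kept consistent with the conventions of Theorem~\ref{s-sequence*}. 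Everything else — naturality of the spectral sequence, the form of the higher differentials, convergence — is inherited from Theorem~\ref{s-sequence*} once (a) and (b) are in hand, so no Floer-theoretic input beyond the $I^\sharp/J^\sharp$ comparison of \cite{KM-jsharp} is needed.
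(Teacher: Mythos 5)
Your primary route rests on two unproved assertions, and the crucial one, (a), is not something that \cite{KM-jsharp} provides. What Kronheimer--Mrowka actually prove (the paper quotes it as Proposition \ref{J-I}) is that $I^\sharp(\Gamma)$ is the homology of an \emph{iterated mapping cone}, at the chain level, of operators $U_{\{i\}}$ on the Floer complex computing $J^\sharp(\Gamma)$ --- not that $I^\sharp(\Gamma)\cong J^\sharp(\Gamma^{\boldsymbol{\delta}})$ for some web $\Gamma^{\boldsymbol{\delta}}$ obtained by a local modification at the $\delta_i$. You never construct the local gadget $\Theta_i$, and it is far from clear that one exists: on the Khovanov side it would have to realize, for every ambient web, the mapping cone of the dot operator $X_{\delta_i}$ (whose homology on a circle has rank $2$, a value not readily produced by inserting a closed local web into the $\mathfrak{sl}(3)$ foam calculus), and on the instanton side it would have to convert the change of gauge group ($\widetilde{\mathcal G}$ versus $\mathcal G$) into an honest web. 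Since both (a) and (b) are left as ``should follow'' statements, the argument as given does not establish the theorem.

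Your fallback sketch (``rerun the proof of Theorem \ref{s-sequence*} with the $\delta_i$-data incorporated'') is the correct strategy and is what the paper does, but the content of that strategy is precisely the part you do not supply. One cannot formally compose the spectral sequence of Theorem \ref{s-sequence*} with the cone description of Proposition \ref{J-I}: one needs a single chain-level object carrying both cube directions. The paper builds $\widetilde{\mathbf{C}}=\bigoplus_{S}\mathbf{C}_S$ with differential $\widetilde{\mathbf{F}}$ whose components $U_{S,vu}$ count points of the moduli spaces over the families of metrics $\breve G_{vu}$ cut down by the divisors $V(\delta_i)$, $i\in S$ (with a product-of-intersection-numbers formula); proving $\widetilde{\mathbf{F}}^2=0$ (Proposition \ref{CU-chain}) requires a boundary count in $1$-dimensional cut-down moduli spaces together with the exclusion of bubbling contributions, and identifying $H(\widetilde{\mathbf{C}})$ with $I^\sharp(\Gamma)$ (Proposition \ref{I-cube-iso}) requires iterating the unoriented-skein-triangle quasi-isomorphism while showing it intertwines, up to chain homotopy, with the cut-down operators. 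Only then does the filtration by $|v|_1+|S|$ produce the pointed Khovanov complex on the $E_1$-page. Neither ``twisted local systems'' nor ``extra generators at the marked points'' is what happens, and the new Floer-theoretic input (families of metrics interacting with the divisors $V(\delta_i)$, and the bubble analysis) is genuinely needed, contrary to your closing claim. Note also that connectedness and the basis hypothesis on the meridians enter only through Proposition \ref{J-I}, not through any global assembly of $w_2$-data on auxiliary webs.
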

Let $\Theta$ be the planar theta graph. As an application of the above spectral sequence, we prove the following detection result.
\begin{THE}\label{theta-detection}
Suppose $\Gamma$ is a  spatial theta graph and 
$\boldsymbol{\delta}=\{\delta_1,\delta_2\}$ are two mark points lying on two distinct
edge of $\Gamma$. Then the following are equivalent: 
\begin{enumerate}[label=(\alph*)]
  \item $\Gamma$ is the planar theta graph;
  \item $\mathcal{H}(\Gamma;\mathbb{F})$ and $\mathcal{H}(\Theta;\mathbb{F})$ are isomorphic as  $\mathcal{R}_\Theta$-modules;
  \item $\rank_\mathbb{F} \mathcal{H}(\Gamma,\boldsymbol{\delta};\mathbb{F})=4$;
  \item $J^\sharp(\Gamma)$ and $\mathcal{H}(\Theta;\mathbb{F})$ are isomorphic as $\mathcal{R}_\Theta$-modules;
  \item $\rank_\mathbb{F} I^\sharp(\Gamma)=4$.
\end{enumerate}
\end{THE}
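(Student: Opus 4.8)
\emph{Plan of proof.} The equivalences split into an easy half, the implications out of (a), and a hard half, the converses. If $\Gamma$ is the planar theta graph then it is ambient isotopic to $\Theta$, so (b) is immediate and (d) follows from the quoted identification $\mathcal{H}(\cdot\,;\mathbb{F})\cong J^\sharp(\cdot)$ for planar webs (as modules); thus (a)$\Rightarrow$(b),(d) are free. For (a)$\Rightarrow$(c) I would compute $\rank_\mathbb{F}\mathcal{H}(\Theta,\boldsymbol{\delta};\mathbb{F})=4$ directly: the standard diagram of $\Theta$ has no crossings, so $F(D)$ is concentrated in homological degree $0$ and $\mathcal{H}(\Theta,\boldsymbol{\delta};\mathbb{F})$ is just the pointed state module of the planar theta web, a finite computation from the $\mathfrak{sl}(3)$ foam relations; in parallel one records $\rank_\mathbb{F}\mathcal{H}(\Theta;\mathbb{F})=6$, consistent with the Kuperberg evaluation of the theta web and with the $6$ Tait colorings of $\Theta$. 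Finally (a)$\Rightarrow$(e) is the assertion $\rank_\mathbb{F} I^\sharp(\Theta)=4$, which I would obtain either from Kronheimer--Mrowka's computations or, as in the hard half below, from the sutured instanton homology of the genus-$2$ handlebody $S^3\setminus N(\Theta)$ with its canonical suture. Feeding $\rank=4$ into Theorem~\ref{I-ss*} for $\Gamma=\Theta$ then shows that spectral sequence degenerates at $E_2$, and the analogous count for Theorem~\ref{s-sequence*} records that the isomorphism $J^\sharp(\Theta)\cong\mathcal{H}(\Theta;\mathbb{F})$ is one of $\mathcal{R}_\Theta$-modules.

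For the converses I would set up two module-theoretic reductions. First, over $\mathbb{F}$ the pointed invariant $\mathcal{H}(\Gamma,\boldsymbol{\delta};\mathbb{F})$ is determined, up to $\mathbb{F}$-rank, by the $\mathcal{R}_\Gamma$-module $\mathcal{H}(\Gamma;\mathbb{F})$ together with the marked edges: one applies at each $\delta_i$ the edge operation built into the pointed construction of \cite{BLS} and checks the relevant freeness over the subalgebra of $\mathcal{R}_\Gamma$ attached to that edge (in the $\mathfrak{sl}(2)$ case this is the familiar fact that over $\mathbb{F}$ the reduced theory is recovered from the unreduced one by the action of the dot). Since $\mathcal{R}_\Gamma\cong\mathcal{R}_\Theta$ for every spatial theta graph and the three edges are interchanged by an automorphism, an $\mathcal{R}_\Theta$-module isomorphism $\mathcal{H}(\Gamma;\mathbb{F})\cong\mathcal{H}(\Theta;\mathbb{F})$ yields $\rank_\mathbb{F}\mathcal{H}(\Gamma,\boldsymbol{\delta};\mathbb{F})=\rank_\mathbb{F}\mathcal{H}(\Theta,\boldsymbol{\delta};\mathbb{F})=4$, i.e. (b)$\Rightarrow$(c). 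Second, the analogous statement on the instanton side, that $I^\sharp(\Gamma)$ is obtained from $J^\sharp(\Gamma)$ up to the same edge operations (as in \cite{KM-jsharp}), shows $\rank_\mathbb{F} I^\sharp(\Gamma)$ depends only on $J^\sharp(\Gamma)$ as an $\mathcal{R}_\Gamma$-module; so an isomorphism $J^\sharp(\Gamma)\cong\mathcal{H}(\Theta;\mathbb{F})\cong J^\sharp(\Theta)$ gives $\rank_\mathbb{F} I^\sharp(\Gamma)=\rank_\mathbb{F} I^\sharp(\Theta)=4$, i.e. (d)$\Rightarrow$(e). Together with the inequality $\rank_\mathbb{F} I^\sharp(\Gamma)\le\rank_\mathbb{F}\mathcal{H}(\Gamma,\boldsymbol{\delta};\mathbb{F})$ furnished by Theorem~\ref{I-ss*}, condition (c) forces $\rank_\mathbb{F} I^\sharp(\Gamma)\le 4$.

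It then suffices to prove that $\rank_\mathbb{F} I^\sharp(\Gamma)\le 4$ already implies $\Gamma$ is planar; this closes the cycle and forces equality throughout. The plan is to pass to the exterior $Y_\Gamma=S^3\setminus N(\Gamma)$, a compact manifold with a single genus-$2$ boundary component carrying a suture $\gamma_\Gamma$ read off from the edges of $\Gamma$ and the marked points, and to identify $I^\sharp(\Gamma)$ — again up to the edge operations above — with the sutured instanton homology of $(Y_\Gamma,\gamma_\Gamma)$, so that $\rank_\mathbb{F} I^\sharp(\Gamma)\le 4$ forces that sutured invariant to attain the minimal rank possible for a sutured manifold with this boundary data. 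By the sutured-instanton detection of handlebodies, $(Y_\Gamma,\gamma_\Gamma)$ must then be a sutured genus-$2$ handlebody; and a spatial theta graph has handlebody exterior precisely when it is planar — which correctly excludes the non-planar theta curves, such as Kinoshita's, whose exteriors are not handlebodies.

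I expect this last step to be the main obstacle: pinning down the sutured manifold $(Y_\Gamma,\gamma_\Gamma)$ that computes $I^\sharp(\Gamma)$, verifying that its minimal sutured-instanton rank equals $4$ and is attained only by the genus-$2$ handlebody, and proving or locating the equivalence ``handlebody exterior $\Leftrightarrow$ planar'' for theta graphs. A secondary subtlety is that the module structure is genuinely needed in (b) and (d): the $\mathbb{F}$-rank of $\mathcal{H}(\Gamma;\mathbb{F})$ alone cannot detect planarity — distinct spatial theta graphs can share the same total rank — so the reductions above must track the action of $\mathcal{R}_\Gamma$ edge by edge.
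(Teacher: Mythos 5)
Your overall logical skeleton (reduce (b),(c),(d),(e) to the single inequality $\rank_\mathbb{F} I^\sharp(\Gamma)\le 4$ via the Koszul/spectral-sequence comparisons, then show this inequality forces planarity and close the cycle) is the same as the paper's. But the step you yourself flag as the main obstacle is where the proposal genuinely breaks: the claimed equivalence ``handlebody exterior $\Leftrightarrow$ planar'' for theta graphs is false. If $K$ is a knotted tunnel-number-one knot (e.g.\ a trefoil) and $t$ is an unknotting tunnel, then $K\cup t$ is a spatial theta graph whose exterior is by construction a genus-$2$ handlebody, yet it is not planar since the constituent knot $K$ is knotted. (Planarity of a spatial graph requires freeness of the complement groups of \emph{all} subgraphs, not just of the whole graph; Kinoshita's curve is excluded by your criterion only because all its constituents are trivial.) So even granting an identification of $I^\sharp(\Gamma)$ with a sutured instanton invariant of the exterior and a minimal-rank handlebody-detection statement -- neither of which is available in the literature for Kronheimer--Mrowka's orbifold $I^\sharp$ of webs -- the final implication ``$\rank_\mathbb{F} I^\sharp(\Gamma)\le 4\Rightarrow\Gamma$ planar'' would not follow from your route. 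The paper proceeds differently: it cuts along two spheres around the vertices and uses excision (Proposition \ref{L=2gamma}) to relate $I^\sharp(\Gamma)$ to $I(S^1\times S^2,L)$ for the $3$-component closure $L$ of the associated $3$-stranded tangle $T$, computes $I^\sharp(\Theta)\cong\mathbb{F}^4$ (Proposition \ref{I-theta}, via a further excision to a pair of Hopf links and Fukaya's connected-sum theorem), deduces $\rank_\mathbb{F} I(S^1\times S^2,L)\le 2$, and then invokes the eigenspace decomposition of the $\mu^{orb}(S)$ operator together with the braid-detection theorem for $\THI^{\text{odd}}$ from \cite{AHI} (plus odd-dimensionality) to conclude $T$ is a braid; type V moves then trivialize the braid, giving planarity (Corollary \ref{I-detection-theta}).

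A secondary point: your intermediate claims that $\mathcal{H}(\Gamma,\boldsymbol{\delta};\mathbb{F})$ is \emph{determined} up to rank by the $\mathcal{R}_\Gamma$-module $\mathcal{H}(\Gamma;\mathbb{F})$, and that $\rank_\mathbb{F} I^\sharp(\Gamma)$ is determined by $J^\sharp(\Gamma)$ as a module, are too strong: what the constructions give are spectral sequences whose $E_1$-pages are Koszul complexes (Proposition \ref{koszul} and Corollary \ref{koszul-J-I}), hence only the upper bounds $\rank_\mathbb{F}\mathcal{H}(\Gamma,\boldsymbol{\delta};\mathbb{F})\le 4$ and $\rank_\mathbb{F} I^\sharp(\Gamma)\le 4$ under hypotheses (b) and (d) respectively (Propositions \ref{H<4} and \ref{I<4}); higher differentials could a priori be nonzero for nonplanar $\Gamma$. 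This is harmless for the cycle of implications, which only needs the inequalities, but as stated those two implications are not justified. The genuine missing ingredient remains the detection step discussed above.
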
 

In this article, all the Khovanov homologies are defined over $\mathbb{Z}$ unless otherwise specified. 
All the instanton Floer homologies are defined over $\mathbb{F}$($=\mathbb{Z}/2$) unless otherwise specified.

\emph{Acknowledgments.} The author learned the spectral sequence in Theorem \ref{s-sequence*} from Peter Kronheimer and
wishes to thank him for his generosity in sharing ideas.

\section{ $\mathfrak{sl}(3)$ Khovanov module for spatial webs}\label{sl3Kh}
In this section, we first review Khovanov's definition of $\mathfrak{sl}(3)$ link homology \cite{Kh-sl3}, which is a 
bi-graded abelian group.
Then we will show that the $\mathfrak{sl}(3)$ homology can be generalized for spatial bipartite trivalent graphs.

\subsection{  $\mathfrak{sl}(3)$ homology for links}
All the contents in this subsection are from \cite{Kh-sl3}.
\begin{DEF}
A (closed) \emph{pre-foam} consists of
\begin{itemize}
  \item  A compact 2-dimensional CW-complex $\Sigma$ such that any point on it has a neighborhood which is homeomorphic to either a 2-dimensional disk or the product of
        letter $Y$ and an interval. The points with neighborhood ``Y''$\times I$ form a collection of circles called \emph{seams}. The complement of
        the seams in $\Sigma$ is a 2-manifold whose connected components are \emph{facets} of the foam. We require the facets are orientable.
  \item For each seam $C$, a cyclic order is chosen for the three facets whose closure includes $C$.
  \item Each facet is decorated with  a number of ``dots'' (possibly empty).
\end{itemize}
A \emph{pre-foam} embedded in $\mathbb{R}^3$ is called a \emph{foam}.
\end{DEF}

It is shown in \cite{Kh-sl3} that
\begin{PR}\label{4axiom}
There is a unique map $F:\{\text{closed pre-foams}\}\to \mathbb{Z}$ characterized by the following four axioms
\begin{enumerate}
  \item If $\Sigma$ is a closed orientable surface decorated with dots, then $F(\Sigma)=0$ unless
      \begin{itemize}
        \item $\Sigma$ is a 2-sphere with two dots, then $F(\Sigma)=-1$.
        \item $\Sigma$ is a torus without any dots, then $F(\Sigma)=3$.
      \end{itemize}
  \item If $\Sigma_1$ and $\Sigma_2$ are pre-foams, then $F(\Sigma_1\sqcup \Sigma_2)=F(\Sigma_1)F(\Sigma_2)$.
  \item Do a surgery along a circle inside a facet of $\Sigma$ to obtain three pre-foams $\Sigma_1, \Sigma_2, \Sigma_3$ as in Figure \ref{Sigma123}. Then
        \begin{equation*}
          -F(\Sigma)=F(\Sigma_1)+F(\Sigma_2)+F(\Sigma_3)
        \end{equation*}
  \item Let $\Theta(k_1,k_2,k_3)$ be the theta foam with $k_i$ dots on the $i$-th facet (see Figure \ref{theta-foam}), then
        \begin{equation*}
          F(\Theta(k_1,k_2,k_3))=\left\{
                                   \begin{array}{ll}
                                     1, & \hbox{if $(k_1,k_2,k_3)=(0,1,2)$, up to cylic permutation;} \\
                                     -1, & \hbox{if $(k_1,k_2,k_3)=(0,2,1)$, up to cylic permutation;} \\
                                     0, & \hbox{otherwise.}
                                   \end{array}
                                 \right.
        \end{equation*}
\end{enumerate}
\end{PR}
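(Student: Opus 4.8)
The plan is to establish existence and uniqueness separately, with uniqueness the more routine half and existence the one requiring real work.

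\emph{Uniqueness.} Suppose $F$ and $F'$ both satisfy axioms (1)--(4). It suffices to exhibit a procedure that, using only these axioms, reduces the evaluation of an arbitrary closed pre-foam to a $\mathbb{Z}$-linear combination of evaluations of a fixed finite list of ``elementary'' closed pre-foams: disjoint unions of dotted $2$-spheres, dotted tori, and dotted theta-foams $\Theta(k_1,k_2,k_3)$, whose values are pinned down by axioms (1), (2), (4). First I would record the relations that follow formally from the axioms. Axiom (3) is a \emph{neck-cutting} relation: surgery on a trivially embedded circle in a facet rewrites the foam as a sum of three foams obtained by cutting the neck and inserting $(0,2)$, $(1,1)$, and $(2,0)$ dots on the two resulting discs. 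Applied to a non-separating curve on a positive-genus facet this strictly lowers the genus; applied to a curve parallel to a seam it lets one ``slide'' a facet off a seam; and combined with axiom (4) it yields the dot-migration relations near a seam (the elementary symmetric functions of the three adjacent ``dot variables'' vanish, so in particular three dots on any facet can be eliminated) together with a relation cutting a tube that joins two facets. Using these moves, any closed pre-foam is rewritten in finitely many steps as a linear combination of the elementary foams above; termination follows from a complexity function built from total genus, number of seam circles, and number of dots. Since every rewriting step is an \emph{equality} forced on any map satisfying the axioms, $F$ is completely determined, hence $F=F'$.

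\emph{Existence.} Here I would produce $F$ by an explicit construction and then check it satisfies the axioms. Fix a generic linear projection of $\Sigma\subset\mathbb{R}^3$ onto a coordinate axis; its level sets are planar trivalent graphs (disjoint circles and theta-like pieces), and $\Sigma$ decomposes into elementary cobordisms: births and deaths of circles, saddles, and the two ``zip/unzip'' moves that create or cancel a pair of seam points. Assign to a circle the Frobenius algebra $A=\mathbb{Z}[X]/(X^3)$ with counit $\epsilon(1)=\epsilon(X)=0$, $\epsilon(X^2)=-1$ and the induced comultiplication, assign to a dot multiplication by $X$, and assign to the zip/unzip moves the trilinear form on $A$ determined by the prescribed theta-foam values; then set $F(\Sigma)\in\Homm_{\mathbb{Z}}(\mathbb{Z},\mathbb{Z})=\mathbb{Z}$ to be the resulting composite. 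One must verify that this integer is independent of the chosen projection, which reduces to checking a finite list of local ``movie''/Cerf moves for foams: reordering distant elementary pieces, cancelling a birth against a saddle, and the moves describing how saddles and dots pass a seam. Finally, axioms (1) and (2) are immediate from the Frobenius structure and monoidality, axiom (3) is the identity ``comultiply then multiply $=$ multiply by the handle element'' in $A$, and axiom (4) holds by the very definition of the zip/unzip maps.

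\emph{Main obstacle.} The substantive difficulty lies in the invariance check of the existence half: there are many local moves to verify, and those involving seams are delicate, requiring careful bookkeeping of the signs forced by axioms (1) and (4). Alternatively, one may simply invoke the construction and functoriality of the $\mathfrak{sl}(3)$ foam evaluation established in \cite{Kh-sl3}, in which case all that remains is the reduction argument of the uniqueness half.
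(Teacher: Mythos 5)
First, a point of comparison: the paper does not prove Proposition \ref{4axiom} at all --- the statement is quoted from \cite{Kh-sl3} (``It is shown in \cite{Kh-sl3} that\dots''), so the only ``proof'' in the paper is a citation. Your closing alternative (invoke the foam evaluation constructed in \cite{Kh-sl3}) therefore coincides with what the paper actually does, and your uniqueness half is a correct supplement to that: cutting a neck parallel to a given seam on each of the three adjacent sheets isolates that seam into a dotted theta-foam and leaves pre-foams with fewer seams, so iterating and using axioms (2)--(4) together with (1) (dotted spheres, tori, and higher-genus surfaces handled by further neck-cutting) forces the value of $F$ on every closed pre-foam, with termination controlled by a complexity function as you indicate. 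That half is sound.

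The gap is in the existence half if it is meant as an independent argument. Closed \emph{pre-foams} are abstract CW-complexes; only \emph{foams} are embedded in $\mathbb{R}^3$, so ``a generic linear projection of $\Sigma\subset\mathbb{R}^3$'' is not available and the level sets need not be planar. More seriously, even after replacing the projection by a Morse function on the abstract $2$-complex, the TQFT data you specify do not suffice to define the advertised composite: for any pre-foam with seams, the intermediate level sets are webs containing trivalent vertices, and you have only assigned an abelian group to circles ($A=\mathbb{Z}[X]/(X^3)$) and maps to births, deaths, saddles, dots and zips --- nothing is assigned to a web with vertices, so ``the resulting composite'' in $\Homm_{\mathbb{Z}}(\mathbb{Z},\mathbb{Z})$ is simply not defined. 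Supplying those groups and proving independence of the slicing (including the sign bookkeeping forced by the cyclic orderings at seams and the antisymmetry in axiom (4), which you correctly flag as delicate but do not resolve) is exactly the content of the construction in \cite{Kh-sl3}, and one must take care not to be circular, since Khovanov's state spaces for webs are built \emph{from} the closed evaluation via the universal construction. So as a self-contained proof your proposal establishes uniqueness but not existence; as a proof by citation it matches the paper.
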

\begin{figure}
\begin{tikzpicture}
\tikzset{
    partial ellipse/.style args={#1:#2:#3}{
        insert path={+ (#1:#3) arc (#1:#2:#3)}
    }
}

\draw[thick]  (0,1) ellipse (1cm and 0.3cm);

\draw[red]  (0,0) [partial ellipse=-180:0: 1cm and 0.3cm];
\draw[dashed,red]  (0,0) [partial ellipse=0:180: 1cm and 0.3cm];

\draw[thick]  (0,-1) [partial ellipse=-180:0: 1cm and 0.3cm];
\draw[thick,dashed]  (0,-1) [partial ellipse=0:180: 1cm and 0.3cm];

\draw[thick] (-1,-1) to (-1,1);   \draw[thick] (1,-1) to (1,1);
\node at (0,-1.8) {$\Sigma$};

\draw[thick]  (3,1) ellipse (1cm and 0.3cm);
\draw[thick]  (3,-1) [partial ellipse=-180:0: 1cm and 0.3cm]; \draw[thick]  (3,-2) [partial ellipse=35:145: 1.22cm and 1.8cm];
\draw[thick,dashed]  (3,-1) [partial ellipse=0:180: 1cm and 0.3cm]; \draw[thick]  (3,2) [partial ellipse=-35:-145: 1.22cm and 1.8cm];
\draw[fill] (3.2,0.5) circle (2pt); \draw[fill] (2.8,0.5) circle (2pt);
\node at (3,-1.8) {$\Sigma_1$};

\draw[thick]  (6,1) ellipse (1cm and 0.3cm);
\draw[thick]  (6,-1) [partial ellipse=-180:0: 1cm and 0.3cm]; \draw[thick]  (6,-2) [partial ellipse=35:145: 1.22cm and 1.8cm];
\draw[thick,dashed]  (6,-1) [partial ellipse=0:180: 1cm and 0.3cm]; \draw[thick]  (6,2) [partial ellipse=-35:-145: 1.22cm and 1.8cm];
\draw[fill] (6,0.5) circle (2pt); \draw[fill] (6,-0.5) circle (2pt);
\node at (6,-1.8) {$\Sigma_2$};

\draw[thick]  (9,1) ellipse (1cm and 0.3cm);
\draw[thick]  (9,-1) [partial ellipse=-180:0: 1cm and 0.3cm]; \draw[thick]  (9,-2) [partial ellipse=35:145: 1.22cm and 1.8cm];
\draw[thick,dashed]  (9,-1) [partial ellipse=0:180: 1cm and 0.3cm]; \draw[thick]  (9,2) [partial ellipse=-35:-145: 1.22cm and 1.8cm];
\draw[fill] (9.2,-0.5) circle (2pt); \draw[fill] (8.8,-0.5) circle (2pt);
\node at (9,-1.8) {$\Sigma_3$};

\end{tikzpicture}
\caption{Do surgery along the red circle in $\Sigma$, we obtain foams $\Sigma_1,\Sigma_2.\Sigma_3$.}\label{Sigma123}
\end{figure}
\begin{figure}
\begin{tikzpicture}
\tikzset{
    partial ellipse/.style args={#1:#2:#3}{
        insert path={+ (#1:#3) arc (#1:#2:#3)}
    }
}

\draw[thick] (0,0) ellipse (2cm and 2.2cm);
\draw[ultra thick] (0,0) [partial ellipse=-180:0: 2cm and 1cm];
\draw[ultra thick,dashed, fill=gray!30,opacity=0.5] (0,0) [partial ellipse=-180:180: 2cm and 1cm];

\node at (-1,0) {1}; \node at (-1,1.5) {2}; \node at (-1,-1.5) {3};
\end{tikzpicture}
\caption{The theta-foam consists of three disk bounded by the same circle. The three disks are marked by 1, 2 and 3.}\label{theta-foam}
\end{figure}

\begin{DEF}
An oriented (closed) foam is a foam  with all the facets oriented such that
 for any seam $C$, the induced orientations from the three nearby facets coincide and cyclic ordering of the three nearby facets are determined by the
 orientation of $C$ and the left-hand-rule.
\end{DEF}

We also want to define oriented foams with boundary. Before that we define the following.
\begin{DEF}\label{web}
A \emph{web} is a bipartite trivalent  
graph (possibly including loops without vertices). An oriented web is a directed bipartite trivalent  graph such that
at each vertex all the edges are either all incoming or all outgoing.
\end{DEF}

\begin{DEF}
An oriented foam with boundary is the intersection of $\mathbb{R}^2\times [0,1]$ and a closed foam $\Sigma$ such that
$\mathbb{R}^2\times\{0\}$ and $\mathbb{R}^2\times\{1\}$ are transversal to the facets and seams of $\Sigma$. In particular,
$\Gamma_1=\mathbb{R}^2\times\{0\}\cap \Sigma$ and $\Gamma_2=\mathbb{R}^2\times\{1\}\cap \Sigma$ are two oriented planar webs. The oriented foam with boundary
can be thought as a cobordism from $\Gamma_1$ to $\Gamma_2$.
\end{DEF}
By composing with the forgetful map from oriented closed foams to pre-foams, we can define
$F:\{\text{oriented closed foams}\}\to \mathbb{Z}$. Here we still denote it by $F$ by abuse of notation.  Next we want to extend $F$ into
a (1+1)-dimensional TQFT on oriented planar webs and foams. Given two oriented planar webs $\Gamma_1$ and $\Gamma_2$, let $\Homm_{\text{OF}} (\Gamma_1,\Gamma_2)$
be the set of all cobordisms (oriented foams) from $\Gamma_1$ to $\Gamma_2$. Define
\begin{equation*}
  F(\Gamma):=\mathbb{Z}\Homm_{\text{OF}} (\emptyset,\Gamma)/\{\sum_i a_i\Sigma_i|%\Sigma_i\in \Homm_{\text{OF}} (\Gamma_1,\Gamma_2), a_i\in \mathbb{Z},
   \sum_i a_i F(\Phi \circ \Sigma_i)=0~\text{for all}~\Phi \in \Homm_{\text{OF}} (\Gamma,\emptyset)  \}
\end{equation*}
for an oriented planar web $\Gamma$. $F(\Gamma)$ can be equipped with a $\mathbb{Z}$-grading by defining
\begin{equation*}
  \deg (\Sigma):= \chi (\partial \Sigma)-2\chi(\Sigma\setminus \text{Nbh}(\text{dots}))
\end{equation*}
for $\Sigma\in \Homm_{\text{OF}} (\emptyset,\Gamma)$ (see \cite{Kh-sl3}*{Section 3.3} for more details).
It is clear from the definition that given $\Phi\in \Homm_{\text{OF}} (\Gamma_1,\Gamma_2)$, there is a well-defined homomorphism
$F(\Phi):F(\Gamma_1)\to F(\Gamma_2)$ of degree $\deg \Phi$.

Let $L$ be an oriented link in $\mathbb{R}^3$ and $D$ be a diagram of $L$ with $n$ crossings. A crossing is called a positive or negative crossing
according to Figures \ref{+01} and \ref{-01}. \footnote{The convention used here is non-standard. We use this non-standard convention in order to be consistent with \cite{Kh-sl3}.}
\begin{figure}
\begin{tikzpicture}
\tikzset{middlearrow/.style={
        decoration={markings,
            mark= at position 0.5 with {\arrow{#1}} ,
        },
        postaction={decorate}
    }
}

\draw[thick,-latex] (1,-1) to (-1,1); \draw[thick,-latex,dash pattern=on 1.3cm off 0.25cm] (-1,-1) to (1,1);  \node[below] at (0,-1.2) {Positive crossing};

\draw[thick,-latex] (2,-1)  to [out=45,in=270]  (2.7,0) to [out=90,in=315]    (2,1);  \node[below] at (3,-1.2) {0-resolution};
\draw[thick,-latex] (4,-1)  to [out=135,in=270]  (3.3,0) to [out=90,in=225]   (4,1);

\draw[thick,-latex] (5,-1) to (6,-0.4);  \draw[thick,middlearrow={latex}] (6,0.4) to (6,-0.4);
\draw[thick,-latex] (7,-1) to (6,-0.4);
\draw[thick,-latex] (6,0.4) to (5,1);   \draw[thick,-latex] (6,0.4) to (7,1);
\node[below] at (6,-1.2) {1-resolution};
\end{tikzpicture}
\captionof{figure}{Positive crossing and two types of resolutions}\label{+01}
\vspace{0.5cm}

\begin{tikzpicture}
\tikzset{middlearrow/.style={
        decoration={markings,
            mark= at position 0.5 with {\arrow{#1}} ,
        },
        postaction={decorate}
    }
}

\draw[thick,-latex, dash pattern=on 1.3cm off 0.25cm] (1,-1) to (-1,1); \draw[thick,-latex] (-1,-1) to (1,1);  \node[below] at (0,-1.2) {Negative crossing};

\draw[thick,-latex] (2,-1) to (3,-0.4);  \draw[thick,middlearrow={latex}] (3,0.4) to (3,-0.4);
\draw[thick,-latex] (4,-1) to (3,-0.4);
\draw[thick,-latex] (3,0.4) to (2,1);   \draw[thick,-latex] (3,0.4) to (4,1);
\node[below] at (3,-1.2) {0-resolution};

\draw[thick,-latex] (5,-1)  to [out=45,in=270]  (5.7,0) to [out=90,in=315]    (5,1);  \node[below] at (6,-1.2) {1-resolution};
\draw[thick,-latex] (7,-1)  to [out=135,in=270]  (6.3,0) to [out=90,in=225]   (7,1);
\end{tikzpicture}
\captionof{figure}{Negative crossing and two types of resolutions}\label{-01}
\end{figure}
 \begin{figure}
\begin{tikzpicture}
\tikzset{middlearrow/.style={
        decoration={markings,
            mark= at position 0.5 with {\arrow{#1}} ,
        },
        postaction={decorate}
    }
}
\tikzset{
    partial ellipse/.style args={#1:#2:#3}{
        insert path={+ (#1:#3) arc (#1:#2:#3)}
    }
}

\draw[thick,middlearrow={latex}] (1,0) to (0,0);
\draw[thick,-latex] (-1.2,-0.7) to (0,0);
\draw[thick, dashed,-latex] (-0.8,0.5) to (0,0); \draw[dashed] (-0.8,0.5) to (-0.8,2.1); \draw (-0.8,2.1) to (-0.8, 3);
\draw (-1.2,-0.7) to (-1.2,2);
\draw[thick,-latex] (1,0) to (2,-0.7);
\draw (2,-0.7) to (2,2);   \draw[thick,-latex] (-1.2,2) to [out=17,in=163] (2,2);
\draw[thick, dashed] (1,0) to  (2,2.5/7); \draw[thick,-latex] (2,2.5/7) to    (2.4,0.5);
\draw (2.4,0.5) to (2.4,3); \draw[thick,-latex] (-0.8, 3) to [out=-15, in=195]   (2.4,3);

\draw[ultra thick] (0.5,0) [partial ellipse=0:180: 0.5cm and 1cm];
\draw[fill=gray!30, opacity=0.5] (0.5,0.02) [partial ellipse=0:180: 0.47cm and 0.97cm];

\draw[thick] (0.5,2.8) [partial ellipse=197:270: 0.3cm and 1.8cm];
\draw[thick, dashed] (0.5,2.8) [partial ellipse=270:358: 0.3cm and 1.8cm];
\end{tikzpicture}
\caption{The skein cobordism between two resolutions. The bold arc is the seam.}\label{skein-co}
\end{figure}

For each crossing, there are two ways to resolve the crossing: 0-resolution and 1-resolution, see Figures \ref{+01} and \ref{-01}. Given any $v\in \{0,1\}^n$, we can resolve all the
crossings by 0- or 1-resolution determined by $v$ and obtain an oriented planar web $D_v$.
If $v,u\in \{0,1\}^n$ only differ at one crossing ($v$ assigns $0$ and $u$ assigns $1$ to this crossing), then there is a cobordism
$S_{vu}$ from $D_v$ to $D_u$ which
is a product cobordism away from the crossing and is the cobordism in Figure \ref{skein-co} near the crossing.

Let $p_+$ and $p_-$ be the numbers of positive and negative crossings in $D$ respectively. We use $F(\Gamma)\{l\}$ to denote the graded abelian group
obtained by increasing the (quantum) grading of $F(\Gamma)$ by $l$.
For each vertex $v$ of the cube $\{0,1\}^n$, we assign a graded abelian group $F(D_v)\{3p_- - 2p_+ -|v|_1\}$ where $|v|_1$ is the number of 1's in $v$.
For each edge $vu$ of the cube $\{0,1\}^n$, we assign
the map $F(S_{vu}):F(D_v)\{3p_- - 2p_+ -|v|_1\}\to F(D_u)\{3p_- - 2p_+ -|u|_1\}$. This map preserves the grading. After adding plus or minus sign to those maps
associated with the edges appropriately, each square in the cube anti-commutes. The total complex $F(D)$ of the cube becomes a chain complex of graded abelian groups
whose homological degree $i$ term is
\begin{equation}\label{cube}
  F(D)_i:=\bigoplus_{|v|_1=i+p_-} F(D_v)\{3p_- - 2p_+ -|v|_1\}
\end{equation}
Alternatively, we can define $F(D)$ inductively by Figure \ref{ind-cube}.
The homology of $F(D)$ is a bi-graded abelian group: one grading is the \emph{homological grading} and
the other grading (we call it the \emph{quantum grading}) is obtained from the grading of $F(D_v)\{3p_- - 2p_+ -|v|_1\}$.

\begin{figure}
\begin{minipage}{1\textwidth}
\begin{tikzpicture}
\tikzset{middlearrow/.style={
        decoration={markings,
            mark= at position 0.5 with {\arrow{#1}} ,
        },
        postaction={decorate}
    }
}

\draw[thick,-latex] (1,-1) to (-1,1); \draw[thick,-latex,dash pattern=on 1.3cm off 0.25cm] (-1,-1) to (1,1);
\draw[ultra thick]  (1.2,0.1) to (1.6,0.1); \draw[ultra thick]  (1.2,-0.1) to (1.6,-0.1);

\node at (2.1,0) {$\textbf{Tot}$};

\draw (2.5,-1) to (2.5,1); \draw (2.5,1) to (2.7,1);\draw (2.5,-1) to (2.7,-1);

\draw[thick,-latex] (3,-1)  to [out=45,in=270]  (3.7,0) to [out=90,in=315]    (3,1);
\draw[thick,-latex] (5,-1)  to [out=135,in=270]  (4.3,0) to [out=90,in=225]   (5,1);
\node at (5.5,0) {$\{-2\}$};  \node at (4,-1.2) {0}; \node at (8.25,-1.2) {1};

\draw[->] (6,0) to (7,0);

\draw[thick,-latex] (7.2,-1) to (8.2,-0.4);  \draw[thick,middlearrow={latex}] (8.2,0.4) to (8.2,-0.4);
\draw[thick,-latex] (9.2,-1) to (8.2,-0.4);
\draw[thick,-latex] (8.2,0.4) to (7.2,1);   \draw[thick,-latex] (8.2,0.4) to (9.2,1);
\node at (9.7,0) {$\{-3\}$ };

\draw (10.4,-1) to (10.4,1); \draw (10.4,1) to (10.2,1);  \draw (10.4,-1) to (10.2,-1);
\node at (0,-2) {};
\end{tikzpicture}
\end{minipage}

\begin{minipage}{1\textwidth}
\begin{tikzpicture}
\tikzset{middlearrow/.style={
        decoration={markings,
            mark= at position 0.5 with {\arrow{#1}} ,
        },
        postaction={decorate}
    }
}

\draw[thick,-latex, dash pattern=on 1.3cm off 0.25cm] (1,-1) to (-1,1); \draw[thick,-latex] (-1,-1) to (1,1);
\draw[ultra thick]  (1.2,0.1) to (1.6,0.1); \draw[ultra thick]  (1.2,-0.1) to (1.6,-0.1);

\node at (2.1,0) {$\textbf{Tot}$};

\draw (2.5,-1) to (2.5,1); \draw (2.5,1) to (2.7,1);\draw (2.5,-1) to (2.7,-1);

\draw[thick,-latex] (3,-1) to (4,-0.4);  \draw[thick,middlearrow={latex}] (4,0.4) to (4,-0.4);
\draw[thick,-latex] (5,-1) to (4,-0.4);
\draw[thick,-latex] (4,0.4) to (3,1);   \draw[thick,-latex] (4,0.4) to (5,1);

\node at (5.5,0) {$\{3\}$};  \node at (4,-1.2) {$-1$}; \node at (8.25,-1.2) {0};

\draw[->] (6,0) to (7,0);

\draw[thick,-latex] (7.2,-1)  to [out=45,in=270]  (7.9,0) to [out=90,in=315]    (7.2,1);
\draw[thick,-latex] (9.2,-1)  to [out=135,in=270]  (8.5,0) to [out=90,in=225]   (9.2,1);

\node at (9.7,0) {$\{2\}$ };

\draw (10.4,-1) to (10.4,1); \draw (10.4,1) to (10.2,1);  \draw (10.4,-1) to (10.2,-1);

\end{tikzpicture}
\end{minipage}
\caption{\textbf{Tot} denote the total complexes of the double complexes on the right. $-1,0,1$ under the webs
 means the horizontal (homological) degree of the double complexes.}\label{ind-cube}
\end{figure}

The diagram $D$ of the link $L$ is not unique. But any two diagrams of $L$ can be connected by a sequence of three types 
of Reidemeister moves.
\begin{THE}[Khovanov]
Given two diagrams $D$ and $D'$ of the link $L$, the two chain complexes $F(D)$ and $F(D')$ are chain homotopy equivalent. In particular, the
 bi-graded abelian group
$\mathcal{H}(L):=H(F(D))$ is a well-defined invariant of $L$.
\end{THE}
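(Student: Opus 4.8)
The plan is to reduce the statement to local invariance under the three Reidemeister moves R1, R2, R3, and to verify each by exhibiting an explicit chain homotopy equivalence built out of the foam TQFT $F$ and its evaluations in Proposition \ref{4axiom}. First I would observe that if $D$ and $D'$ differ by a single Reidemeister move inside a disk $B$, then outside $B$ the cubes of resolutions agree, and all the cobordism maps $F(S_{vu})$ are product cobordisms away from $B$; hence the whole complex $F(D)$ is obtained from the ``local'' complex on the tangle inside $B$ by a fixed tensoring/gluing operation (this is exactly the inductive mapping-cone description of Figure \ref{ind-cube}). Since $F$ is a functor on oriented foams, a chain homotopy equivalence between the local complexes glues to one between $F(D)$ and $F(D')$. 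The grading shifts $3p_- - 2p_+ - |v|_1$ appearing in \eqref{cube} are designed precisely so that the equivalence is grading-preserving after the move; this bookkeeping is routine once the local equivalences are in place.

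Next I would establish the two algebraic ingredients that drive all three moves, both consequences of the four axioms in Proposition \ref{4axiom}: (i) a \emph{circle/sphere relation}, namely that for a planar web $\Gamma$ with a disjoint circle $\mathcal{O}$ one has $F(\Gamma \sqcup \mathcal{O}) \cong F(\Gamma)\otimes F(\mathcal{O})$ with $F(\mathcal{O})$ free of rank $3$ (spanned by the empty, one-dot and two-dot cups, with the two-dot sphere evaluating to $-1$); and (ii) a \emph{digon/bigon removal relation}: if $\Gamma'$ is obtained from $\Gamma$ by replacing a bigon face by a single edge, then $F(\Gamma)\cong F(\Gamma')\{1\}\oplus F(\Gamma')\{-1\}$, with the splitting maps given by the obvious zip/unzip foams decorated with $0$ or $1$ dot; the fact that these maps are mutually inverse up to the identity is checked by neck-cutting (axiom 3) and the theta-foam evaluation (axiom 4). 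With (i) in hand, R1 is immediate: one of the two resolutions of the kink crossing is $D$ together with a small circle, the differential to the other resolution is a zip cobordism, and Gaussian elimination against the dotted part of $F(\mathcal{O})$ collapses the mapping cone onto $F(D)$ with the correct shift. With (ii) in hand, R2 follows: the $2\times 2$ cube has two corners isotopic to the R2-simplified diagram, one corner carrying a disjoint circle and one carrying a bigon; applying (i) and (ii) and then cancelling the resulting acyclic subcomplexes via Gaussian elimination leaves exactly $F(D')$.

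For R3 I would use the now-established R2 invariance together with the ``categorified skein'' trick: resolve one distinguished crossing on each side of the move, so that each side becomes a mapping cone of a map between complexes that, after applying R2 to slide a strand, are visibly identical; then match the two cones. Equivalently, one expands the full $2\times2\times2$ cube, repeatedly applies digon removal (ii) and Gaussian elimination to contract it to a small complex, and checks that the small complexes for the two sides of R3 coincide up to chain homotopy. I expect this last step to be the main obstacle: the R3 reduction involves the largest cube, several nested applications of the bigon relation, and careful tracking of the signs that make each square of the cube anticommute, so the verification that the two reduced complexes are isomorphic (not merely abstractly of the same rank) is where the real work lies. All of this is carried out in \cite{Kh-sl3}; once the three local equivalences are verified, concatenating them along an arbitrary sequence of Reidemeister moves connecting $D$ to $D'$ shows $F(D)\simeq F(D')$, so $\mathcal{H}(L)=H(F(D))$ is a well-defined bi-graded invariant of $L$.
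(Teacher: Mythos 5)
Your overall architecture---reduce to locality of a single Reidemeister move, expand the cube of resolutions, use direct-sum decompositions of $F$ applied to webs, and contract by Gaussian elimination---is exactly the approach of the proof this theorem is quoted from: the paper itself does not reprove the statement but attributes it to \cite{Kh-sl3}, and your sketch reconstructs that argument. For R1 and R2 your two ingredients are the right ones: the circle computation ($F$ of a disjoint circle is free of rank $3$) and the digon decomposition, which appears in the paper as Lemma \ref{digon-homology}.

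There is, however, one genuine gap in your R3 step, and it is not just bookkeeping: you are missing the square (quadrilateral) decomposition, Lemma \ref{square-homology} (Khovanov's Proposition 9), which gives $F(\Gamma_{sq})\cong F(\Gamma_0)\oplus F(\Gamma_0')$. If you expand the full $2\times2\times2$ cube for R3, some resolutions contain a square face that digon removal alone cannot simplify, so ``repeatedly applying (ii) and Gaussian elimination'' does not contract the cube. Your alternative route---resolve one distinguished crossing and match the two mapping cones---also breaks down on the singular ($1$-)resolution: there the two sides of R3 differ by sliding a strand past a wide edge (a pair of trivalent vertices), which is not an R2 move of link diagrams; proving that local equivalence is essentially the type IV argument, and, as the paper's own proof of Proposition \ref{RIVD=D'} shows, it requires Lemma \ref{square-homology} in addition to Lemma \ref{digon-homology}. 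Once the square relation is added to your list of local decompositions, the reduction of both sides of R3 to homotopy equivalent small complexes goes through as in \cite{Kh-sl3}, and the rest of your argument (gluing the local equivalence into the global complex and checking the shifts $3p_--2p_+-|v|_1$) is fine.
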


\subsection{Spatial webs and Reidemeister moves}
A \emph{spatial web} is a web embedded in $\mathbb{R}^3$. In this subsection, we want to generalized Khovanov's link homology $\mathcal{H}(L)$ to
oriented spatial webs. Similar to links, we can also choose a diagram for a spatial web and require that the crossings are disjoint from
 the vertices.
The diagram for a spatial web is not unique. Any two diagrams for the same spatial web are connected by a sequence of \emph{five} types of Reidemeister moves
(see \cite{Kau-graph} or \cite{Kau-graph2} for more details).
In addition to the three types of Reidemeister moves in the link case, there are two new Reidemeister moves shown in 
Figures \ref{RIV} and \ref{RV}.
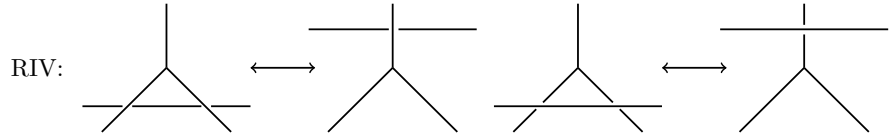
\begin{figure}
\scalebox{0.85}{
\begin{minipage}{0.6\textwidth}
\begin{tikzpicture}
\draw[thick] (-1,-1) to (0,0); \draw[thick] (1,-1) to (0,0); \draw[thick] (0,0)to (0,1);
\draw[thick, dash pattern=on 0.62cm off 0.15cm on 1.07cm off 0.15cm on 100cm] (-1.3,-0.6) to (1.3,-0.6);

\draw[thick,<->] (1.3,0) to (2.3,0);

\draw[thick] (2.5,-1) to (3.5,0); \draw[thick] (4.5,-1) to (3.5,0); \draw[thick] (3.5,0)to (3.5,1);
\draw[thick, dash pattern=on 1.24cm off 0.15cm  on 100cm] (2.2,0.6) to (4.8,0.6);

\node at (-2,0) {RIV:};
\end{tikzpicture}
\end{minipage}%
\begin{minipage}{0.4\textwidth}
\begin{tikzpicture}
\draw[thick, dash pattern=on 0.50cm off 0.15cm on 100cm] (-1,-1) to (0,0);
\draw[thick, dash pattern=on 0.50cm off 0.15cm on 100cm] (1,-1) to (0,0); \draw[thick] (0,0)to (0,1);
\draw[thick] (-1.3,-0.6) to (1.3,-0.6);

\draw[thick,<->] (1.3,0) to (2.3,0);

\draw[thick] (2.5,-1) to (3.5,0); \draw[thick] (4.5,-1) to (3.5,0); \draw[thick, dash pattern=on 0.52cm off 0.15cm on 100cm] (3.5,0)to (3.5,1);
\draw[thick] (2.2,0.6) to (4.8,0.6);

\end{tikzpicture}

\end{minipage}
}
\caption{Reidemeister moves of type IV}\label{RIV}
\end{figure}
\begin{figure}

\begin{tikzpicture}
\draw[thick] (0,1) to (0,2);
\draw[thick,dash pattern=on 1.04cm off 0.15cm on 100cm] (1,-1) to [out=150, in=-90]  (-0.5,0.2); \draw[thick] (-0.5,0.2) to [out=90,in=210]  (0,1);
\draw[thick] (-1,-1) to [out=30, in=-90]  (0.5,0.2); \draw[thick] (0.5,0.2) to [out=90,in=-30]  (0,1);

\draw[thick,<->] (1,0.2) to (2,0.2);

\draw[thick] (3,1) to (3,2);
\draw[thick] (2,-1) to  [out=30, in=-90] (3,1);
\draw[thick] (4,-1) to  [out=150, in=-90] (3,1);

\draw[thick,<->] (4,0.2) to (5,0.2);

\draw[thick] (6,1) to (6,2);
\draw[thick] (7,-1) to [out=150, in=-90]  (5.5,0.2); \draw[thick] (5.5,0.2) to [out=90,in=210]  (6,1);
\draw[thick,,dash pattern=on 1.04cm off 0.15cm on 100cm] (5,-1) to [out=30, in=-90]  (6.5,0.2); \draw[thick] (6.5,0.2) to [out=90,in=-30]  (6,1);

\node at  (-1.5,0.2) {RV:};
\end{tikzpicture}
\caption{Reidemeister moves of type V}\label{RV}
\end{figure}
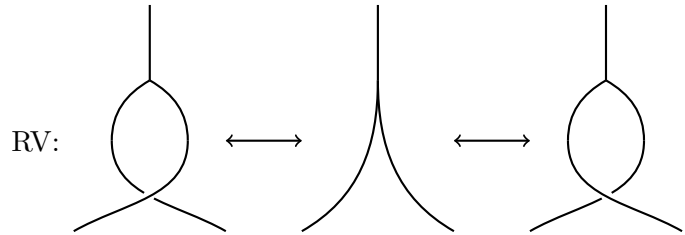

Suppose $\Gamma$ is an oriented spatial web and $D$ is a diagram for it with $n$ crossings. The we can form a chain complex $F(D)$ 
as in \eqref{cube}.
The same argument as in the link case shows that the chain homotopy equivalence class of $F(D)$ is invariant under the first three types of Reidemeister moves.
In order to obtain a well-defined invariant for $\Gamma$, we need to study the change of $F(D)$ under moves of types IV and V. We deal with RV first.
Pick an orientation for a type V move as in Figure \ref{RVDD'},
 we want to compare the chain complexes $F(D)$ and $F(D')$.
\begin{figure}
\begin{tikzpicture}
\tikzset{middlearrow/.style={
        decoration={markings,
            mark= at position 0.5 with {\arrow{#1}} ,
        },
        postaction={decorate}
    }
}
\draw[thick,middlearrow={latex}] (3,2) to (3,1);
\draw[thick,middlearrow={latex}] (2,-1) to  [out=30, in=-90] (3,1);
\draw[thick,middlearrow={latex}] (4,-1) to  [out=150, in=-90] (3,1);

\node at (3,-1.5) {$D'$};

\draw[thick,<->] (4,0.2) to (5,0.2);

\draw[thick,middlearrow={latex}] (6,2) to (6,1);
\draw[thick,] (7,-1) to [out=150, in=-90]  (5.5,0.2); \draw[thick,-latex] (5.5,0.2) to [out=90,in=210]  (6,1);
\draw[thick,dash pattern=on 1.04cm off 0.15cm on 100cm] (5,-1) to [out=30, in=-90]  (6.5,0.2); \draw[thick,-latex] (6.5,0.2) to [out=90,in=-30]  (6,1);

\node at (6,-1.5) {$D$};
\end{tikzpicture}
\caption{A Reidemeister move of type V for an oriented spatial web.}\label{RVDD'}
\end{figure}

\begin{PR}\label{RVD=D'}
Let $D$ and $D'$ be the two diagrams in Figure \ref{RVDD'}.
Up to a shifting of the homological and quantum gradings, $F(D)$ and $F(D')$ are chain homotopy equivalent.
\end{PR}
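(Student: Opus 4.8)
The plan is to adapt Khovanov's proof of invariance under the first Reidemeister move in \cite{Kh-sl3}: resolve the extra crossing of $D$, identify the two smoothings, collapse a digon on one of them using the foam relations of Proposition \ref{4axiom}, and conclude by Gaussian elimination. Since $D$ and $D'$ agree outside a disk and all cobordisms in the cube \eqref{cube} are product cobordisms outside that disk, only the local picture near the RV crossing matters. Viewing that crossing as one of the $n$ crossings of $D$ and expanding the cube along it (cf. the inductive description in Figure \ref{ind-cube}), $F(D)$ becomes the mapping cone of the skein map of Figure \ref{skein-co} between the two $(n-1)$-crossing diagrams obtained by smoothing only that crossing. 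By the conventions of Figures \ref{+01} and \ref{-01}, the \emph{oriented} smoothing reproduces $D'$, while the \emph{singular} smoothing produces a diagram $D''$ equal to $D'$ with a small digon inserted at the vertex $V$ where the two strands meet (two new trivalent vertices joined by an edge, together with a bigon running from one of them to $V$; compare the $1$-resolution in Figure \ref{+01}). Hence, up to the overall bigrading shift dictated by \eqref{cube},
\begin{equation*}
  F(D)\ \simeq\ \cone\!\bigl(F(D')\to F(D'')\bigr)\quad\text{or}\quad F(D)\ \simeq\ \cone\!\bigl(F(D'')\to F(D')\bigr),
\end{equation*}
according to the sign of the RV crossing, the arrow being induced by the skein cobordism.

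The digon in $D''$ is disjoint from the other $n-1$ crossings, so the digon-removal relation of \cite{Kh-sl3} --- a formal consequence of Proposition \ref{4axiom} --- yields an isomorphism of complexes $F(D'')\cong F(D')\{1\}\oplus F(D')\{-1\}$, the reduced web being isotopic to $D'$. Thus $F(D)$ is a mapping cone between $F(D')$ and $F(D')\{1\}\oplus F(D')\{-1\}$, whose two components are, up to sign, the composites of the skein cobordism with the zip/unzip (and dot) foams realizing the digon relation. Since the skein map preserves the quantum grading and the two digon-relation maps carry gradings $+1$ and $-1$, exactly one of these components is a grading-preserving endomorphism of (a shift of) $F(D')$. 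Gluing that composite foam up away from the rest of $\Gamma$ produces a closed foam which, evaluated by Proposition \ref{4axiom} (e.g.\ a sphere with two dots, or a theta-foam), shows the component in question is $\pm\idd_{F(D')}$, hence an isomorphism. Gaussian elimination (the cancellation lemma for mapping cones) then removes the cancelling pair, leaving $F(D)$ homotopy equivalent to a single shifted copy of $F(D')$; restoring the homological shift by $p_-$ and the quantum shift from \eqref{cube} gives $F(D)\simeq F(D')$ up to a shift of both gradings.

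The step I expect to be the real work is the local foam computation in the previous paragraph: drawing the skein cobordism of Figure \ref{skein-co} and the digon foams explicitly in a neighborhood of $V$, checking that the RV orientations are compatible with the seam and cyclic-order conventions, and evaluating the resulting closed foam via Proposition \ref{4axiom}. Everything else --- mapping cones, degree bookkeeping, and the already-established digon relation --- is formal. This is the $\mathfrak{sl}(3)$ analogue of the curl computation behind the first Reidemeister move; the only new feature is that the digon sits at a vertex of $\Gamma$ rather than on a plain edge, which is harmless because the digon relation and the foam evaluation are both local. The remaining versions of the RV move (the opposite crossing sign, and a source vertex in place of a sink) follow by the evident mirror and orientation-reversal symmetries, or by rerunning the same argument.
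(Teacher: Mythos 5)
Your overall architecture is the same as the paper's: expand the cube along the RV crossing to write $F(D)$ as the cone of the skein map from $F(D')$ to $F(D_1)$ (your $D''$, which is $D'$ with a digon at the vertex), split $F(D_1)\cong F(D')\{1\}\oplus F(D')\{-1\}$ by the digon lemma (Lemma \ref{digon-homology}), identify the degree-preserving component of the cone map, and cancel. The gap is in how you identify that component. You propose to ``glue the composite foam up away from the rest of $\Gamma$'' to get a closed foam and evaluate it by Proposition \ref{4axiom}, concluding the component is $\pm\idd_{F(D')}$. A single closed evaluation only computes one number --- one pairing of the induced map against one particular closure --- and cannot determine an endomorphism of $F(D')$, let alone show it is (plus or minus) the identity or even an isomorphism; recall $F(D')$ is defined by the universal construction, so pinning down a cobordism-induced map requires controlling \emph{all} pairings $F(\Phi\circ C\circ\Sigma)$, not one of them. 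Worse, $D'$ here still contains all the other crossings, so the component you must identify is a chain map of the whole complex $F(D')$, acting on every resolution $D'_v$; no single closed-foam number addresses that.

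The correct (and simpler) justification, which is what the paper uses, is geometric rather than numerical: the composition of the skein cobordism of Figure \ref{skein-co} with the undotted digon cobordism $\Sigma_{d1}$ (the one realizing the projection onto the degree-matching summand in Lemma \ref{digon-homology}) is isotopic rel boundary to the product cobordism (Figure \ref{compo-digon-D'D1}). Since the composite is a product away from the local picture, functoriality and isotopy invariance of $F$ then give that the component is literally the identity on every resolution, hence the identity chain map, and your Gaussian elimination step goes through. So the fix is local and does not change your outline, but as written the key step would fail: replace the closed-foam evaluation by this isotopy of cobordisms.
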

Before moving to the proof of this proposition, we need to state a result from \cite{Kh-sl3}.
\begin{LE}\cite{Kh-sl3}*{Proposition 8}\label{digon-homology}
Suppose $\Gamma_1$ and $\Gamma_2$ are two oriented planar webs  such that $\Gamma_2$ is obtained by removing a digon from $\Gamma_1$ as shown in
 in Figure \ref{digon-webs}. Then
 $$g=F(\Sigma_{d1})\oplus F(\Sigma_{d2}):F(\Gamma_1)\to F(\Gamma_2)\{1\}\oplus F(\Gamma_2)\{-1\}$$
  is an isomorphism
 where $\Sigma_{d1}$ and $\Sigma_{d2}$ are the cobordisms in Figure \ref{digon-co} (thought as cobordisms from the top to the bottom). Reversing the direction
 of the two cobordisms (i.e. regard them as cobordisms from the bottom to the top), we obtain $\overline{\Sigma}_{d1}$ and $\overline{\Sigma}_{d2}$. The compositions 
 $g\circ F(\overline{\Sigma}_{d1})$ and
 $g\circ F(\overline{\Sigma}_{d2})$ are embeddings onto the
 $F(\Gamma_2)\{-1\}$ and $F(\Gamma_2)\{1\}$ summands respectively (be careful about the order change).
\end{LE}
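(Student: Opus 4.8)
The plan is to reduce Lemma~\ref{digon-homology} to a local statement about foams near the digon and then settle it with the evaluation axioms of Proposition~\ref{4axiom}. Since $\Gamma_1$ and $\Gamma_2$ agree outside a disk $\Delta$ containing the digon, and since $F$ and the maps $F(\Phi)$ are compatible with gluing foams along a common web, it suffices to treat the standard local model: inside $\Delta$, $\Gamma_1$ is a digon (trivalent vertices $p,q$ joined by edges $e_1,e_2$, with a leg at $p$ and a leg at $q$) and $\Gamma_2$ is the single edge obtained by deleting $p,q,e_1,e_2$ and splicing the two legs. The foams $\Sigma_{d1},\Sigma_{d2}$ of Figure~\ref{digon-co} are product foams outside $\Delta\times[0,1]$ and differ inside only by a dot; computing $\deg(\Sigma)=\chi(\partial\Sigma)-2\chi(\Sigma\setminus\mathrm{Nbh}(\mathrm{dots}))$ shows that after the shifts $\{1\}$ and $\{-1\}$ both become degree-$0$ maps, and likewise $\bar\Sigma_{d1},\bar\Sigma_{d2}$ become degree-$0$ maps $F(\Gamma_2)\{1\}\to F(\Gamma_1)$ and $F(\Gamma_2)\{-1\}\to F(\Gamma_1)$. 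The goal is then to show directly that $g=F(\Sigma_{d1})\oplus F(\Sigma_{d2})$ has a two-sided inverse assembled from the $F(\bar\Sigma_{dj})$.

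For one direction, consider the four composites $\Sigma_{di}\circ\bar\Sigma_{dj}\colon\Gamma_2\to\Gamma_2$. Each is a product foam over $\Gamma_2$ outside $\Delta\times[0,1]$, and inside $\Delta\times[0,1]$ it is a closed ``digon bubble'' carrying $0$ or $1$ dots according to $i,j$, attached to the product. After a small isotopy, neck-cutting (axiom (3)), discarding the resulting closed spheres and tori (axioms (1),(2)), and evaluating the leftover theta-foams (axiom (4)), each such composite collapses to an element of $\mathbb{Z}\cdot\mathrm{id}_{F(\Gamma_2)}$; tracking the dots shows the $2\times 2$ matrix $\big(F(\Sigma_{di}\circ\bar\Sigma_{dj})\big)_{i,j}$ is anti-diagonal with $\pm\mathrm{id}$ off the diagonal, i.e. $\Sigma_{d1}\bar\Sigma_{d1}$ and $\Sigma_{d2}\bar\Sigma_{d2}$ vanish while $\Sigma_{d1}\bar\Sigma_{d2}$ and $\Sigma_{d2}\bar\Sigma_{d1}$ are isomorphisms. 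Reading off this matrix is exactly the final assertion of the lemma: $g\circ F(\bar\Sigma_{d1})$ is, up to sign, the inclusion of the $F(\Gamma_2)\{-1\}$ summand and $g\circ F(\bar\Sigma_{d2})$ that of the $F(\Gamma_2)\{1\}$ summand, the apparent ``order change'' being precisely anti-diagonality. In particular $g$ is a split surjection.

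For the other direction one shows that $h:=\pm F(\bar\Sigma_{d2})\oplus\pm F(\bar\Sigma_{d1})$ is also a left inverse of $g$, i.e. $h\circ g=\mathrm{id}_{F(\Gamma_1)}$; this amounts to the categorified digon relation $\mathrm{id}_{\Gamma_1}=\pm\,\bar\Sigma_{d1}\circ\Sigma_{d2}\pm\bar\Sigma_{d2}\circ\Sigma_{d1}$ holding in the quotient by the evaluation relations, and this is the step I expect to be the main obstacle. I would prove it by closing up: glue an arbitrary test foam $\Phi\colon\Gamma_1\to\emptyset$ onto both sides, so the claimed identity becomes a relation between evaluations of closed foams; neck-cutting (axiom (3)) along the facet adjacent to the digon, then removing the resulting spheres and tori (axioms (1),(2)) and evaluating the leftover theta-foams (axiom (4)), reduces it to the $\mathfrak{sl}(3)$ ``square'' relation for $F$, itself a formal consequence of Proposition~\ref{4axiom} (alternatively, one simply invokes the corresponding foam relation of \cite{Kh-sl3}). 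Once $g\circ h=\mathrm{id}$ and $h\circ g=\mathrm{id}$ are in hand, $g$ is an isomorphism with inverse $h$, and the shifts $\{1\},\{-1\}$ are forced by the degree computation of the first paragraph, consistently with the decategorified relation $[\Gamma_1]=[2]\,[\Gamma_2]$. The only genuinely nontrivial point is the verification of the categorified digon relation from the four axioms; the rest is bookkeeping with dots and grading shifts.
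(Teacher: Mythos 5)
The paper itself offers no proof of this lemma: it is imported wholesale from \cite{Kh-sl3}*{Proposition 8}, so the only meaningful comparison is with Khovanov's original argument, and your proposal is essentially a reconstruction of that argument rather than a different route. Your first half is the standard and essentially correct computation: each composite $\Sigma_{di}\circ\overline{\Sigma}_{dj}\colon\Gamma_2\to\Gamma_2$ is the product foam with a bubble (two disks attached to the big facet along the seam circle), and neck-cutting the big facet around the bubble (axiom (3)) followed by the sphere and theta evaluations (axioms (1), (2), (4)) evaluates it; this yields split surjectivity of $g$ and the clause about $g\circ F(\overline{\Sigma}_{d1})$, which is the part the paper actually uses later. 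One inaccuracy in your bookkeeping, though: with $\overline{\Sigma}_{d2}$ taken literally as the time-reversal of $\Sigma_{d2}$ (as this paper defines it), the two dots of $\Sigma_{d2}\circ\overline{\Sigma}_{d2}$ land on the \emph{same} disk of the bubble, so the neck-cutting computation gives the theta evaluation $F(\Theta(2,0,1))=\pm1$ against the once-dotted product foam, i.e.\ this composite is a nonzero multiple of the dot operator $X$, not $0$. The matrix is therefore only anti-triangular; strict anti-diagonality, and the literal ``onto the $F(\Gamma_2)\{1\}$ summand'' clause, require Khovanov's convention in which the dotted creation foam carries its dot on the \emph{other} digon facet. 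This does not affect invertibility or surjectivity, but ``tracking the dots'' is exactly where the care is needed.

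The genuine gap is in your second half. The categorified digon relation $\mathrm{id}_{F(\Gamma_1)}=\pm F(\overline{\Sigma}_{d1}\circ\Sigma_{d2})\pm F(\overline{\Sigma}_{d2}\circ\Sigma_{d1})$ is precisely the nontrivial content of the lemma (it is what gives injectivity of $g$), and your sketched derivation does not go through as described. Axiom (3) needs a circle inside a single facet, and in the identity foam $\Gamma_1\times I$ every available circle near the digon is inessential in its facet (the natural candidate, a parallel of the bigon, crosses the two vertex seams), so a one-step plan of ``neck-cut, discard closed surfaces, evaluate thetas'' cannot by itself produce the two dotted remove-and-recreate foams; and the claimed reduction to the $\mathfrak{sl}(3)$ square relation is unsubstantiated --- that is a different derived local relation (the one underlying Lemma \ref{square-homology}), and there is no evident implication from it to the digon relation. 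Your fallback of invoking the corresponding foam relation from \cite{Kh-sl3} is legitimate, but that relation \emph{is} the substance of Proposition 8, so at that point the argument collapses to the same citation the paper already makes. In short: from Proposition \ref{4axiom} you have honestly obtained the bubble computations and split surjectivity; the left-invertibility of $g$ you have not derived, and the proposed route to it is misdirected.
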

\begin{figure}
\begin{tikzpicture}
\tikzset{middlearrow/.style={
        decoration={markings,
            mark= at position 0.5 with {\arrow{#1}} ,
        },
        postaction={decorate}
    }
}
\tikzset{
    partial ellipse/.style args={#1:#2:#3}{
        insert path={+ (#1:#3) arc (#1:#2:#3)}
    }
}

\node at (-1,0.5) {$\Gamma_1:$};
\draw[thick,-latex] (0,2) to (0,1); \draw[thick,-latex] (0,0) to (0,-1);

\draw[thick,middlearrow={latex}] (0,0.5) [partial ellipse=-90:90: 0.3cm and 0.5cm];
\draw[thick,middlearrow={latex}] (0,0.5) [partial ellipse=270:90: 0.3cm and 0.5cm];

\node at (2,0.5) {$\Gamma_2:$};
\draw[thick, -latex] (3,2) to (3,-1);
\end{tikzpicture}
\caption{}\label{digon-webs}
\end{figure}

\begin{figure}
\centering
\begin{minipage}{0.5\textwidth}
\begin{tikzpicture}
\tikzset{middlearrow/.style={
        decoration={markings,
            mark= at position 0.5 with {\arrow{#1}} ,
        },
        postaction={decorate}
    }
}
\tikzset{
    partial ellipse/.style args={#1:#2:#3}{
        insert path={+ (#1:#3) arc (#1:#2:#3)}
    }
}

\draw[thick,middlearrow={latex}] (1,0) to (0,0);
\draw[thick,middlearrow={latex}]  (1.5,0) [partial ellipse=180:0: 0.5cm and 0.2cm];
\draw[thick,middlearrow={latex}]  (1.5,0) [partial ellipse=180:360: 0.5cm and 0.2cm];
\draw[thick,-latex] (3,0) to (2,0);

\draw[ultra thick]  (1.5,0) [partial ellipse=180:360: 0.5cm and 0.5cm];

\draw (3,0) to (3,-2);
\draw (0,0) to (0,-2);

\draw[thick,middlearrow={latex}] (3,-2) to (0,-2);

\node at (-1,-1) {$\Sigma_{d1}:$};
\end{tikzpicture}
\end{minipage}%
\begin{minipage}{0.5\textwidth}
\begin{tikzpicture}
\tikzset{middlearrow/.style={
        decoration={markings,
            mark= at position 0.5 with {\arrow{#1}} ,
        },
        postaction={decorate}
    }
}
\tikzset{
    partial ellipse/.style args={#1:#2:#3}{
        insert path={+ (#1:#3) arc (#1:#2:#3)}
    }
}

\draw[thick,middlearrow={latex}] (1,0) to (0,0);
\draw[thick,middlearrow={latex}]  (1.5,0) [partial ellipse=180:0: 0.5cm and 0.2cm];
\draw[thick,middlearrow={latex}]  (1.5,0) [partial ellipse=180:360: 0.5cm and 0.2cm];
\draw[thick,-latex] (3,0) to (2,0);

\draw[ultra thick]  (1.5,0) [partial ellipse=180:360: 0.5cm and 0.5cm];

\draw (3,0) to (3,-2);
\draw (0,0) to (0,-2);

\draw[thick,middlearrow={latex}] (3,-2) to (0,-2);

\draw[fill]  (1.5,0.05) circle (2pt);
\node at (-1,-1) {$\Sigma_{d2}:$};
\end{tikzpicture}
\end{minipage}
\caption{The digon cobordisms.}\label{digon-co}
\end{figure}

\begin{figure}
\begin{tikzpicture}
\tikzset{middlearrow/.style={
        decoration={markings,
            mark= at position 0.5 with {\arrow{#1}} ,
        },
        postaction={decorate}
    }
}
\tikzset{
    partial ellipse/.style args={#1:#2:#3}{
        insert path={+ (#1:#3) arc (#1:#2:#3)}
    }
}

\node at (-1,0) {$D_1:$};
\draw[thick,-latex] (0,2) to (0,1);
\draw[thick,middlearrow={latex}] (0,0.5) [partial ellipse=-90:90: 0.3cm and 0.5cm];
\draw[thick,middlearrow={latex}] (0,0.5) [partial ellipse=270:90: 0.3cm and 0.5cm];
\draw[thick, -latex] (0,0) to (0,-1);
\draw[thick, -latex] (-0.5,-1.5) to (0,-1); \draw[thick, -latex] (0.5,-1.5) to (0,-1);
\end{tikzpicture}
\caption{}\label{RVD1}
\end{figure}

\begin{proof}[Proof of Proposition \ref{RVD=D'}]
Using the inductive definition in Figure \ref{ind-cube}, we have
\begin{equation*}
  F(D)= \textbf{Tot}[F(D')\{-2\}\to F(D_1)\{-3\}]
\end{equation*}
where $D_1$ is shown in Figure \ref{RVD1} and the map from $F(D')$ to $F(D_1)$ is induced by the cobordism in Figure \ref{RVD'D1-co}.
\begin{figure}
% R5 cobordism
\begin{tikzpicture}
\tikzset{middlearrow/.style={
        decoration={markings,
            mark= at position 0.5 with {\arrow{#1}} ,
        },
        postaction={decorate}
    }
}
\tikzset{
    partial ellipse/.style args={#1:#2:#3}{
        insert path={+ (#1:#3) arc (#1:#2:#3)}
    }
}

\draw[thick,-latex] (-1.2,-0.7) to (0,0);
\draw[thick, -latex] (-0.8,0.5) to (0,0);

\draw[thick,middlearrow={latex}] (1,0) to (0,0);
\draw[thick,middlearrow={latex}]  (1.5,0) [partial ellipse=180:0: 0.5cm and 0.2cm];
\draw[thick,middlearrow={latex}]  (1.5,0) [partial ellipse=180:360: 0.5cm and 0.2cm];
\draw[thick,-latex] (3,0) to (2,0);

\draw[ultra thick] (0.5,0) [partial ellipse=0:180: 0.5cm and 1cm];
\draw[fill=gray!30, opacity=0.5] (0.5,0.02) [partial ellipse=0:180: 0.47cm and 0.97cm];

\draw (-1.2,-0.7) to (-1.2,1.3);
\draw[dashed] (-0.8,0.5)  to (-0.8,1.6);      \draw (-0.8,1.6) to (-0.8,2.5);

\draw[thick,middlearrow={latex}] (2,1.3) [partial ellipse=180:90: 3.2cm and 0.6cm];
\draw[thick,middlearrow={latex}] (2,2.5) [partial ellipse=180:270: 2.8cm and 0.6cm];
\draw[thick,-latex] (3,1.9) to (2,1.9);
\draw[ultra thick] (2,1.9) to (2,0);

\draw (3,0) to (3,1.9);

\draw[thick] (0.5,2.8) [partial ellipse=213:270: 0.3cm and 1.8cm];
\draw[ dash pattern=on 0.05cm] (0.5,2.8) [partial ellipse=270:332: 0.3cm and 1.8cm];
\end{tikzpicture}
\caption{The cobordism from $D'$ to $D_1$}\label{RVD'D1-co}
\end{figure}
By Lemma \ref{digon-homology}, we have 
$$
F(D_1)\{-3\}=F(D')\{-2\}\oplus F(D')\{-4\}
$$ 
The composition map
\begin{equation*}
 f: F(D')\{-2\}\to F(D_1)\{-3\}\to F(D')\{-2\}
\end{equation*}
is induced by the composition of the digon cobordism in Figure \ref{digon-co} and the cobordism in Figure \ref{RVD'D1-co}. This composition cobordism can be deformed into
the product cobordism (see Figure \ref{compo-digon-D'D1}). Therefore $f$ is the identify map. This implies
\begin{equation*}
  [F(D')\{-2\}\to F(D_1)\{-3\}]\cong [F(D')\{-2\}\to F(D')\{-2\}\oplus F(D')\{-4\}]
\end{equation*}
is chain homotopy equivalent to
\begin{equation*}
  0\to  F(D')\{-4\}
\end{equation*}
we have
\begin{equation*}
  \textbf{Tot}[0\to  F(D')\{-4\}]=F(D')\{-4\}[1]
\end{equation*}
where $[1]$ means increasing the homological grading by $1$. This completes the proof.
\begin{figure}
\begin{tikzpicture}
\tikzset{middlearrow/.style={
        decoration={markings,
            mark= at position 0.5 with {\arrow{#1}} ,
        },
        postaction={decorate}
    }
}
\tikzset{
    partial ellipse/.style args={#1:#2:#3}{
        insert path={+ (#1:#3) arc (#1:#2:#3)}
    }
}

\draw[thick,middlearrow={latex}] (1,0) to (0,0);
\draw[thick,dashed, middlearrow={latex}]  (1.5,0) [partial ellipse=180:0: 0.5cm and 0.2cm];
\draw[thick,middlearrow={latex}]  (1.5,0) [partial ellipse=180:360: 0.5cm and 0.2cm];
\draw[thick,-latex] (3,0) to (2,0);

\draw[ultra thick]  (1.5,0) [partial ellipse=180:360: 0.5cm and 0.5cm];

\draw[thick,-latex] (-1.2,-0.7) to (0,0);
\draw[thick,dashed, -latex] (-0.8,0.5) to (0,0);

\draw[dashed] (-0.8,0.5) to (-0.8, -0.48);   \draw[dashed] (-0.8, -0.48) to (-0.8,-1.5);

\draw (-1.2,-0.7) to (-1.2,-2.7);
\draw (3,0) to (3,-2);
\draw[ultra thick] (0,0) to (0,-2);

\draw[thick,middlearrow={latex}] (3,-2) to (0,-2);
\draw[thick,-latex] (-1.2,-2.7) to (0,-2);
\draw[thick, dashed, -latex] (-0.8,-1.5) to (0,-2);

\draw[ultra thick] (0.5,0) [partial ellipse=0:180: 0.5cm and 1cm];
\draw[fill=gray!30, opacity=0.5] (0.5,0.02) [partial ellipse=0:180: 0.47cm and 0.97cm];

\draw (-1.2,-0.7) to (-1.2,1.3);
\draw[dashed] (-0.8,0.5)  to (-0.8,1.6);      \draw (-0.8,1.6) to (-0.8,2.5);

\draw[thick,middlearrow={latex}] (2,1.3) [partial ellipse=180:90: 3.2cm and 0.6cm];
\draw[thick,middlearrow={latex}] (2,2.5) [partial ellipse=180:270: 2.8cm and 0.6cm];
\draw[thick,-latex] (3,1.9) to (2,1.9);
\draw[ultra thick] (2,1.9) to (2,0);

\draw (3,0) to (3,1.9);

\draw[thick] (0.5,2.8) [partial ellipse=213:270: 0.3cm and 1.8cm];
\draw[ dash pattern=on 0.05cm] (0.5,2.8) [partial ellipse=270:332: 0.3cm and 1.8cm];

\draw[thick,middlearrow={latex}] (4.8,1.3) to (7.5,2);
\draw[thick,middlearrow={latex}] (5.2,2.5) to (7.5,2);
\draw[thick,-latex] (9,2) to (7.5,2);

\draw (4.8,-2.7) to (4.8,1.3);
\draw[dashed] (5.2,-1.5)  to (5.2,1.6);      \draw (5.2,1.4) to (5.2,2.5);
\draw (9,2) to (9,-2);

\draw[thick,middlearrow={latex}] (4.8,-2.7) to (7.5,-2);
\draw[thick,dashed, middlearrow={latex}] (5.2,-1.5) to (7.5,-2);
\draw[thick,-latex] (9,-2) to (7.5,-2);

\draw[ultra thick] (7.5,2) to (7.5,-2);

\node at (4,0) {=};
\end{tikzpicture}
\caption{The composition of the two cobordisms in Figure \ref{digon-co} and Figure \ref{RVD'D1-co}. 
The bold arcs represent the seams.}\label{compo-digon-D'D1}
\end{figure}
\end{proof}
Proposition \ref{RVD=D'} only deals with one choice of oriented type V move. Similar proofs work for other possible oriented type V moves, so we skip the details for other
situations.

Next we deal with the type IV moves. Again we only deal with a specific oriented type IV move in detail and skip the proofs for other situations since the proofs are similar.

This time we need a ``square lemma'' besides the ``digon lemma'' used before.
\begin{figure}
% Square cobordism
\begin{tikzpicture}
\tikzset{middlearrow/.style={
        decoration={markings,
            mark= at position 0.8 with {\arrow{#1}} ,
        },
        postaction={decorate}
    }
}

\tikzset{
    partial ellipse/.style args={#1:#2:#3}{
        insert path={+ (#1:#3) arc (#1:#2:#3)}
    }
}

% Square

\draw[thick,-latex] (0,0) to (0,1);
\draw[thick,-latex] (0,0) to (1,0);
\draw[thick,-latex] (1,1) to (0,1);
\draw[thick,-latex] (1,1) to (1,0);

\draw[thick,-latex] (-1,2) to (0,1);
\draw[thick,-latex] (1,1) to (2,2);
\draw[thick,-latex] (0,0) to (-1,-1);
\draw[thick,-latex] (2,-1) to (1,0);

% Resolve the square
\draw[thick] (-5.5,2) to [out=-60,in=90] (-5,0.5); \draw[thick,-latex] (-5,0.5) to [out=-90,in=60]  (-5.5,-1);
\draw[thick,-latex] ((-2.5,-1) to [out=120,in=-90]  (-3,0.5) to [out=90,in=240] (-2.5,2);

\draw[thick,-latex] (-4,0.5) [partial ellipse=-180:180: 0.7cm and 0.7cm];

% Resolve the square and remove the circle
\draw[thick] (-10,2) to [out=-60,in=90] (-9.5,0.5); \draw[thick,-latex] (-9.5,0.5) to [out=-90,in=60]  (-10,-1);
\draw[thick,-latex] ((-7,-1) to [out=120,in=-90]  (-7.5,0.5) to [out=90,in=240] (-7,2);

\draw[red,dashed]  (0,0.5) [partial ellipse=-180:180: 0.4cm and 0.9cm];
\draw[red,dashed]  (1,0.5) [partial ellipse=-180:180: 0.4cm and 0.9cm];

\draw[red,dashed]  (-4.8,0.5) [partial ellipse=-180:180: 0.5cm and 0.9cm];
\draw[red,dashed]  (-3.2,0.5) [partial ellipse=-180:180: 0.5cm and 0.9cm];

\draw[->] (-2,0.5) to (-1,0.5);
\draw[->] (-2-5+0.25,0.5) to (-1-5+0.25,0.5);

\node at (0.5,-1.5) {$\Gamma_{sq}$};
\node at (-4,-1.5) {$\Gamma_{1/2}$};
\node at (-8.5,-1.5) {$\Gamma_{0}$};

\node at (0.5,-1.5) {$\Gamma_{sq}$};
\node at (-4,-1.5) {$\Gamma_{1/2}$};
\node at (-8.5,-1.5) {$\Gamma_{0}$};
\node at (-10.75,0.5) {$\Sigma_{sq}:$};

\end{tikzpicture}

\begin{tikzpicture}
\tikzset{middlearrow/.style={
        decoration={markings,
            mark= at position 0.8 with {\arrow{#1}} ,
        },
        postaction={decorate}
    }
}

\tikzset{
    partial ellipse/.style args={#1:#2:#3}{
        insert path={+ (#1:#3) arc (#1:#2:#3)}
    }
}

% Square

\draw[thick,-latex] (0,0) to (0,1);
\draw[thick,-latex] (0,0) to (1,0);
\draw[thick,-latex] (1,1) to (0,1);
\draw[thick,-latex] (1,1) to (1,0);

\draw[thick,-latex] (-1,2) to (0,1);
\draw[thick,-latex] (1,1) to (2,2);
\draw[thick,-latex] (0,0) to (-1,-1);
\draw[thick,-latex] (2,-1) to (1,0);

% Resolve the square
\draw[thick,-latex] (-10+4.5,2) to [out=-30,in=180] (-8.5+4.5,1.5)  to [out=0,in=210] (-7+4.5,2) ;
\draw[thick,-latex] (-7+4.5,-1) to [out=150,in=0]  (-8.5+4.5,-0.5) to [out=180,in=30](-10+4.5,-1);

\draw[thick,-latex] (-4,0.5) [partial ellipse=180:-180: 0.7cm and 0.7cm];

% Resolve the square and remove the circle
\draw[thick,-latex] (-10,2) to [out=-30,in=180] (-8.5,1.5)  to [out=0,in=210] (-7,2) ;
\draw[thick,-latex] (-7,-1) to [out=150,in=0]  (-8.5,-0.5) to [out=180,in=30](-10,-1);

\draw[red,dashed]  (0.5,0) [partial ellipse=-180:180: 0.9cm and 0.4cm];
\draw[red,dashed]  (0.5,1) [partial ellipse=-180:180: 0.9cm and 0.4cm];

\draw[red,dashed]  (-4,-0.3) [partial ellipse=-180:180: 0.9cm and 0.5cm];
\draw[red,dashed]  (-4,1.3) [partial ellipse=-180:180: 0.9cm and 0.5cm];

\draw[->] (-2,0.5) to (-1,0.5);
\draw[->] (-2-5+0.25,0.5) to (-1-5+0.25,0.5);

\node at (0.5,-1.5) {$\Gamma_{sq}$};
\node at (-4,-1.5) {$\Gamma_{1/2}'$};
\node at (-8.5,-1.5) {$\Gamma_{0}'$};
\node at (-10.75,0.5) {$\Sigma_{sq}':$};
\end{tikzpicture}
\caption{The square cobordisms.}\label{square-co}
\end{figure}
The square cobordism $\Sigma_{sq}$ from $\Gamma_0$ to $\Gamma_{sq}$ in Figure \ref{square-co} can be described as the composition of two cobordisms. The first one is the
``birth cobordism'' from $\Gamma_0$ to $\Gamma_{1/2}$ which is the disjoint union of the product cobordism $\Gamma_0\times I$ and a disk (thought as a cobordism
from the empty set to a circle). The second one consists of two skein cobordisms in the regions surrounded by the dashed red ellipses 
in Figure \ref{square-co}.
The square cobordism $\Sigma_{sq}'$ can be described similarly.
\begin{LE}\cite{Kh-sl3}*{Proposition 9}\label{square-homology}
Let $\Gamma_0,\Gamma_0', \Gamma_{sq}$ be the oriented planar webs given in Figure \ref{square-co}. Then
\begin{equation*}
  g_{sq}:=F(\Sigma_{sq})+ F(\Sigma'_{sq}) : F(\Gamma_0)\oplus F(\Gamma_0')\to F(\Gamma_{sq})
\end{equation*}
is an isomorphism. Reversing the direction of the cobordisms we obtain $\overline{\Sigma}_{sq}$ and  $\overline{\Sigma}_{sq}'$.
The maps
$F(\overline{\Sigma}_{sq})\circ g_{sq}$ and $F(\overline{\Sigma}_{sq}')\circ g_{sq}$ 
are projections onto the $F(\Gamma_0)$ and $F(\Gamma_0')$ summands respectively.
\end{LE}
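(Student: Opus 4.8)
The plan is to write down an explicit two--sided inverse of $g_{sq}$, built from the reversed square cobordisms. Put $h_{sq}:=F(\overline{\Sigma}_{sq})\oplus F(\overline{\Sigma}'_{sq}):F(\Gamma_{sq})\to F(\Gamma_0)\oplus F(\Gamma_0')$. It then suffices to establish the four ``orthogonality'' identities
\begin{equation*}
F(\overline{\Sigma}_{sq})\circ F(\Sigma_{sq})=\mathrm{id}_{F(\Gamma_0)},\qquad F(\overline{\Sigma}'_{sq})\circ F(\Sigma'_{sq})=\mathrm{id}_{F(\Gamma_0')},
\end{equation*}
\begin{equation*}
F(\overline{\Sigma}'_{sq})\circ F(\Sigma_{sq})=0,\qquad F(\overline{\Sigma}_{sq})\circ F(\Sigma'_{sq})=0,
\end{equation*}
together with the ``completeness'' identity
\begin{equation*}
F(\Sigma_{sq})\circ F(\overline{\Sigma}_{sq})+F(\Sigma'_{sq})\circ F(\overline{\Sigma}'_{sq})=\mathrm{id}_{F(\Gamma_{sq})}.
\end{equation*}
The first four give $h_{sq}\circ g_{sq}=\mathrm{id}$ (hence also the two ``projection'' assertions of the lemma, since $F(\overline{\Sigma}_{sq})\circ g_{sq}=\mathrm{id}_{F(\Gamma_0)}\oplus 0$ and likewise for $\Gamma_0'$), and the last gives $g_{sq}\circ h_{sq}=\mathrm{id}$, so $g_{sq}$ is an isomorphism. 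Each identity above is an equality of the form $F(\Phi)=F(\Phi')$ for foam cobordisms $\Phi,\Phi'$ that coincide with a product cobordism outside the two $3$--balls bounded by the dashed red ellipses of Figure \ref{square-co}; by the definition of $F$ on webs it is therefore enough to verify the corresponding \emph{local} relations in Khovanov's category of foams, i.e.\ to evaluate, using $F$ of Proposition \ref{4axiom}, the closed foams obtained by closing these composites off in all possible ways.

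The next step is to set up the local foam relations that carry the computation. From axioms~(1) and~(4) one gets the sphere relations (a $2$--sphere carrying exactly two dots evaluates to $-1$, a torus to $3$, and every other closed dotted orientable surface, as well as every theta--foam whose dot triple is not a cyclic permutation of $(0,1,2)$ or $(0,2,1)$, evaluates to $0$); from the surgery axiom~(3) together with the sphere relations one gets the \emph{neck--cutting} relation, which replaces a trivially embedded tube in a facet by a signed sum of three product pieces carrying $0$, $1$, $2$ dots. With these at hand I would analyse the composites: after an isotopy rel boundary, each ``diagonal'' composite $\overline{\Sigma}_{sq}\circ\Sigma_{sq}$ and $\overline{\Sigma}'_{sq}\circ\Sigma'_{sq}$ is a product cobordism on $\Gamma_0$ (resp.\ $\Gamma_0'$) with a closed piece attached, and applying neck--cutting, the sphere relations, and --- where a digon appears in an intermediate web --- the digon isomorphism of Lemma \ref{digon-homology}, this collapses to the identity; each ``off--diagonal'' composite, a cobordism between the two \emph{distinct} webs $\Gamma_0$ and $\Gamma_0'$, deforms to a foam containing a closed component (a lens--shaped $2$--sphere or a theta--foam) whose dot pattern is forbidden by axiom~(1) or~(4), so $F$ of it vanishes. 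The completeness identity is the ``square neck--cutting'' relation: cutting the annular neck of $\Gamma_{sq}$ along which the two square facets meet expresses $\mathrm{id}_{\Gamma_{sq}}$ as the sum of $\Sigma_{sq}\circ\overline{\Sigma}_{sq}$ and $\Sigma'_{sq}\circ\overline{\Sigma}'_{sq}$, again checked by closing off and evaluating with $F$. Finally, the degree formula $\deg(\Sigma)=\chi(\partial\Sigma)-2\chi(\Sigma\setminus\text{Nbh}(\text{dots}))$ gives $\deg\Sigma_{sq}=\deg\Sigma'_{sq}=0$, so $g_{sq}$ preserves the quantum grading and the decomposition $F(\Gamma_{sq})\cong F(\Gamma_0)\oplus F(\Gamma_0')$ needs no grading shift.

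I expect the real work to be bookkeeping rather than insight: pinning down the signs and dot counts in neck--cutting precisely enough that $\Sigma_{sq}\circ\overline{\Sigma}_{sq}$ and $\Sigma'_{sq}\circ\overline{\Sigma}'_{sq}$ come out as genuinely \emph{orthogonal} idempotents that \emph{sum} to $\mathrm{id}_{F(\Gamma_{sq})}$, and carefully justifying the foam isotopies rel boundary that bring each composite into the normal form used above. It is also worth stressing that, since $F(\Gamma)$ is defined over $\mathbb{Z}$, the completeness identity cannot be replaced by a graded--rank count: one genuinely needs both $h_{sq}\circ g_{sq}$ and $g_{sq}\circ h_{sq}$ to equal the identity. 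As a consistency check on all the signs one can redo the argument by instead creating a digon inside the square face and applying Lemma \ref{digon-homology} twice, reducing the square decomposition to the sphere relations at the price of tracking several $\{\pm 1\}$ shifts.
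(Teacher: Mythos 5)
Your overall skeleton is the right one, and it is essentially the argument of the source the paper itself relies on: the paper gives no proof of Lemma \ref{square-homology}, quoting it from \cite{Kh-sl3}*{Proposition 9}, and Khovanov's proof does proceed by exhibiting the reversed foams as a two-sided inverse, checking the four orthogonality identities and the completeness identity through the closed-foam evaluation. Your reduction ``two foams induce the same map iff every closure has the same evaluation'' is also legitimate, given the universal-construction definition of $F(\Gamma)$, and your five target identities are exactly equivalent to the statement of the lemma.

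The gap is in how you propose to verify those identities, and since that verification is the entire content of the lemma it is a genuine one. The diagonal composite $\overline{\Sigma}_{sq}\circ\Sigma_{sq}$ is \emph{not} isotopic rel boundary to a product cobordism with a closed piece attached: each zip/unzip pair leaves a seam circle along which the two product sheets and a membrane facet meet, so the composite is a connected singular foam, and it becomes the identity only after applying the local relations derived from Proposition \ref{4axiom} (neck cutting with its sign $-F(\Sigma)=F(\Sigma_1)+F(\Sigma_2)+F(\Sigma_3)$, the sphere and theta evaluations, three dots killing a facet) --- i.e.\ as an equality of induced maps, not of foams up to isotopy. Likewise the off-diagonal composite $\overline{\Sigma}'_{sq}\circ\Sigma_{sq}$ does not ``deform to a foam containing a closed component'': it is connected, it carries no dots at all, and a ``lens-shaped $2$-sphere'' (two disks meeting along a circle where only two facets abut) is not a legal closed foam; its vanishing must again be extracted from the relations, e.g.\ by neck cutting until one is left with spheres and theta foams whose dot distributions evaluate to zero. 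Finally, over $\mathbb{Z}$ the signs you defer as ``bookkeeping'' are precisely what decides whether the diagonal composites are $+\mathrm{id}$ rather than $-\mathrm{id}$ and whether the two idempotents sum to $\mathrm{id}_{F(\Gamma_{sq})}$; without carrying out these computations (or simply invoking \cite{Kh-sl3}*{Proposition 9}, as the paper does), the lemma is asserted rather than proved.
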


\begin{PR}\label{RIVD=D'}
Let $D$ and $D'$ be the two diagrams in Figure \ref{RIVDD'}.
Up to a shifting of the homological and quantum gradings, $F(D)$ and $F(D')$ are chain homotopy equivalent.
\end{PR}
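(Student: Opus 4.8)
The plan is to imitate the proof of Proposition~\ref{RVD=D'}, replacing the single crossing there by the two crossings that occur at the type IV site in $D$, and using the ``square lemma'' (Lemma~\ref{square-homology}) in addition to the ``digon lemma'' (Lemma~\ref{digon-homology}). Since $D$ has two crossings in the region where the strand is pushed past the trivalent vertex (whereas $D'$ has only one), the inductive description of Figure~\ref{ind-cube} exhibits $F(D)$ as the total complex of a two-dimensional cube with corners $F(\Gamma_{00}),F(\Gamma_{01}),F(\Gamma_{10}),F(\Gamma_{11})$, where $\Gamma_v$ is the oriented planar web obtained by $0$- or $1$-resolving the two crossings according to $v\in\{0,1\}^2$, the grading shifts are those dictated by \eqref{cube}, and the edge maps are induced by the skein cobordisms of Figure~\ref{skein-co}. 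The first step is to draw the four corner webs explicitly and to recognize which of them agree (up to planar isotopy, and after obvious grading shifts) with the resolutions of the single crossing of $D'$, and which contain an innermost digon or an innermost square: I expect that one corner reproduces one resolution of $D'$ directly, one corner carries a digon to which Lemma~\ref{digon-homology} applies, and --- possibly only after that digon has been removed --- a further corner carries a square ($4$-cycle of thin edges) to which Lemma~\ref{square-homology} applies.

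The second step is to feed these identifications into the cube. Each application of Lemma~\ref{digon-homology} or Lemma~\ref{square-homology} rewrites the $F$ of a corner web as a direct sum of copies of $F$ of simpler webs, each copy carrying an explicit quantum grading shift ($+1$, $-1$, or $0$). The decisive point, exactly as for the type~V move, is that the corresponding components of the cube differential are isomorphisms --- after composing with the isomorphisms supplied by the two lemmas, they are in fact identity maps; one verifies this (as in Figure~\ref{compo-digon-D'D1}) by deforming the relevant composite foam, namely a digon or square cobordism followed by a skein cobordism of Figure~\ref{skein-co}, into a product cobordism, possibly together with a closed sphere- or theta-foam component whose value is supplied by Proposition~\ref{4axiom}. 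Once these components are recognized as isomorphisms, repeated Gaussian elimination collapses the acyclic part of the cube and leaves precisely the two-term complex formed by $F$ of the two resolutions of the crossing of $D'$, that is $F(D')$, shifted in both the homological and the quantum grading by amounts that can be read off from the bookkeeping. This yields the asserted shifted chain homotopy equivalence. As in Proposition~\ref{RVD=D'}, it is enough to treat one oriented version of the move in detail; the remaining oriented versions follow by the same argument with the roles of the $0$- and $1$-resolutions, and of the two lemmas, interchanged.

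The part I expect to be genuinely delicate is organizational rather than conceptual: one has to match the four corner webs correctly both to the resolutions of $D'$ and to the domains of Lemmas~\ref{digon-homology} and~\ref{square-homology}, keeping in mind that the very labelling of the corners depends on the signs of the two crossings and hence on the orientation chosen in Figure~\ref{RIVDD'}; one then has to propagate the quantum grading shifts consistently through both lemmas; and finally one has to order the several Gaussian eliminations so that they do not interfere, in particular re-examining, after the digon contribution has been cancelled, that the induced differentials still make the square contribution cancel exactly. The auxiliary foam isotopies are each a routine finite check, but they involve seams and require care with the orientations of the facets, and one must confirm that no deformation changes the isotopy class of the foam or introduces a closed component with nonzero $F$-value.
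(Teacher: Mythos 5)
Your proposal follows essentially the same route as the paper's proof: expand $F(D)$ as the total complex of the two-crossing cube, apply Lemma \ref{digon-homology} to the corner with the digon and Lemma \ref{square-homology} to the corner with the square, identify the relevant components of the cube differential as isomorphisms by deforming the composite foams (digon/square cobordism followed by the skein cobordism) as in Figure \ref{compo-digon-D'D1}, and then cancel the acyclic part --- which the paper phrases as quotienting by a null-homotopic subcomplex rather than Gaussian elimination --- leaving the two-term complex computing $F(D')$ up to the grading shift. The outline is correct and matches the paper's argument in both structure and key steps.
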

\begin{figure}
\centering
% Reidemeister move 4
\begin{tikzpicture}
\draw[thick,-latex] (-1,-1) to (0,0); \draw[thick,-latex] (1,-1) to (0,0); \draw[thick,-latex] (0,1)to (0,0);
\draw[thick,-latex, dash pattern=on 0.62cm off 0.15cm on 1.07cm off 0.15cm on 100cm] (-1.3,-0.6) to (1.3,-0.6);

\draw[thick,<->] (1.3,0) to (2.3,0);

\draw[thick,-latex] (2.5,-1) to (3.5,0); \draw[thick,-latex] (4.5,-1) to (3.5,0); \draw[thick,-latex] (3.5,1)to (3.5,0);
\draw[thick,-latex, dash pattern=on 1.24cm off 0.15cm  on 100cm] (2.2,0.6) to (4.8,0.6);

\node at (0,-1.5) {$D$};
\node at (3.5,-1.5) {$D'$};
\end{tikzpicture}
\caption{A Reidemeister move of type IV for an oriented spatial web}\label{RIVDD'}
\end{figure}
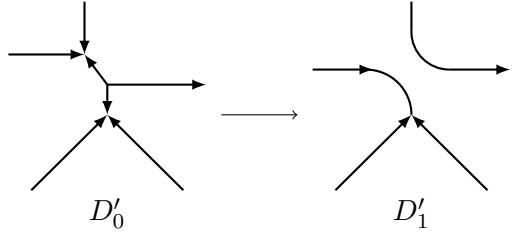
\begin{figure}
\centering
% Cube resolution for RIVD'
\begin{tikzpicture}

\draw[thick,-latex] (-1.5,-1) to (-0.5,0); \draw[thick,-latex] (0.5,-1) to (-0.5,0); \draw[thick,-latex] (-0.8,1.5)to (-0.8,0.8);
\draw[thick,-latex] (-1.8,0.8) to (-0.8,0.8);  \draw[thick,-latex] (-0.5,0.4) to (0.8,0.4); \draw[thick,-latex] (-0.5,0.4) to (-0.5,0);
\draw[thick,-latex] (-0.5,0.4) to (-0.8,0.8);

\draw[thick,-latex] (2.5,-1) to (3.5,0); \draw[thick,-latex] (4.5,-1) to (3.5,0); \draw[thick] (3.5,1.5)to (3.5,1.1);
\draw[thick,-latex] (2.2,0.6) to (3,0.6);  \draw[thick,-latex] (4,0.6) to (4.8,0.6);

\draw[thick] (2.9,0.6) to [out=0,in=90] (3.5,0);
\draw[thick] (3.5,1.1) to [out=-90,in=180] (4,0.6);

\draw[->] (1,0) to (2,0);

\node at (-0.5,-1.3) {$D_0'$};
\node at (3.5,-1.3) {$D_1'$};

\end{tikzpicture}
\caption{The (1-dimensional) cube of the two resolutions of $D'$}\label{RIV-cube-D'}
\end{figure}
\begin{figure}
\centering
% Cube resolution for RIVD
\begin{tikzpicture}
\tikzset{middlearrow/.style={
        decoration={markings,
            mark= at position 0.8 with {\arrow{#1}} ,
        },
        postaction={decorate}
    }
}

%% D00
\draw[thick,-latex] (0,1)to (0,0);
\draw[thick] (-1,-1) to (-0.7,-0.7); \draw[thick,-latex] (-0.3,-0.3) to (0,0);
\draw[thick] (1,-1) to (0.7,-0.7);   \draw[thick,-latex]  (0.3,-0.3) to (0,0);

\draw[thick] (-1.3,-0.6) to (-0.8,-0.6); \draw[thick] (-0.3, -0.6) to (0.3,-0.6); \draw[thick,-latex] (0.8,-0.6) to (1.3,-0.6);

\draw[thick] (-0.8,-0.6) to [out=0, in=225] (-0.3,-0.3);
\draw[thick] (-0.7,-0.7) to [out=45, in=180] (-0.3, -0.6);
\draw[thick] (0.7,-0.7) to [out=135, in=180] (0.8,-0.6);
\draw[thick] (0.3,-0.6) to [out=0,in=-45] (0.3,-0.3);

%% D10
\draw[thick,-latex] (4,1)to (4,0);
\draw[thick,middlearrow={latex}] (3,-1) to (3.3,-0.7); \draw[thick,-latex] (3.7,-0.3) to (4,0);
\draw[thick] (5,-1) to (4.7,-0.7);   \draw[thick]  (4.3,-0.3) to (4,0);

\draw[thick,middlearrow={latex}] (2.7,-0.6) to (3.2,-0.6); \draw[thick,middlearrow={latex}] (3.7, -0.6) to (4.3,-0.6); \draw[thick,-latex] (4.8,-0.6) to (5.3,-0.6);

\draw[thick] (4.7,-0.7) to [out=135, in=180] (4.8,-0.6);
\draw[thick] (4.3,-0.6) to [out=0,in=-45] (4.3,-0.3);

\draw[thick] (4-0.8,-0.6) to [out=0,in=45] (4-0.7,-0.7) ;
\draw[thick] (4-0.3,-0.3) to [out=-135,in=180] (4-0.3, -0.6);

\draw[thick,middlearrow={latex}] (3.64, -0.525) to (4-0.7,-0.65);

%% D01
\draw[thick,-latex] (0,-2)to (0,-3);
\draw[thick] (-1,-4.3) to (-0.7,-4); \draw[thick,-latex] (-0.3,-3.3) to (0,-3);
\draw[thick,middlearrow={latex}] (1,-4.3) to (0.7,-4);   \draw[thick,-latex]  (0.3,-3.3) to (0,-3);

\draw[thick] (-1.3,-3.6) to (-0.8,-3.6); \draw[thick,middlearrow={latex}] (-0.3, -3.9) to (0.3,-3.9); \draw[thick,-latex] (0.8,-3.6) to (1.3,-3.6);

\draw[thick] (-0.8,-3.6) to [out=0, in=225] (-0.3,-3.3);
\draw[thick] (-0.7,-4) to [out=45, in=180] (-0.3, -3.9);

\draw[thick] (0.7,-4) to [out=135, in=180] (0.3,-3.9);
\draw[thick] (0.8,-3.6) to [out=180,in=-45] (0.3,-3.3);

\draw[thick,middlearrow={latex}]    (0.62,-3.57) to (0.57,-3.95);

%% D11
\draw[thick,-latex] (4,-2)to (4,-3);
\draw[thick,middlearrow={latex}] (3,-4.3) to (4-0.7,-4); \draw[thick,-latex] (4-0.3,-3.3) to (4,-3);
\draw[thick,middlearrow={latex}] (5,-4.3) to (4.7,-4);   \draw[thick,-latex]  (4.3,-3.3) to (4,-3);

\draw[thick,middlearrow={latex}] (2.7,-3.6) to (4-0.8,-3.6); \draw[thick,middlearrow={latex}] (4-0.3, -3.9) to (4.3,-3.9); \draw[thick,-latex] (4.8,-3.6) to (5.3,-3.6);

\draw[thick] (4-0.3, -3.9) to [out=180, in=225] (4-0.3,-3.3);
\draw[thick] (4-0.7,-4) to [out=45, in=0] (4-0.8,-3.6) ;

\draw[thick] (4.7,-4) to [out=135, in=180] (4.3,-3.9);
\draw[thick] (4.8,-3.6) to [out=180,in=-45] (4.3,-3.3);
\draw[thick,middlearrow={latex}]    (4.62,-3.57) to (4.57,-3.95);

\draw[thick,->]   (3.56,-3.75) to (3.368,-3.85);

\draw[->]  (1.5,0) to  (2.5,0);
\draw[->]  (1.5,-3) to (2.5,-3);
\draw[->]  (0,-1) to (0,-1.7);
\draw[->]  (4,-1) to (4,-1.7);

\node at (-1,0.7) {$D_{00}$};
\node at (3,0.7) {$D_{10}$};
\node at (-1,-2.5) {$D_{01}$};
\node at (3,-2.5) {$D_{11}$};

\end{tikzpicture}
\caption{The cube of 4 resolutions of $D$.}\label{RIV-cube-D}
 \end{figure}
 \begin{figure}
 \begin{tikzpicture}
\tikzset{middlearrow/.style={
        decoration={markings,
            mark= at position 0.8 with {\arrow{#1}} ,
        },
        postaction={decorate}
    }
}

%% D10
\draw[thick,middlearrow={latex}] (4,1)to (4,0);
\draw[thick,middlearrow={latex}] (3,-1) to (3.3,-0.7);
\draw[thick] (5,-1) to (4.7,-0.7);  

\draw[thick,middlearrow={latex}] (2.7,-0.6) to (3.2,-0.6);  \draw[thick,-latex] (4.8,-0.6) to (5.3,-0.6);

\draw[thick] (4.7,-0.7) to [out=135, in=180] (4.8,-0.6);

\draw[thick] (4-0.8,-0.6) to [out=0,in=45] (4-0.7,-0.7) ;

\draw[thick] (4,0) to [out=-90,in=45] (4-0.7,-0.65);

\node at (4,-1.3) {$D'_{10}$};
\end{tikzpicture}
\caption{Removing the digon from $D_{10}$.}\label{D10'}
\end{figure}
\begin{proof}
By Figure \ref{ind-cube}, we have
\begin{equation}\label{D'-map}
  F(D')=\textbf{Tot}[F(D'_0)\{3\}\to F(D'_1)\{2\}][-1]
\end{equation}
where $D_0'$ and $D_1'$ are the diagrams in Figure \ref{RIV-cube-D'}. Similarly, we have $F(D)$ is the total complex of the 2-dimensional cube
\begin{equation*}
\xymatrix{
  F(D_{00})\{-4\} \ar[d]_{g_1} \ar[r]^{f_1} & F(D_{10})\{-5\}\ar[d]^{g_2} \ar@{=}[r] &F(D'_{10})\{-4\}\oplus F(D'_{10})\{-6\} \\
  F(D_{01})\{-5\} \ar[r]^{f_2} & F(D_{11})\{-6\} \ar@{=}[r] & F(D'_{10})\{-6\}\oplus F(D'_1)\{-6\}
  }
\end{equation*}
where $D_{ij}$ are diagrams in Figure \ref{RIV-cube-D}.

By Lemma \ref{digon-homology}, we can identify $F(D_{10})\{-5\}$ with 
$$
F(D'_{10})\{-4\}\oplus F(D'_{10})\{-6\}
$$ 
where $D'_{10}$ is the diagram in Figure \ref{D10'}.
Notice that $D_{10}'$ can be deformed into $D_{00}$.
 By the proof of Proposition \ref{RVD=D'},
we have the composition of the projection $\pi:F(D_{10})\{-5\}\to F(D'_{10})\{-4\}$ and $f_1$ is the identify map.

By Lemma \ref{square-homology} we have the identification
\begin{equation*}
  F(D_{11})\{-6\}=F(D'_{10})\{-6\}\oplus F(D'_1)\{-6\}
\end{equation*}
The second part of Lemma \ref{digon-homology} tells us that the map induced by the digon cobordism
$\overline{\Sigma}_{d1}: D_{10}'\to D_{10}$ is an embedding
$$F(D_{10}')\{-6\}\cong F(D_{10}')\{-6\}\subset F(D_{10})\{-5\}$$
It is not hard to see that the composition of the cobordisms $D_{10}\to D_{11}$ and $\overline{\Sigma}_{d1}: D_{10}'\to D_{10}$ is exactly the
cobordism $\Sigma_{sq}$ in Figure \ref{square-co}. Therefore by Lemma \ref{square-homology} we have the summand $F(D'_{10})\{-6\}$ of
$F(D_{10})\{-5\}$ maps isomorphically onto the summand $F(D'_{10})\{-6\}$ of $F(D_{11})\{-6\}$ under $g_2$.
Now we define a subcomplex $S$ of the cube $F(D)$ in Figure \ref{RIV-cube-D}
as the direct sum of
\begin{itemize}
  \item $F(D_{00})\{-4\}$,
  \item $F(D'_{10})\{-6\}\subset F(D_{10})\{-5\}$ and $F(D'_{10})\{-6\}\subset F(D_{11})\{-6\}$,
  \item $\Imm (f_1\oplus g_1) \subset  F(D_{10})\{-5\}\oplus F(D_{01})\{-5\}$.
\end{itemize}
Our knowledge on $f_1$ and $g_2$ implies that this subcomplex is null-homotopic and the quotient complex $F(D)/S$ is isomorphic to
\begin{equation}\label{final-map}
  \textbf{Tot}[F(D_{01})\{-5\} \to F(D_1')\{-6\}] [1]
\end{equation}
which is a quotient of the bottom edge in Figure \ref{RIV-cube-D}. Notice that $D_{01}\cong D_0'$. Moreover, using Lemma \ref{square-homology} and
Figure \ref{compo-digon-D'D1}, one can show that the map in \eqref{final-map} and the map in \eqref{D'-map} are induced by the same cobordism.
Hence either two maps are the same or differs by a sign, which implies
\begin{equation*}
  F(D')\simeq _h F(D)\{8\}[-2]
\end{equation*}
This completes the proof.
\end{proof}
We can state our main result for this subsection:
\begin{THE}
Given two diagrams $D$ and $D'$ of an oriented spatial web $\Gamma$, 
the two chain complexes $F(D)$ and $F(D')$ are chain homotopy equivalent up to a shifting of the bi-grading.
In particular, the
 \emph{relatively} bi-graded abelian group
$\mathcal{H}(\Gamma):=H(F(D))$ is a well-defined invariant of the oriented spatial web.
\end{THE}
\begin{rmk}
We lose the absolute bi-grading in the situation of webs. But from the proofs of Propositions \ref{RVD=D'} and \ref{RIVD=D'}, we see that we still have an absolute
$\mathbb{Z}/4$ quantum grading.
\end{rmk}
\subsection{The module structure}\label{module-str}
In this subsection we first review a module structure on $F(\Gamma)$ where $\Gamma$ is an oriented planar web discussed in \cite{Kh-sl3}*{Section 6}. 
Then we generalize
it for oriented spatial webs.

Let $U$ be a circle in $\mathbb{R}^2$, then the pair of pants cobordism equips $F(U)$ with a ring structure. From \cite{Kh-sl3} we have
\begin{equation*}
  F(U)=\mathbb{Z}[X]/ X^3
\end{equation*}
where the (quantum) gradings of $1, X, X^2$ are $-2,0,2$ respectively. So the multiplication action $X: F(U) \to F(U)$ is a degree 2 operator.
Let $D_i$ be a disk with $i$ dots on it. We think $D$ as a cobordism from the empty set
to a circle. The generators of $F(U)$ can be described as
\begin{equation*}
  X^i=F(D_i)
\end{equation*}
More generally, let $\Gamma$ be an oriented planar web. Merging a circle near an edge $e_i$ gives a cobordism from $U\sqcup \Gamma$ to $\Gamma$, which induces
an action $X_i: F(\Gamma)\to F(\Gamma)$. Alternatively, the map $X_i$ can be defined as the map induced by the product cobordism
$\Gamma_i\times I$ with a dot in the interior of $e_i\times I$. Those actions make $F(\Gamma)$ into a $R_\Gamma$ module where $R_\Gamma$ is the
commutative graded ring generated by variables 
$X_i$ 
with relations
\begin{equation}\label{ring1}
  X_i+X_i+X_k=0, X_iX_j + X_j X_k + X_k X_i=0, X_i X_j X_k=0
\end{equation}
whenever $e_i, e_j, e_k$ are incident at a common vertex and
\begin{equation}\label{ring2}
  X_i^3=0
\end{equation}
when $e_i$ is a loop. The degree of $X_i$ is defined to be 2.

Now we want to generalized the above module structure for an oriented spatial web $\Gamma$. The following argument 
is adapted from \cite{Kh-module}.
Let $D$ be a diagram of $\Gamma$ and $p\in \Gamma$ be a point in the interior of
edge $e_i$ disjoint from all the crossings. 
By merging a circle at $p$, we can define a map $X_i$ on each summand of $F(D)$ as before. Put all those map together,
we obtain a chain map
\begin{equation*}
  X_p: F(D)\to F(D)
\end{equation*}
which induces a map of bi-degree  $(0,2)$ on $\mathcal{H}(\Gamma)$. If another diagram $D'$ differs from $D$ by a Reidemeister move, then
there is a chain homotopy equivalence (possibly shifting the bi-grading) between $F(D)$ and $F(D')$. We can also assume the 
move does not touch $p$:
a move of an arc above or under $p$ can be made into a move of the arc across the rest of the plane or $S^2$. Under this assumption,
the chain homotopy equivalence commutes with the $X_p$ action. Therefore 
$$X_i:=X_{p,\ast}: \mathcal{H}(\Gamma)\to  \mathcal{H}(\Gamma)$$
 is a well-defined operator which only depends on the edge $e_i$. 
 The homology 
$\mathcal{H}(\Gamma)$ is a $R_\Gamma$-module where $R_\Gamma$ is defined in the same way as before. Notice that $R_\Gamma$ only depends on the underlying
abstract web of $\Gamma$ and does not depend on the embedding.

We have the following example from \cite{Kh-sl3}*{Section 6}:

\begin{eg} Suppose $\Gamma$ is an oriented planar web which can be reduced into circles by removing digons in Figure \ref{digon-webs}.
Using induction and Lemma \ref{digon-homology}, one can show that $\mathcal{H}(\Gamma)$ is a free $R_\Gamma$-module of rank 1.
\end{eg}

\subsection{Pointed webs}\label{pointed-web}
Suppose $\Gamma$ is an oriented spatial web and 
$$\boldsymbol{\delta}=\{\delta_i|i=1,\cdots,m\}$$ 
is a collection of mark points on $\Gamma\setminus V(\Gamma)$ where
$V(\Gamma)$ is the set of vertices of $\Gamma$. The pair $(\Gamma,\boldsymbol{\delta})$ is called a pointed (oriented spatial) web.
In \cite{BLS}, the ($\mathfrak{sl}(2)$) Khovanov homology for a pointed link is defined. 
We want to define a homological invariant for $(\Gamma,\boldsymbol{\delta})$ by imitating \cite{BLS}.  

Take a diagram $D$ for $\Gamma$ as before and assume $\boldsymbol{\delta}$ is disjoint from the crossings.
Let $\Lambda_{\boldsymbol{\delta}}$ be the exterior algebra generated by formal variables $\{x_i\}$. The bi-degree 
(homological degree and quantum degree) of 
$x_i$ is $(1,2)$.
We define a bi-graded chain complex
\begin{equation}\label{PKh}
C(D,\boldsymbol{\delta}):=\Lambda_{\boldsymbol{\delta}} \otimes_{\mathbb{Z}} F(D)
\end{equation} 
with differential
\begin{equation*}
d_{\boldsymbol{\delta}}(x\otimes y ):=(-1)^{\degg_h(x)}x\otimes d(y) + \sum_{i}x_i\wedge x \otimes X_{\delta_i}(y)
\end{equation*} 
where $d$ is the Khovanov differential on  $F(D)$, $X_{\delta_i}$ is the operator defined in Section \ref{module-str} and
$\degg_h$ denotes the homological degree.  From \eqref{PKh} we know $C(D,\boldsymbol{\delta})$ carries a 
$\Lambda_{\boldsymbol{\delta}}$-module structure. It is not hard to check that the differential on 
$\Lambda_{\boldsymbol{\delta}}$ respects this module structure, hence the homology of $\Lambda_{\boldsymbol{\delta}}$ is
a $\Lambda_{\boldsymbol{\delta}}$-module.

 Alternatively, this chain complex
can be described as the total complex of a cube as the definition of $F(D)$. Index the vertices of a $m$-dimensional cube by
subsets $S\subset \{1,\cdots,m\}$. Assign a copy of $F(D)$ to each vertex and assign the map $X_{\delta_i}$
to the edge $S\to S\cup \{i\} $ where $i\notin S$. After adding plus or minus sign to those edge maps appropriately, 
the total complex becomes 
the chain complex $(C(D,\boldsymbol{\delta}),d_{\boldsymbol{\delta}} )$. Equivalently, 
$(C(D,\boldsymbol{\delta}),d_{\boldsymbol{\delta}} )$ can be defined as the iterated total complex (mapping cone)
 of maps $X_{\delta_i}$. 

\begin{THE}
Given two diagrams $D$ and $D'$ of an pointed oriented spatial web $(\Gamma,\boldsymbol{\delta})$, 
the two chain complexes $C(D,\boldsymbol{\delta})$ and 
$C(D',\boldsymbol{\delta})$ are chain homotopy equivalent (as $\Lambda_{\boldsymbol{\delta}}$-modules) 
up to a shifting of the bi-grading.
In particular, the
 \emph{relatively} bi-graded $\Lambda_{\boldsymbol{\delta}}$-module
$\mathcal{H}(\Gamma,\boldsymbol{\delta}):=H(C(D,\boldsymbol{\delta}))$ 
is a well-defined invariant of $(\Gamma,\boldsymbol{\delta})$.
\end{THE}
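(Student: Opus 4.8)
The plan is to reduce the statement to the case that $D$ and $D'$ differ by a single Reidemeister move (of one of the five types), and then to promote the homotopy equivalence $F(D)\simeq_h F(D')$ proved in the unpointed case to a homotopy equivalence of the twisted complexes $C(D,\boldsymbol{\delta})$ and $C(D',\boldsymbol{\delta})$ by tensoring it — \emph{together with} its homotopy inverse and the two homotopies — with the identity of $\Lambda_{\boldsymbol{\delta}}$. For the resulting tensored data to be a chain homotopy equivalence for the twisted differential $d_{\boldsymbol{\delta}}$, the crucial input is that the underlying maps on $F(D)$ commute \emph{strictly} with each operator $X_{\delta_i}$; supplying this is where the real work lies.

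First I would arrange the geometry: since each $\delta_i$ lies in the interior of an edge, one may isotope $\Gamma$ (sliding the $\delta_i$ along their edges, past the at most one vertex involved in the move) so that the Reidemeister move takes place inside a disk $B\subset\mathbb{R}^2$ disjoint from $\boldsymbol{\delta}$; this changes neither $(\Gamma,\boldsymbol{\delta})$ up to isotopy nor the complex, and afterwards $D$ and $D'$ agree outside $B$ with the $\delta_i$ in unchanged positions. Next I would inspect the homotopy equivalence data $(\phi,\psi,H,H')$ produced by the proof of invariance of $F(D)$ under the given move — Khovanov's argument for $R\,\mathrm{I}$–$R\,\mathrm{III}$, and Propositions \ref{RVD=D'}, \ref{RIVD=D'} (which rest on Lemmas \ref{digon-homology}, \ref{square-homology}) for $R\,\mathrm{IV}$ and $R\,\mathrm{V}$ — and verify that every ingredient commutes with $X_{\delta_i}$. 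Each ingredient is of one of two kinds: (i) a map induced by a cobordism supported in $B\times[0,1]$ (a product elsewhere): the cube differentials, the skein cobordisms, and the digon/square (co)isomorphisms of Lemmas \ref{digon-homology}, \ref{square-homology}; since the dotted product cobordism defining $X_{\delta_i}$ is supported in a collar of $\delta_i$ disjoint from $B$, the two cobordisms have disjoint support, hence the induced maps commute; (ii) a purely algebraic Gaussian elimination (the $\textbf{Tot}$-reductions and cancellations of acyclic subcomplexes used in the proofs of \ref{RVD=D'}, \ref{RIVD=D'}). For (ii) I would record the elementary fact that Gaussian elimination is functorial for complexes of $\mathbb{Z}[X_{\delta_i}]$-modules: if all differentials and the cancelled isomorphism are $X_{\delta_i}$-linear, then so are the reduced differentials, the transfer maps, and the arising homotopies — one uses only that the inverse of an $X_{\delta_i}$-linear isomorphism is again $X_{\delta_i}$-linear. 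Combining (i) and (ii), the maps $\phi,\psi$ and the homotopies $H,H'$ can be taken $X_{\delta_i}$-linear simultaneously for all $i$.

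With $\phi,\psi,H,H'$ commuting with every $X_{\delta_i}$, I would set $\widetilde{\phi}:=\idd_{\Lambda_{\boldsymbol{\delta}}}\otimes\phi$, $\widetilde{\psi}:=\idd_{\Lambda_{\boldsymbol{\delta}}}\otimes\psi$, and $\widetilde{H}(x\otimes y):=(-1)^{\degg_h(x)}\,x\otimes H y$, with $\widetilde{H}'$ analogous. A short computation with the defining formula for $d_{\boldsymbol{\delta}}$ shows that $\widetilde{\phi},\widetilde{\psi}$ are chain maps and $\widetilde{H},\widetilde{H}'$ are homotopies witnessing $\widetilde{\psi}\widetilde{\phi}\simeq\idd$ and $\widetilde{\phi}\widetilde{\psi}\simeq\idd$; the point is that the potential error terms are precisely $\sum_i x_i\wedge x\otimes(X_{\delta_i}\phi-\phi X_{\delta_i})y$ and $\sum_i x_i\wedge x\otimes(X_{\delta_i}H-H X_{\delta_i})y$, which vanish by the commutation just established. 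Since these maps act as the identity on the $\Lambda_{\boldsymbol{\delta}}$-tensor factor, they are automatically $\Lambda_{\boldsymbol{\delta}}$-linear; and the bi-grading shift is inherited verbatim from the unpointed case, the exterior generators $x_i$ retaining their fixed bi-degree $(1,2)$. Composing over the sequence of moves joining $D$ to $D'$ yields the theorem, and hence that $\mathcal{H}(\Gamma,\boldsymbol{\delta})$ is a well-defined relatively bi-graded $\Lambda_{\boldsymbol{\delta}}$-module.

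The hard part — the only step requiring more than bookkeeping — is establishing that the \emph{homotopies} $H,H'$, and not merely the quasi-isomorphisms $\phi,\psi$ (which is all Section \ref{module-str} extracts for the module structure), commute with $X_{\delta_i}$; concretely this means tracking $X_{\delta_i}$-linearity through the Gaussian-elimination reductions inside the proofs of Propositions \ref{RVD=D'} and \ref{RIVD=D'}. An alternative phrasing, using that $C(D,\boldsymbol{\delta})$ is the iterated mapping cone of the $X_{\delta_i}$ as in Section \ref{pointed-web}, is to invoke the general principle that a homotopy equivalence intertwining a tuple of commuting endomorphisms induces a homotopy equivalence of iterated cones; but making that principle precise requires the same commutation input, so I would keep the explicit $\idd_{\Lambda_{\boldsymbol{\delta}}}\otimes(-)$ description above.
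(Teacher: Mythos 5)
Your proposal is sound, but it takes a genuinely different (and more laborious) route than the paper. The paper's proof only requires the \emph{chain maps} realizing $F(D)\simeq_h F(D')$ to commute strictly with each $X_{\delta_i}$ (which is exactly what the discussion in Section \ref{module-str} provides, after arranging the move away from $\boldsymbol{\delta}$); it then views $C(D,\boldsymbol{\delta})$ as the iterated mapping cone of the $X_{\delta_i}$ and invokes the standard fact that a strictly commuting square whose vertical arrows are homotopy equivalences induces a homotopy equivalence of cones (two-out-of-three for distinguished triangles in the homotopy category), iterating once per marked point. No control of the homotopies $H,H'$ is needed, so your closing remark that making the iterated-cone principle precise ``requires the same commutation input'' is not accurate --- the cone argument needs strictly less, and that is precisely what the paper exploits. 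What your route buys, if carried out, is an explicitly $\Lambda_{\boldsymbol{\delta}}$-linear homotopy inverse and explicit homotopies (the paper only exhibits the forward equivalence as compatible with the cube-shifting module structure), at the cost of re-opening the proofs of Propositions \ref{RVD=D'} and \ref{RIVD=D'} (and Khovanov's R\,I--R\,III arguments) to check that the Gaussian-elimination steps can be performed $X_{\delta_i}$-equivariantly; your two observations --- disjointly supported cobordisms commute, and elimination of an $X$-linear acyclic piece yields $X$-linear reduced data --- do make that plausible, but they constitute extra work the paper deliberately avoids. One small imprecision: you cannot slide a marked point ``past a vertex'' of its edge; as in Section \ref{module-str}, one instead moves the \emph{arc} involved in the Reidemeister move across the rest of $S^2$ so that the move region misses $\boldsymbol{\delta}$, which achieves the same normalization.
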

\begin{proof}
Suppose $D$ and $D'$ differ by a Reidemeister move. By the discussion in Section \ref{module-str},
we can assume this move does not touch $\boldsymbol{\delta}$ and the diagram
\begin{equation*}
\xymatrix{
  F(D) \ar[d] \ar[r]^{X_{\delta_i}} & F(D) \ar[d] \\
  F(D') \ar[r]^{X_{\delta_i}} & F(D')   }
\end{equation*}  
commutes. The two vertical maps are the chain homotopy equivalence between $F(D)$ and $F(D')$. Therefore the mapping cone
of the two horizontal maps are also chain homotopy equivalent. Since $C(D,\boldsymbol{\delta})$ and
$C(D',\boldsymbol{\delta})$ are defined as the iterated mapping cone of maps $X_{\delta_i}$ ($\delta_i\in \boldsymbol{\delta} $),
iterating the above argument shows $C(D,\boldsymbol{\delta})$ and
$C(D',\boldsymbol{\delta})$ are chain homotopy equivalent.  The $\Lambda_{\boldsymbol{\delta}}$-module structure is just 
the shifting of vertices of the cube, which commutes with the chain homotopy equivalence. 
\end{proof}

We can filter the cube used to define $C(D,\boldsymbol{\delta})$ by the cardinal $|S|$. Then we obtain a spectral sequence
whose $E_1$-page is $2^m$ copies of $\mathcal{H}(\Gamma)$. The differential on the $E_1$-page consists of maps 
$X_i=X_{\delta_i,\ast}:\mathcal{H}(\Gamma) \to \mathcal{H}(\Gamma)$.
The maps $X_i$ can be viewed as elements in $R_\Gamma$. Two elements $X_i$ and $X_j$ are equal if $\delta_i$ and $\delta_j$ lie on
the same edge. Let $s=(X_1,\cdots, X_m)\in R_\Gamma^{\oplus m}$, then the $E_1$-page is just the Koszul complex
$K(s,\mathcal{H}(\Gamma))$.
To be more precise, we first define a chain complex
\begin{equation*}
\xymatrix{
  K(s):=0 \to  R_\Gamma \ar[r]^-{\wedge s} & \Lambda^1 R_\Gamma^{\oplus m} \ar[r]^-{\wedge s} & \Lambda^2 R_\Gamma^{\oplus m}
  \ar[r]^-{\wedge s} & \cdots \ar[r]^-{\wedge s} & \Lambda^m R_\Gamma^{\oplus m}
  \to 0
}
\end{equation*}
Then the Koszul complex is defined as
\begin{equation*}
K(s,\mathcal{H}(\Gamma)):=K(s)\otimes_{R_\Gamma}\mathcal{H}(\Gamma)
\end{equation*}
In summary, we have
\begin{PR}\label{koszul}
Suppose $(\Gamma,\boldsymbol{\delta}=\{\delta_i\}_{1\le i \le m})$ is a pointed oriented spatial web and 
 $s=(X_1,\cdots, X_m)\in R_\Gamma^{\oplus m}$ as above. Then there is a spectral sequence converging to 
 $\mathcal{H}(\Gamma,\boldsymbol{\delta})$ whose $E_1$-page is the Koszul complex $K(s,\mathcal{H}(\Gamma))$.
 In particular, we have
 \begin{equation*}
 \rank_{\mathbb{Z}} H(K(s,\mathcal{H}(\Gamma)))\ge \rank_{\mathbb{Z}} \mathcal{H}(\Gamma,\boldsymbol{\delta})
 \end{equation*} 
\end{PR}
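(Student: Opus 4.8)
The plan is to produce the spectral sequence directly from the cube description of $C(D,\boldsymbol{\delta})$ given just above the proposition, and then to read off the rank inequality by the usual comparison of consecutive pages. Fix a diagram $D$ of $\Gamma$ with $\boldsymbol{\delta}$ disjoint from the crossings, and recall that $C(D,\boldsymbol{\delta})$ is the total complex of the $m$-dimensional cube whose vertex at $S\subseteq\{1,\dots,m\}$ is a copy of $F(D)$ and whose edge $S\to S\cup\{i\}$ carries $\pm X_{\delta_i}$. I would filter this total complex by the decreasing family of subgroups $F^p=\bigoplus_{|S|\ge p}(\text{copy of }F(D)\text{ at }S)$. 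Because the internal Khovanov differential $d$ preserves $|S|$ while every edge map $X_{\delta_i}$ strictly raises $|S|$ by one, each $F^p$ is a subcomplex; the filtration is bounded (of length $m+1$) and exhaustive, so the associated spectral sequence converges strongly to $H(C(D,\boldsymbol{\delta}))=\mathcal{H}(\Gamma,\boldsymbol{\delta})$.

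Next I would identify the low pages. On $E_0$ the edge maps raise filtration and hence vanish in the graded pieces, so the induced differential is $\pm d$ on each vertex and $E_1^p=\bigoplus_{|S|=p}H(F(D))=\bigoplus_{|S|=p}\mathcal{H}(\Gamma)$, which is $\binom{m}{p}$ copies of $\mathcal{H}(\Gamma)$. The induced differential $d_1\colon E_1^p\to E_1^{p+1}$ comes from the part of the total differential that raises filtration by exactly one, namely $\sum_i X_{\delta_i}$, whose effect on homology is by the operators $X_{\delta_i,*}=X_i$ that are well defined on $\mathcal{H}(\Gamma)$ (Section \ref{module-str}). Matching the vertex at $S$ with the basis vector of $\Lambda^{|S|}R_\Gamma^{\oplus m}$ indexed by $S$, and checking that the cube's signs agree with those of $\wedge s$, one sees that $(E_1,d_1)$ is exactly $K(s)\otimes_{R_\Gamma}\mathcal{H}(\Gamma)=K(s,\mathcal{H}(\Gamma))$; in particular $E_2=H(K(s,\mathcal{H}(\Gamma)))$. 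I expect the only mildly delicate point to be this sign bookkeeping, but it is forced: both complexes are the ``cube of the $m$-tuple $(X_1,\dots,X_m)$'' and are determined up to isomorphism by the same data, so any consistent choice of signs making one a complex can be transported to the other.

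Finally, the rank inequality is pure homological algebra. For each $r\ge 1$ the page $E_{r+1}$ is the homology of $(E_r,d_r)$ and hence a subquotient of $E_r$, so $\rank_{\mathbb{Z}}E_{r+1}\le\rank_{\mathbb{Z}}E_r$; since the filtration is finite the sequence degenerates at a finite stage and $\rank_{\mathbb{Z}}E_\infty\le\rank_{\mathbb{Z}}E_2=\rank_{\mathbb{Z}}H(K(s,\mathcal{H}(\Gamma)))$. On the other hand $E_\infty$ is the associated graded of the induced finite filtration on $\mathcal{H}(\Gamma,\boldsymbol{\delta})$, and the free rank of a finitely generated abelian group is additive along short exact sequences, so $\rank_{\mathbb{Z}}E_\infty=\rank_{\mathbb{Z}}\mathcal{H}(\Gamma,\boldsymbol{\delta})$. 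Chaining the two inequalities yields $\rank_{\mathbb{Z}}H(K(s,\mathcal{H}(\Gamma)))\ge\rank_{\mathbb{Z}}\mathcal{H}(\Gamma,\boldsymbol{\delta})$, as claimed.
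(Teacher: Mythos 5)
Your argument is correct and is essentially the paper's own: the paper obtains the spectral sequence by filtering the cube defining $C(D,\boldsymbol{\delta})$ by the cardinality $|S|$, identifies the $E_1$-page with the Koszul complex $K(s,\mathcal{H}(\Gamma))$ via the induced operators $X_i=X_{\delta_i,\ast}$, and deduces the rank inequality in the standard way. You have simply spelled out the details (the decreasing filtration, the sign bookkeeping, and the subquotient/associated-graded rank comparison) that the paper leaves implicit.
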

When $\Gamma$ is a planar web, the spectral sequence degenerates on the $E_2$-page and 
\begin{equation*}
 \mathcal{H}(\Gamma,\boldsymbol{\delta}) \cong H(K(s,\mathcal{H}(\Gamma))) 
\end{equation*}

\begin{eg}  Let $\Theta$ be the planar theta graph and $\boldsymbol\delta =\{\delta_1,\delta_2\}$ where
$\delta_1,\delta_2$ lie on two distinct edges. We have   
\begin{equation*}
 \mathcal{H}(\Theta)\cong R_\Theta \cong \mathbb{Z}[X_1,X_2]/ (X_1^2 X_2+X_1 X_2^2, X_1^2+X_2^2+X_1X_2       )
\end{equation*}
The homology $\mathcal{H}(\Theta,\boldsymbol{\delta})$ is defined as the homology of the cube
\begin{equation*}
\xymatrix{
  R_\Theta \ar[d]_{X_2} \ar[r]^{X_1} & R_\Theta \ar[d]^{-X_2} \\
  R_\Theta \ar[r]^{X_1} & R_\Theta   }
\end{equation*}
It is straightforward to check that $\mathcal{H}(\Theta,\boldsymbol{\delta})$ is a free abelian group of rank 4.
\end{eg}

\section{The functor $J$ and the spectral sequence}
The instanton Floer homology for spatial trivalent graphs is introduced by Kronheimer and Mrowka in \cite{KM-jsharp}. In this section we  first review
some results we need from their work. Then we will show that there is a spectral sequence whose $E_2$-page is the $\mathfrak{sl}(3)$ Khovanov module
of a spatial web and which converges to the instanton Floer homology of this spatial web.
 All the discussion in Section \ref{sl3Kh} can be done with $\mathbb{F}$-coefficients
verbatim
and we use $F(\Gamma;\mathbb{F})$ to denote the functor defined in Section \ref{sl3Kh} adapted with  $\mathbb{F}$-coefficients.

\subsection{Instanton Floer homology for webs}\label{functor-I}
Suppose $\Gamma$ is a trivalent graph embedded in an oriented compact three-manifold $Y$. The pair $\check{Y}=(Y,\Gamma)$ 
can be equipped with an orbifold structure 
and an orbifold Riemannian metric
such that
the local stabilizer group $H_x\subset SO(3)$ is $\mathbb{Z}/2$ (if $x$ is not a vertex) or the Klein 4-group (if $x$ is a vertex). 
We call $\check{Y}$ a \emph{bifold} and the orbifold metric on it a bifold metric. 
In most of the situations of this paper, $Y$ will just be $S^3$ (or $\mathbb{R}^3$).

A bifold connection over $\check{Y}$ is an orbifold $SO(3)$ connection whose asymptotic holonomy around each edge of $\Gamma$ is an order $2$
element in $SO(3)$. The underlying orbifold vector bundle of a bifold connection is called a bifold bundle.  
Adding a Hopf link $H$ 
contained in a ball $U_\mu\subset Y$ disjoint from $\Gamma$, we obtain a new bifold $(Y,\Gamma\cup H)$. 
Let $E_\mu\to U_\mu\setminus H$ be the $SO(3)$ bundle whose $w_2$ is dual to a small arc joining the two components of $H$. 
We use $\mathcal{A}$ to denote the space of bifold connections over bifold bundles which are identified with $E_\mu$ on $U_\mu\setminus \Gamma$.  
The gauge group $\mathcal{G}$ consists of $SO(3)$ gauge transformations $g$ such that the restriction 
$g|_{U_\mu\setminus \Gamma}: E_\mu\to E_\mu$ can be lifted into to a determinant-1 gauge transformation. 
The instanton Floer homology
$J^\sharp(Y,\Gamma)$ is defined as the Morse homology (with $\mathbb{F}$-coefficients) of the Chern-Simons functional on the configuration space 
$\mathcal{B}^\sharp(Y,K)=\mathcal{A}/\mathcal{G}$. If $\Gamma$ is a spatial trivalent graph, then it can also be viewed as a graph in $S^3$
and we just write $J^\sharp(\Gamma)$ for $J^\sharp(S^3,\Gamma)$. See \cite{KM-jsharp} for more details of the definition.

There is a variant of $J^\sharp(Y,\Gamma)$ also introduced in \cite{KM-jsharp}: take $U_\mu$ to be the whole manifold $Y$ and repeat 
other parts of the definition, we obtain an invariant $I^\sharp(Y,\Gamma)$.
 We will not need $I^\sharp$ until the next section. The reason to introduce the Hopf link
$H$ with non-trivial $w_2$ in the definition (for $J^\sharp$ or $I^\sharp$) is to avoid reducible connections. 

\begin{DEF}
A \emph{generalized closed foam} $\Sigma$ in $\mathbb{R}^4$ is a 2-dimensional subcomplex decorated with dots such that  each point $x$ in $\Sigma$ has
a neighborhood (in $\Sigma$) which is modelled on one of the following
\begin{itemize}
  \item A smoothly embedded disk;
  \item The product of an interval and the letter ``Y'';
  \item A cone with vertex $x$ and whose base is the complete graph $K_4$ with four vertices.
\end{itemize}
The circles or arcs consist of points of the second type is called seams as before. Points of the third type are called tetrahedral points. The connected components
of the complement of seams and tetrahedral points are called facets. We also require that the dots lie in the facets.
\end{DEF}
We can also define generalized foams with boundary:
\begin{DEF}
A \emph{generalized foam with boundary} is the intersection of $\mathbb{R}^3\times [0,1]$ and a generalized closed foam $\Sigma$ such that
$\mathbb{R}^3\times\{0\}$ and $\mathbb{R}^3\times\{1\}$ are transversal to the facets and seams of $\Sigma$ and contain no tetrahedral points.
In particular,
$\Gamma_1=\mathbb{R}^3\times\{0\}\cap \Sigma$ and $\Gamma_2=\mathbb{R}^3\times\{1\}\cap \Sigma$ are two spatial trivalent graph.
The generalized foam with boundary can be thought as a cobordism from $\Gamma_1$ to $\Gamma_2$.
\end{DEF}
The above definitions are more general than the ones used in Section \ref{sl3Kh}.

Given a cobordism $\Sigma$
from $\Gamma_1$ to $\Gamma_2$, we can also define a 4-dimensional bifold 
$\check{W}:=(S^3\times \mathbb{R}, \Sigma^+\cup H\times \mathbb{R})$ as in the 3-dimensional case.  
Here $\Sigma^+$ is obtained by adding cylindrical ends to $\Sigma$. The concepts of bifold connections, bifold bundles and configuration spaces
can all be defined for the 4-dimensional case. 
If $\Sigma$ has no dots, there is a map $J^\sharp (\Sigma):J^\sharp (\Gamma_1)\to J^\sharp (\Gamma_2)$ defined by counting points in the 
0-dimensional moduli spaces of 
ASD trajectories of bifold connections over $\check{W}$. 
If there are dots on $\Sigma$, the homology class of the meridian of each dot $\delta_i$ determines 
a cohomology class in $H^1(B^\sharp(S^3\times \mathbb{R},\Sigma^+);\mathbb{F})$, which can be represented by a (real) codimension 1 divisor
$V(\delta_i)$ in the configuration space. The map  $J^\sharp (\Sigma)$ is defined by counting points in the 
0-dimensional cutting-down moduli spaces (cutting down by divisors associated with those dots). 
A closed generalized foam $\Sigma$
can be thought a cobordism from an empty graph to another empty graph and $J^\sharp(\Sigma)$ is a number in $\mathbb{F}$.

We summarize some properties of $J^\sharp$:
\begin{PR}\cite{KM-jsharp}\label{jsharp-p}
$J^\sharp$ satisfies the following properties:
\begin{enumerate}
  \item If $\Sigma$ is a closed orientable surface (embedded standardly in $\mathbb{R}^3\subset \mathbb{R}^4$) decorated with dots, then $F(\Sigma)=0$ unless
      \begin{itemize}
        \item $\Sigma$ is a 2-sphere with two dots, then $F(\Sigma)=1$,
        \item $\Sigma$ is a torus without any dots, then $F(\Sigma)=1$.
      \end{itemize}
  \item If $\Sigma_1$ and $\Sigma_2$ are generalized foams in $\mathbb{R}^4$, then $F(\Sigma_1\sqcup \Sigma_2)=F(\Sigma_1)F(\Sigma_2)$.
  \item Do a surgery along a circle inside a facet of $\Sigma$ to obtain three generalized foams $\Sigma_1, \Sigma_2, \Sigma_3$ as in Figure \ref{Sigma123}. Then
        \begin{equation*}
           F(\Sigma)=F(\Sigma_1)+F(\Sigma_2)+F(\Sigma_3)
        \end{equation*}
  \item Let $\Theta(k_1,k_2,k_3)$ be the theta foam (embedded standardly in $\mathbb{R}^3\subset \mathbb{R}^4$)
        with $k_i$ dots on the $i$-th facet (see Figure \ref{theta-foam}), then
        \begin{equation*}
          F(\Theta(k_1,k_2,k_3))=\left\{
                                   \begin{array}{ll}
                                     1, & \hbox{if $(k_1,k_2,k_3)=(0,1,2)$, up to permutation;} \\
                                     0, & \hbox{otherwise.}
                                   \end{array}
                                 \right.
        \end{equation*}
  \item Suppose $\Gamma_1$ and $\Gamma_2$ are two spatial trivalent graphs locally as shown in Figure \ref{digon-webs} (no orientation is needed now), then
       \begin{equation*}
      J^\sharp(\Gamma_1)=J^\sharp(\Gamma_2)\oplus J^\sharp(\Gamma_2)
       \end{equation*}
  \item Suppose $\Gamma_{sq}$, $\Gamma_0$ and $\Gamma_0'$ are spatial trivalent graphs locally as shown in Figure \ref{square-co} 
     (no orientation is needed), then
       \begin{equation*}
      J^\sharp(\Gamma_{sq})=J^\sharp(\Gamma_0)\oplus J^\sharp(\Gamma_0')
       \end{equation*}
  \item Suppose $U$ is the unknot in $\mathbb{R}^3$, then $J^\sharp(U)\cong \mathbb{F}^3$.
\end{enumerate}
\end{PR}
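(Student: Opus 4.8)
The plan is to match each of the seven items with the corresponding structural result of Kronheimer and Mrowka in \cite{KM-jsharp}, reconciling conventions along the way. Note first that every sign appearing in Proposition \ref{4axiom} is invisible over $\mathbb{F}=\mathbb{Z}/2$, which accounts for the sign differences between Proposition \ref{4axiom} and items (1)--(4) here.

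I would handle the closed-foam evaluations (1)--(4) first. Item (2) is immediate, since a bifold ASD configuration over a disjoint union is a pair of configurations and the cutting-down count factors accordingly. For items (1), (3), (4) the organizing principle is that, once $J^\sharp$ of the unknot is known (item (7)), the restriction of $J^\sharp$ to planar webs and standardly embedded foams becomes a (1+1)-dimensional TQFT with underlying Frobenius algebra $F(U;\mathbb{F})=\mathbb{F}[X]/X^3$, of exactly the shape used in \cite{Kh-sl3}. Item (3) is then the image under this TQFT of the obvious factorization of the product cobordism on an annulus through the three capped cobordisms; item (4) is the resulting theta-foam evaluation, where over $\mathbb{F}$ the two nonzero cyclic classes of Proposition \ref{4axiom}(4) collapse, so only $(0,1,2)$ up to arbitrary permutation survives; and item (1) is the standard genus-$g$ surface computation in this Frobenius algebra, which vanishes for $g\ge 2$ because the handle element squares to zero in $\mathbb{F}[X]/X^3$, and which leaves only the torus and the sphere with two dots, both evaluating to $1$. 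The one ingredient genuinely imported from \cite{KM-jsharp} here is functoriality of $\Sigma\mapsto J^\sharp(\Sigma)$ under composition of dotted foam cobordisms, together with the fact that a cobordism map is detected by its closures; this is the divisor/cutting-down bookkeeping carried out there.

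Items (5) and (6), the digon and square decompositions, I would then deduce exactly as Lemmas \ref{digon-homology} and \ref{square-homology} are deduced from Proposition \ref{4axiom}: the neck-cutting and theta relations of (1)--(4) furnish explicit foam cobordisms in both directions between $\Gamma_1$ and $\Gamma_2$, and between $\Gamma_{sq}$ and $\Gamma_0\sqcup\Gamma_0'$, whose composites reduce --- via those same relations --- to the identity or to the coordinate projections; functoriality then upgrades this to the asserted direct-sum splittings, and no orientations enter because relations (1)--(4) do not use them. Finally, item (7), $J^\sharp(U)\cong\mathbb{F}^3$, is Kronheimer--Mrowka's computation of the instanton homology of the unknot: one analyzes the critical set of the Chern--Simons functional on $\mathcal{B}^\sharp(S^3,U)$ with its prescribed nontrivial $w_2$, perturbs to nondegeneracy, identifies the generators with an explicit set of three, and checks that the differential vanishes --- the same mechanism that yields $I^\sharp(U)\cong\mathbb{F}^3$. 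This is the one genuinely deep input, and I would cite it rather than reprove it; correspondingly, the main obstacle is precisely this unknot calculation, everything else being a formal consequence of it together with the functoriality of $J^\sharp$ on foams.
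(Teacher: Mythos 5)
The paper gives no argument for this proposition at all: it is a verbatim summary of results from \cite{KM-jsharp}, where each item is established by its own instanton-theoretic argument (excision, moduli-space computations for the unknot, the theta foams, the surgery relation, and the digon/square decompositions). So the relevant question is whether your proposed rederivation of (1)--(6) from (7) plus functoriality is sound, and it is not. First, knowing $\rank_{\mathbb{F}} J^\sharp(U)=3$ does not give you a $(1+1)$-dimensional TQFT with Frobenius algebra $\mathbb{F}[X]/X^3$: the multiplication (pair of pants), the dot operator and its cubic relation, and the trace (the sphere evaluations in (1)) are separate computations in \cite{KM-jsharp}, not formal consequences of the dimension count. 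Worse, the theta-foam evaluation (4) involves a seam and is simply not determined by the closed-surface Frobenius algebra; in \cite{KM-jsharp} it is an independent evaluation, essentially equivalent to knowing the trilinear pairing on $J^\sharp$ of the theta web, so deriving (4) from ``the TQFT of the circle'' is circular.

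Second, your key bridging step --- ``a cobordism map is detected by its closures'' --- is not available here. For the universal-construction functor $F$ of \cite{Kh-sl3} that is true by definition, but $J^\sharp(\Gamma)$ for a spatial web is Floer homology, and there is no a priori reason that $J^\sharp(\Gamma)$ is spanned by classes of foams from the empty web; for planar webs this is exactly Proposition \ref{J=F} of the present paper, whose proof already uses items (5), (6) and (7), so invoking it to establish (5) and (6) is circular, and for non-planar spatial webs it is not even asserted. What (5) and (6) actually require is the surgery/neck-cutting relation (3) and the small closed evaluations promoted to identities between \emph{induced maps} on Floer homology of foams with boundary; \cite{KM-jsharp} proves these map-level relations directly by analyzing moduli spaces, and only then does the formal digon/square argument in the style of Lemmas \ref{digon-homology} and \ref{square-homology} go through. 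In short, the correct treatment is the paper's: quote all seven items from \cite{KM-jsharp}, rather than attempt to bootstrap (1)--(6) from the unknot computation.
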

\begin{COR}
With $\mathbb{F}$-coefficients,  $J^\sharp$ satisfies the four axioms in Proposition \ref{4axiom} for oriented foams defined 
in Section \ref{sl3Kh}. Therefore $J^\sharp(\Sigma)=F(\Sigma;\mathbb{F})$ for any closed oriented foam $\Sigma$.
\end{COR}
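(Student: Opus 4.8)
The plan is to deduce this from the uniqueness clause of Proposition \ref{4axiom}, run over the field $\mathbb{F}$ in place of $\mathbb{Z}$. First I would check that reducing the four axioms that characterize $F$ modulo $2$ yields exactly the statements recorded in items (1)--(4) of Proposition \ref{jsharp-p}. Indeed, the sphere-with-two-dots value $-1$ and the torus value $3$ both become $1$; the surgery relation $-F(\Sigma)=F(\Sigma_1)+F(\Sigma_2)+F(\Sigma_3)$ becomes $F(\Sigma)=F(\Sigma_1)+F(\Sigma_2)+F(\Sigma_3)$; and in the theta-foam relation the configurations $(0,1,2)$ and $(0,2,1)$ up to cyclic permutation --- which together exhaust all permutations of $\{0,1,2\}$ --- both give $1$, while every other dot configuration gives $0$. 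So, viewing a closed oriented foam (a special case of a generalized closed foam, having no tetrahedral points) as a cobordism from $\emptyset$ to $\emptyset$, Proposition \ref{jsharp-p} says precisely that $J^\sharp$ satisfies the mod-$2$ reductions of the four axioms. The one small compatibility point to note is that $F$ on oriented foams is defined by forgetting down to pre-foams with the cyclic orders coming from the left-hand rule, whereas $J^\sharp$ is defined directly; but the sole role of this orientation data in the $\mathbb{Z}$-theory is to fix signs, so it is irrelevant over $\mathbb{F}$.

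Next I would invoke the uniqueness half of Proposition \ref{4axiom}: it is proved in \cite{Kh-sl3} by an explicit finite reduction of an arbitrary closed foam --- neck-cutting (the surgery relation) to lower the genus of facets and to disconnect the foam, disjoint-union multiplicativity, and the base evaluations on spheres, tori, and theta-foams. This algorithm involves only additions and the listed base values; it never inverts an integer, so it is valid verbatim over any commutative ring, in particular over $\mathbb{F}$. The consistency of the reduction (independence of the order of moves), which is a statement about the integral theory, holds \emph{a fortiori} after reduction mod $2$. Hence there is a unique $\mathbb{F}$-valued evaluation on closed oriented foams obeying the mod-$2$ axioms, and it is $F(\,\cdot\,;\mathbb{F})$, which obeys them by construction.

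Finally, combining the two paragraphs: $J^\sharp$ and $F(\,\cdot\,;\mathbb{F})$ both satisfy the mod-$2$ axioms, so by uniqueness $J^\sharp(\Sigma)=F(\Sigma;\mathbb{F})$ for every closed oriented foam $\Sigma$. The only inputs I am borrowing from \cite{KM-jsharp} beyond Proposition \ref{jsharp-p} itself are that $J^\sharp$ of a closed foam is an isotopy invariant and that it behaves functorially under the elementary cobordisms realizing the neck-cutting moves, so that Proposition \ref{jsharp-p}(1)--(4) may legitimately be applied after any reduction step. I do not expect a serious obstacle here; the only thing requiring genuine (but routine) care is confirming that Khovanov's reduction algorithm in \cite{Kh-sl3} never implicitly divides by an integer, which is visible directly from the shape of the four axioms.
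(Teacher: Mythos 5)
Your proposal is correct and is essentially the argument the paper intends: the corollary is stated as an immediate consequence of Proposition \ref{jsharp-p}, whose items (1)--(4) are precisely the mod-$2$ reductions of the four axioms of Proposition \ref{4axiom} (with $-1,3\mapsto 1$, the sign in the surgery relation disappearing, and the two cyclic-permutation classes of theta-foam evaluations merging into ``up to permutation''), combined with the uniqueness clause of Proposition \ref{4axiom} applied over $\mathbb{F}$. Your added remarks --- that Khovanov's reduction algorithm involves no division and hence characterizes the evaluation over $\mathbb{F}$ as well, and that the orientation data only governs signs --- are exactly the routine checks the paper leaves implicit.
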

Given an oriented planar web $\Gamma$, we define
\begin{equation*}
  J'(\Gamma)=\spann \{J^\sharp(\Sigma)|\Sigma\in \Homm_{\text{OF}} (\emptyset,\Gamma)\}\subset J^\sharp(\Gamma)
\end{equation*}
We have a surjective linear map
\begin{equation*}
J'(\Gamma)\to F(\Gamma;\mathbb{F})
\end{equation*}
defined by 
\begin{equation*}
 J^\sharp(\Sigma)\mapsto [\Sigma]
\end{equation*}
where $\Sigma$ is a foam in $\Homm_{\text{OF}} (\emptyset,\Gamma)$.  We need to show this map is well-defined. Suppose
\begin{equation*}
J^\sharp(\Sigma_1)=J^\sharp(\Sigma_2)
\end{equation*}
Then for any $\Phi \in \Homm_{\text{OF}} (\Gamma,\emptyset)$, we have 
\begin{equation*}
J^\sharp(\Phi\circ\Sigma_1)-J^\sharp(\Phi\circ\Sigma_2)=0
\end{equation*}
By the functoriality of $J^\sharp$. Therefore
\begin{equation*}
[\Sigma_1]-[\Sigma_2]=0
\end{equation*}
by the definition of  $F$ in Section \ref{sl3Kh}. So the map is well-defined and $F(\Gamma;\mathbb{F})$ is a subquotient of $J^\sharp(\Gamma)$.
This map turns out to be an isomorphism. 
\begin{PR}\label{J=F}
Given any oriented planar web $\Gamma$, we have
\begin{equation*}  
J^\sharp(\Gamma)=J'(\Gamma)
\end{equation*}
is isomorphic to $F(\Gamma;\mathbb{F})$. The isomorphism is natural  in the sense that given two
oriented planar webs $\Gamma_1$ and $\Gamma_2$, then the digram
\begin{equation*}
\xymatrix{
   J^\sharp(\Gamma_1)\ar[d]_{J^\sharp(\Sigma)} \ar[r]^{\simeq} & F(\Gamma_1,\mathbb{F}) \ar[d]^{F(\Sigma,\mathbb{F})} \\
  J^\sharp(\Gamma_2)\ar[r]^{\simeq} & F(\Gamma_2,\mathbb{F})   } 
\end{equation*}
commutes for any $\Sigma\in \Homm_{\emph{OF}} (\Gamma_1,\Gamma_2)$.
\end{PR}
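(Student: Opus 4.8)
The plan is to reduce everything to the single numerical statement
\[
\dim_{\mathbb{F}} J^\sharp(\Gamma)=\dim_{\mathbb{F}} F(\Gamma;\mathbb{F})\qquad\text{for every planar web }\Gamma.
\]
Granting this, the rest is immediate: we already have $J'(\Gamma)\subseteq J^\sharp(\Gamma)$ and a surjective linear map $J'(\Gamma)\to F(\Gamma;\mathbb{F})$, so
\[
\dim_{\mathbb{F}} F(\Gamma;\mathbb{F})\ \le\ \dim_{\mathbb{F}} J'(\Gamma)\ \le\ \dim_{\mathbb{F}} J^\sharp(\Gamma)=\dim_{\mathbb{F}} F(\Gamma;\mathbb{F}),
\]
forcing equality throughout; hence $J'(\Gamma)=J^\sharp(\Gamma)$ and the surjection is an isomorphism, namely the isomorphism $\psi_\Gamma\colon J^\sharp(\Gamma)\xrightarrow{\ \sim\ }F(\Gamma;\mathbb{F})$, $J^\sharp(\Sigma)\mapsto[\Sigma]$, asserted in the statement.

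To prove the dimension equality I would induct on the number of vertices of $\Gamma$, using the classical reduction scheme for $\mathfrak{sl}(3)$ webs. On the Khovanov side, recall from \cite{Kh-sl3} that $\dim_{\mathbb{F}} F(\cdot;\mathbb{F})$ is governed by four rules: a circle contributes a factor $3$ (since $F(U)=\mathbb{Z}[X]/X^3$); deleting a digon halves the dimension up to grading shifts (Lemma \ref{digon-homology}); a square face splits $F$ as a direct sum over its two smoothings (Lemma \ref{square-homology}); and $F$ is multiplicative under disjoint union. Because $\Gamma$ is bipartite and trivalent, every face of a component carrying a vertex has even length, and an Euler-characteristic count ($V=2E/3$, $F=2+E/3$, $\sum_f\ell(f)=2E$) forces some face to have length $2$ or $4$. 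Passing to an innermost such face, a digon or square reduction strictly lowers the vertex count, and the recursion terminates on disjoint unions of circles; this determines $\dim_{\mathbb{F}} F(\Gamma;\mathbb{F})$.

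The point is that Proposition \ref{jsharp-p} equips $J^\sharp$ with exactly these rules and exactly these numerics: item (7) gives $\dim_{\mathbb{F}} J^\sharp(U)=3$, item (5) is the rank-doubling for digons, item (6) is the direct-sum splitting for squares, and $J^\sharp$ is multiplicative under disjoint union by \cite{KM-jsharp}. Running the identical recursion with the identical base case yields $\dim_{\mathbb{F}} J^\sharp(\Gamma)=\dim_{\mathbb{F}} F(\Gamma;\mathbb{F})$, and hence the isomorphism $\psi_\Gamma$. Naturality of $\psi$ is then purely formal: given $\Phi\in\Homm_{\text{OF}}(\Gamma_1,\Gamma_2)$ and a spanning element $J^\sharp(\Sigma)$ of $J^\sharp(\Gamma_1)=J'(\Gamma_1)$ with $\Sigma\in\Homm_{\text{OF}}(\emptyset,\Gamma_1)$, functoriality of $J^\sharp$ gives $J^\sharp(\Phi)\big(J^\sharp(\Sigma)\big)=J^\sharp(\Phi\circ\Sigma)$, which $\psi_{\Gamma_2}$ sends to $[\Phi\circ\Sigma]$; on the other side $\psi_{\Gamma_1}$ sends $J^\sharp(\Sigma)$ to $[\Sigma]$, and $F(\Phi)[\Sigma]=[\Phi\circ\Sigma]$ by the definition of the functor $F$. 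Thus the naturality square commutes on a spanning set, hence commutes.

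I expect the main obstacle to be the combinatorial bookkeeping of the induction rather than any hard new input. One has to make precise that digon removal, square resolution and splitting off a circle really do evaluate every planar bipartite trivalent graph (the Euler-characteristic step producing a bigon or square face, together with the fact that the moves strictly simplify the graph), and one has to confirm that $\dim_{\mathbb{F}} J^\sharp$ transforms under all of these moves — especially under disjoint union — in exactly the same way as $\dim_{\mathbb{F}} F(\cdot;\mathbb{F})$; both ingredients are available from \cite{Kh-sl3} and \cite{KM-jsharp}. Once the dimension equality is in hand, the identifications $J'(\Gamma)=J^\sharp(\Gamma)\cong F(\Gamma;\mathbb{F})$ and the naturality are a short formal wrap-up.
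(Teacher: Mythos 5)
Your proposal is correct and follows essentially the same route as the paper: establish $\dim_{\mathbb{F}}J^\sharp(\Gamma)=\dim_{\mathbb{F}}F(\Gamma;\mathbb{F})$ by inductively reducing the planar web to circles via digon and square moves (Lemma \ref{digon-homology}, Lemma \ref{square-homology} and parts (5)--(7) of Proposition \ref{jsharp-p}), then combine this with the already-constructed surjection $J'(\Gamma)\to F(\Gamma;\mathbb{F})$ and the inclusion $J'(\Gamma)\subseteq J^\sharp(\Gamma)$, with naturality checked formally on the spanning elements $J^\sharp(\Sigma)$. The only difference is that you re-derive the reducibility of planar webs by an Euler-characteristic count, whereas the paper simply cites \cite{Kh-sl3}*{Section 2} for this fact.
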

\begin{proof}
Any oriented planar web can be reduced into a collection of circles by removing digons 
(Figure \ref{digon-webs}) and squares (Figure \ref{square-co})
(cf. \cite{Kh-sl3}*{Section 2}). Using Lemma \ref{digon-homology}, Lemma \ref{square-homology} 
and Parts (5) (6) (7) of Proposition \ref{jsharp-p} 
to do inductions,
it is easy to show that
\begin{equation*}  
\dim_{\mathbb{F}}J^\sharp(\Gamma)=\dim_{\mathbb{F}}F(\Gamma;\mathbb{F})
\end{equation*}
for any oriented planar web $\Gamma$. Therefore we have
\begin{equation*}  
J^\sharp(\Gamma)=J'(\Gamma)\cong F(\Gamma;\mathbb{F})
\end{equation*}
The naturality of the isomorphism follows directly from its definition. 
\end{proof}
Given any (abstract)  trivalent graph $\Gamma$ with edges $\{e_i\}$, we can define a ring  $\mathcal{R}_\Gamma$ generated by $X_i$ over $\mathbb{F}$ modulo
relations \eqref{ring1} and \eqref{ring2}. Alternatively, we can define
\begin{equation*}
\mathcal{R}_\Gamma:=R_\Gamma\otimes_\mathbb{Z} \mathbb{F}
\end{equation*}
where $R_\Gamma$ is defined in Section \ref{module-str}. Now $F(\Gamma;\mathbb{F})$ carries a $\mathcal{R}_\Gamma$-module structure.

Suppose $\Gamma$ is a spatial trivalent graph, then $J^\sharp(\Gamma)$ is also equipped with a module structure in \cite{KM-jsharp}. 
The module structure can be defined in a similar way as in Section \ref{module-str}: given an edge $e_i$ of $\Gamma$, we form a 
generalized foam $\Sigma=\Gamma\times I$ with a dot on $e_i\times I$ and define
\begin{equation*}
X_i=J^\sharp(\Sigma):J^\sharp(\Gamma)\to J^\sharp(\Gamma)
\end{equation*}
This makes $J^\sharp(\Gamma)$ into a $\mathcal{R}_\Gamma$-module. The isomorphism in Proposition \ref{J=F} respects the module structures 
on both sides.

\subsection{Exact triangles}
From now on 
we assume certain perturbations are chosen so that all the moduli spaces are regular.
Denote the three links in Figure \ref{+01} by $L_2, L_0, L_1$ and the three links in Figure \ref{-01} by $L_2', L_0', L_1'$. 
The Floer chain complex used to define $J^\sharp(L_i)$ (or $J^\sharp(L_i')$) is denoted by $C_i$ ($i\in \mathbb{Z}/3)$.
The following result can 
be read from \cite{KM-triangle}:
\begin{PR}\label{3-gon}
There exist maps
\begin{equation*}
f_{i,{i+k}} :C_{i}\to C_{i+k}, ~k=1,2~\text{or}~3
\end{equation*}
such that
\begin{eqnarray*}
% \nonumber to remove numbering (before each equation)
  df_{i,i+1}+f_{i,i+1}d&=& 0 \\
  df_{i,i+2}+f_{i,i+2}d &=&  f_{i+1,i+2}f_{i,i+1}\\
  df_{i,i+3}+f_{i,i+3}d &=& f_{i+1,i+3}f_{i,i+1} + f_{i+2,i+3}f_{i,i+2}+g_i
\end{eqnarray*}
where $d$ is the Floer differential and 
$$g_i:C_{i}\to C_{i+3}=C_i$$
 is a quasi-isomorphism. 
\end{PR}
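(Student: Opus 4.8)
The plan is to produce the maps $f_{i,i+k}$ from (multi-)parametrized moduli spaces of bifold ASD connections over a cyclic chain of elementary cobordisms joining the three resolutions, which is the strategy of \cite{KM-triangle}; I indicate its shape. Work with the webs $L_2,L_0,L_1$ coming from a positive crossing (Figure \ref{+01}); the case of $L_2',L_0',L_1'$ (Figure \ref{-01}) is the mirror and identical. The three webs $L_i$ ($i\in\mathbb{Z}/3$, $L_{i+3}=L_i$) differ only inside a fixed ball, and any two cyclically consecutive ones cobound an elementary bifold cobordism $W_{i,i+1}$ supported near that ball --- a ``saddle'', a ``zip'', or a ``merge'', i.e.\ the bifold incarnations of the skein cobordisms of Figures \ref{skein-co} and \ref{square-co}. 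After choosing bifold metrics and holonomy perturbations making all moduli spaces over $W_{i,i+1}$ regular and reducible-free (the latter guaranteed by the auxiliary Hopf link in the definition of $J^\sharp$), the signed count of isolated bifold instantons defines chain-level maps $f_{i,i+1}\colon C_i\to C_{i+1}$, and the boundary analysis of the $1$-dimensional moduli spaces over $W_{i,i+1}$ gives $df_{i,i+1}+f_{i,i+1}d=0$.

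For the secondary maps, I would place on the composite $W_{i+1,i+2}\circ W_{i,i+1}$ a one-parameter family of bifold metrics $g_t$, $t\in[0,\infty)$, stretching the neck along the intermediate copy of $L_{i+1}$, having first chosen the short-neck end to be a representative carrying no bifold instanton of the relevant index (the double composite being isotopic rel boundary to a split cobordism). Let $f_{i,i+2}$ be the signed count of isolated points in the resulting parametrized moduli space. Uhlenbeck compactness and the gluing theorem identify the boundary of the associated $1$-manifold: trajectory breaking at the two Floer ends contributes $df_{i,i+2}+f_{i,i+2}d$, and the neck-stretching degeneration as $t\to\infty$ contributes the composite $f_{i+1,i+2}f_{i,i+1}$, with no other contributions, which yields the second relation.

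For the tertiary maps, on the triple composite $W_{i,i+3}:=W_{i+2,i+3}\circ W_{i+1,i+2}\circ W_{i,i+1}$, a cobordism from $L_i$ to $L_{i+3}=L_i$, I would use a two-parameter family of bifold metrics over a square (or a $2$-simplex), stretching the two intermediate necks independently, and define $f_{i,i+3}$ by counting isolated points over this two-dimensional domain. Compactifying to a $2$-manifold with corners, the codimension-one faces produce, besides $df_{i,i+3}+f_{i,i+3}d$, the terms $f_{i+1,i+3}f_{i,i+1}$ (one neck long) and $f_{i+2,i+3}f_{i,i+2}$ (the other neck long), while the corner where both necks are short contributes the map $g_i$ induced by $W_{i,i+3}$ with short necks. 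The point is that this last cobordism is isotopic rel boundary to the product cobordism $L_i\times I$, possibly with a disjoint closed sphere carrying two dots, which by Proposition \ref{jsharp-p} evaluates to the unit of $\mathbb{F}$; hence $g_i$ is chain homotopic to the identity and in particular a quasi-isomorphism. Assembling the faces gives the third relation.

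The main obstacle is the analytic input: showing that the one- and two-parameter parametrized moduli spaces of bifold instantons are compact manifolds-with-corners of the expected dimensions with exactly the claimed boundary strata, and that transversality can be arranged within the restricted class of bifold metrics and perturbations --- this is the technical content of \cite{KM-triangle}, resting on \cite{KM-jsharp}. A secondary, more combinatorial point is the verification of the isotopies invoked above (that the short-neck double composite is ``inessential'' and that the short-neck triple composite is the product cobordism up to a two-dotted sphere), for which one uses the explicit local models of the skein and square cobordisms in Figures \ref{skein-co} and \ref{square-co} together with the foam evaluations of Proposition \ref{jsharp-p}.
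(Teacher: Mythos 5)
Your overall architecture — define $f_{i,i+1}$ by counting instantons on the elementary bifold cobordism, and define $f_{i,i+2}$, $f_{i,i+3}$ by counting over one- and two-parameter families of metrics on the composites, reading the three relations off the ends of the compactified parametrized moduli spaces — is indeed the shape of the argument that the paper imports from \cite{KM-triangle} (the paper itself offers no proof beyond this citation, together with the description of the families $\breve{G}_{i,i+k}$ of dimension $k-1$). But the two places where you supply your own justification for the hard steps are wrong. First, for $k=2$ your family is a half-line of neck-stretching metrics with a chosen ``instanton-free'' metric at the finite end, justified by the claim that the double composite is isotopic rel boundary to a split cobordism; it is not, and there is no reason a fixed metric carries no isolated solutions. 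In the actual construction (see the proof of Proposition \ref{cube-quasi-iso} in this paper, following \cite{KM:Kh-unknot}), the parameter space is a full line $\breve{G}_{i,i+2}\cong\mathbb{R}$: one end is broken along the intermediate web $L_{i+1}$ and yields $f_{i+1,i+2}f_{i,i+1}$, while the other end is broken along a specific $3$-dimensional sub-bifold (not a neck along any $L_j$), and the vanishing of its contribution is a computation, not an isotopy statement. One must also rule out (or, in characteristic $2$, pair up) the bubbling ends, which your ``no other contributions'' elides.

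Second, and more seriously, your identification of $g_i$ is incorrect: the triple composite $W_{i,i+3}$ with short necks is \emph{not} isotopic rel boundary to the product cobordism $L_i\times I$ together with a disjoint two-dotted sphere. The composite of the three skein cobordisms around the triangle is topologically nontrivial (it differs from the product by a standard closed sub-bifold/connected-sum piece), and proving that the map $g_i$ — which arises not at a ``both necks short'' corner but from the extra faces of the $2$-dimensional family $\breve{G}_{i,i+3}$ obtained by stretching along the special sub-bifolds — is a quasi-isomorphism is precisely the central technical theorem of \cite{KM-triangle}. It requires identifying the stretched limit with a standard bifold and computing its instanton contribution at the chain level, possibly up to chain homotopy and lower-order terms with respect to a filtration; it cannot be reduced to an isotopy plus the closed-foam evaluations of Proposition \ref{jsharp-p}, which compute $J^\sharp$ of closed foams rather than chain-level counts over families of metrics. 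As written, your argument assumes the conclusion at exactly the point where the proof is difficult.
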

By \cite{OS-ss}*{Lemma 4.2}, the above proposition implies
\begin{COR}\label{3-gon-quasi-iso}
The map
\begin{equation*}
f_{{i},i+2}+ f_{{i+1},i+2} :  \cone (f_{{i},i+1})=C_{i}\oplus C_{i+1}\to C_{i+2}
\end{equation*}
is an quasi-isomorphism. 
\end{COR}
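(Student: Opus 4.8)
The plan is to reduce this to the purely homological-algebra statement that it is, namely \cite{OS-ss}*{Lemma 4.2}. A chain map between complexes of $\mathbb{F}$-vector spaces is a quasi-isomorphism if and only if its mapping cone is acyclic, so it suffices to prove that $X_i:=\cone\big(f_{i,i+2}+f_{i+1,i+2}\colon \cone(f_{i,i+1})\to C_{i+2}\big)$ is acyclic. Unwinding the iterated mapping cone, $X_i$ is the twisted three-term complex on $C_i\oplus C_{i+1}\oplus C_{i+2}$ whose total differential has off-diagonal components $f_{i,i+1}$, $f_{i+1,i+2}$, $f_{i,i+2}$; the first two identities of Proposition \ref{3-gon}, namely $df_{i,i+1}+f_{i,i+1}d=0$ and $df_{i,i+2}+f_{i,i+2}d=f_{i+1,i+2}f_{i,i+1}$, are exactly the assertion that this total differential squares to zero. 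The same relations show that $\Psi_i:=(f_{i,i+1},f_{i,i+2})\colon C_i\to \cone(f_{i+1,i+2})$ is a chain map, and re-bracketing the twisted complex identifies $X_i=\cone(\Psi_i)$.

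The key idea is to ``roll up'' the skein triangle using the quasi-isomorphisms $g_i$. Put $\phi_{i+1}:=f_{i+1,i+3}+f_{i+2,i+3}\colon \cone(f_{i+1,i+2})\to C_{i+3}=C_i$, so that $X_{i+1}=\cone(\phi_{i+1})$ by definition. The third relation of Proposition \ref{3-gon} says precisely that the composite $\phi_{i+1}\circ\Psi_i\colon C_i\to C_i$ is chain homotopic, via $f_{i,i+3}$, to $g_i$. Applying the octahedral axiom in the homotopy category of complexes of $\mathbb{F}$-vector spaces to the composable pair $C_i\xrightarrow{\Psi_i}\cone(f_{i+1,i+2})\xrightarrow{\phi_{i+1}}C_i$ produces an exact triangle $\cone(\Psi_i)\to \cone(\phi_{i+1}\circ\Psi_i)\to \cone(\phi_{i+1})\to \cone(\Psi_i)[1]$. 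Since $g_i$ is a quasi-isomorphism, the middle term $\cone(\phi_{i+1}\circ\Psi_i)\simeq\cone(g_i)$ is acyclic, and the triangle therefore collapses to a quasi-isomorphism $X_{i+1}=\cone(\phi_{i+1})\simeq \cone(\Psi_i)[1]=X_i[1]$.

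Iterating this three times around the $\mathbb{Z}/3$-periodic data (so that $X_{i+3}=X_i$) gives $X_i\simeq X_i[3]$, hence $H_\ast(X_i)\cong H_\ast(X_i)[3]$. Because the instanton Floer complexes $C_i$ are finitely generated, $X_i$ is a bounded complex, and a bounded complex with $3$-periodic homology must be acyclic; so $X_i$ is acyclic and $f_{i,i+2}+f_{i+1,i+2}$ is a quasi-isomorphism. This finiteness bookkeeping, together with the precise choice of signs in the iterated mapping cones, is exactly what \cite{OS-ss}*{Lemma 4.2} encapsulates, which is why in the paper one simply cites it.

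The step I expect to need the most care is the identification $X_i=\cone(\Psi_i)$ and the octahedral comparison $X_{i+1}\simeq X_i[1]$, carried out with a consistent set of sign and grading-shift conventions for all the iterated cones; the algebra is elementary but easy to mishandle. An equivalent, more hands-on route that avoids the octahedral axiom is to construct the homotopy equivalence $X_{i+1}\simeq X_i[1]$ directly on chains out of $f_{i,i+3}$ and a homotopy inverse of $g_i$ — the latter exists because over the field $\mathbb{F}$ every quasi-isomorphism between finitely generated complexes is a chain homotopy equivalence.
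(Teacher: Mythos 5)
Your reductions are fine up to the last step: over $\mathbb{F}$ the re-bracketing $X_i=\cone(\Psi_i)$, the chain-map checks, the homotopy $\phi_{i+1}\circ\Psi_i\simeq g_i$ via $f_{i,i+3}$, and the octahedral triangle giving $H_*(X_{i+1})\cong H_*(X_i)$ up to shift are all correct (and in characteristic $2$ the sign worries you flag are vacuous). The genuine gap is the final deduction ``$X_i$ is a bounded complex, and a bounded complex with $3$-periodic homology must be acyclic.'' That argument needs an absolute, bounded $\mathbb{Z}$-grading on $X_i$ with respect to which all the maps in your comparison are homogeneous of the stated degrees. The instanton Floer complexes $C_i$ here have no such grading: they carry only a \emph{relative} cyclic grading (the paper itself only ever uses a relative $\mathbb{Z}/4$-grading on these complexes), and the maps $f_{i,i+k}$, defined by counting over $(k-1)$-dimensional families of metrics, are not homogeneous for any absolute $\mathbb{Z}$-grading. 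For a finite-dimensional, ungraded (or cyclically graded) complex, the conclusion $H_*(X_i)\cong H_*(X_i)[3]$ is no contradiction at all --- it is just an equality of dimensions (or an automorphism of a $\mathbb{Z}/4$-graded space) and is perfectly compatible with $X_i$ having nonzero homology. So acyclicity does not follow, and the proof as written does not close.

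This is exactly why the paper simply invokes \cite{OS-ss}*{Lemma 4.2}: that lemma is stated and proved with no grading hypotheses, for an arbitrary (not necessarily periodic) sequence of complexes, and its proof extracts injectivity and surjectivity of the map on homology from the quasi-isomorphisms $g_i$ at \emph{consecutive} indices, rather than from a grading-periodicity argument. To repair your write-up you should either quote that lemma (which is what the paper does --- your first two paragraphs essentially re-derive its hypotheses from Proposition \ref{3-gon}, so the citation is immediate), or replace the last paragraph by a grading-free argument in the spirit of that lemma; the ``periodic, bounded, hence acyclic'' shortcut is not available in this Floer-theoretic setting.
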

There is a standard foam cobordism $\Sigma_{i,i+1}$ from $L_i$ to $L_{i+1}$ (or from $L_i'$ to $L_{i+1}'$). 
In particular, $\Sigma_{0,1}$ is the cobordism in Figure \ref{skein-co}. 
We can make $\Sigma_{i,i+1}$ into a surface $\Sigma_{i,i+1}^+$ with cylindrical end and $\Sigma_{i,i+1}$ is a product away from a four-ball where
the skein move happens. The pair 
$(S^3\times \mathbb{R},\Sigma_{i,i+1}^+\cup H\times \mathbb{R})$ is equipped with a family of bifold metrics parametrized by
$G_{i,i+1}\cong \mathbb{R}$. The $\mathbb{R}$-translation on $S^3\times \mathbb{R}$ gives an $\mathbb{R}$-action on $G_{i,i+1}$ and the quotient
space $\breve{G}_{i,i+1}$ consists of a single metric.  
The map $f_{i,i+1}$ is defined by counting points in the zero-dimensional moduli space over $\breve{G}_{i,i+1}$. Passing to homology, 
$f_{i,i+1}$ is exactly the map $J^\sharp(\Sigma_{i,i+1}):J^\sharp(L_i)\to J^\sharp(L_{i+1})$.

In general, let $\Sigma_{i,i+k}:L_i\to L_{i+k}$ be the composition of cobordisms 
$\Sigma_{i+k-1,i+k},\cdots, \Sigma_{i,i+1}$ and $\Sigma_{i,i+k}^+$ be the surface obtained by adding cylindrical ends to $\Sigma_{i,i+k}$.
A $(k-1)$-dimensional family of bifold metrics on $(S^3\times \mathbb{R},\Sigma_{i,i+k}^+)$
parametrized by $\breve{G}_{i,i+k}$ is defined in \cite{KM-jsharp} for $k=1,2,3$
(see also \cite{KM:Kh-unknot}). Again  there is a $k$-dimensional family of metrics parametrized 
by $G_{i,i+k}$ with an $\mathbb{R}$-action such that $\breve{G}_{i,i+k}=G_{i,i+k}/\mathbb{R}$.  The map $f_{i,i+k}$ is defined by counting points
in the zero-dimensional moduli spaces over $\breve{G}_{i,i+k}$ and the relations in Proposition \ref{3-gon} are derived by analyzing 
the boundaries of 1-dimensional moduli spaces over $\breve{G}_{i,i+k}$.   

\subsection{The spectral sequence}\label{J-ss}
In this subsection we will prove the following.
\begin{THE}\label{s-sequence}
Let $\Gamma$ be an oriented spatial web. There is a spectral sequence of $\mathcal{R}_\Gamma$-modules whose $E_2$-page is the $\mathfrak{sl}(3)$
Khovanov module $\mathcal{H}(\Gamma;\mathbb{F})$ and which converges to $J^\sharp(\Gamma)$.
\end{THE}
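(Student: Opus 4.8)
The plan is to build the spectral sequence from a filtration of the instanton Floer chain complex associated to a diagram $D$ of $\Gamma$, mimicking Ozsvath-Szabo's construction in \cite{OS-ss} but keeping track of the $\mathcal{R}_\Gamma$-module structure throughout. First I would fix a diagram $D$ of $\Gamma$ with $n$ crossings. For each vertex $v\in\{0,1\}^n$ of the cube, the total resolution $D_v$ is an oriented planar web, and Kronheimer--Mrowka's functor gives a Floer complex $C_v$ computing $J^\sharp(D_v)$; by Proposition \ref{J=F} its homology is $F(D_v;\mathbb{F})$. For each edge $v\to u$ of the cube there is the standard skein cobordism $\Sigma_{vu}$ (Figure \ref{skein-co}) inducing a chain map $C_v\to C_u$, and more generally for any $v\le w$ differing in $k$ coordinates Proposition \ref{3-gon} provides maps $f_{vw}$ (from the $k$-dimensional families of bifold metrics) satisfying the higher-homotopy relations of an ``$n$-cube'' up to the quasi-isomorphisms $g$. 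Assembling these into a single complex $\mathcal{C}(D):=\bigl(\bigoplus_v C_v,\ \sum f_{vw}\bigr)$ with the appropriate signs, one gets a chain complex filtered by $|v|_1$ whose associated graded differential (the ``vertical'' part) is the internal Floer differential on each $C_v$, and whose first nonzero piece of the induced differential on the $E_1$-page is exactly the Khovanov cube differential $\sum F(S_{vu};\mathbb{F})$.

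The main steps would then be: (1) show $\mathcal{C}(D)$ is quasi-isomorphic to the Floer complex of $\Gamma$ itself, hence its homology is $J^\sharp(\Gamma)$ — this is the analogue of the unknotting/cube argument and follows by repeatedly collapsing edges of the cube using Corollary \ref{3-gon-quasi-iso} (each collapse replaces $\cone(f_{v,u})$ by $C_{\text{resolved}}$ up to quasi-isomorphism, and the $g$'s guarantee the total complex reduces correctly); (2) compute the spectral sequence of the $|v|_1$-filtration: $E_0$ is $\bigoplus_v C_v$ with Floer differentials, so $E_1=\bigoplus_v F(D_v;\mathbb{F})$ with the maps induced by the skein cobordisms; by Proposition \ref{J=F} and the inductive (cube) definition of $F(D;\mathbb{F})$ in Section \ref{sl3Kh}, $(E_1,d_1)$ is precisely the Khovanov cube complex, so $E_2\cong \mathcal{H}(\Gamma;\mathbb{F})$; (3) verify the filtration, and hence the whole spectral sequence from $E_2$ onward, is independent of the diagram $D$ — this uses the invariance proofs of Section \ref{sl3Kh} (Propositions \ref{RVD=D'}, \ref{RIVD=D'} and the Reidemeister I, II, III arguments) together with the corresponding moves of bifold cobordisms, exactly as in the $\mathfrak{sl}(2)$ case; (4) install the $\mathcal{R}_\Gamma$-module structure: for each edge $e_i$ of $\Gamma$ pick a point $p$ on it disjoint from the crossings, and let $X_p$ act on $\mathcal{C}(D)$ by the dotted product cobordism $\Sigma=\Gamma\times I$ (placed on each $C_v$ simultaneously, as in Section \ref{module-str} and Section \ref{functor-I}); this is a filtered chain map, it induces $X_i$ on $E_2=\mathcal{H}(\Gamma;\mathbb{F})$ by construction, and it induces the module action on $E_\infty=\operatorname{gr} J^\sharp(\Gamma)$, so the spectral sequence is one of $\mathcal{R}_\Gamma$-modules and commutes with all $X_i$.

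The hard part will be step (2)/(4): namely, checking that the \emph{higher} differentials $f_{vw}$ from the multi-parameter families of bifold metrics assemble into a genuine chain complex with the filtration degrees being exactly $|v|_1$, so that no differential shortcuts the filtration and $d_1$ is literally the Khovanov differential rather than something equal to it only up to an unknown automorphism. Concretely, one must arrange signs (a coherent sign assignment on the $n$-cube, in the presence of the $g$-terms from Proposition \ref{3-gon}) so that $\mathcal{C}(D)$ squares to zero, and one must identify $F(S_{vu};\mathbb{F})$ with the induced map on homology of $f_{vu}$ using Proposition \ref{J=F}'s naturality — the commuting square there is exactly what is needed. A secondary subtlety is that $f_{vw}$ for $k\ge 4$ is \emph{not} defined in \cite{KM-jsharp}; but this is harmless since one only ever collapses one edge at a time in step (1), so only $k\le 3$ data enter, and the classical ``iterated mapping cone'' formalism (as used for the inductive definition of $F(D)$ in Figure \ref{ind-cube} and in the pointed-web discussion of Section \ref{pointed-web}) handles the bookkeeping. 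Everything else — diagram independence and module-compatibility — is a routine but lengthy transcription of the Section \ref{sl3Kh} invariance arguments into the world of bifold cobordisms, using that each Reidemeister move is realized by a bifold cobordism to which $J^\sharp$ applies functorially.
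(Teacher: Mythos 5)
Your overall strategy---the cube $\mathbf{C}=\bigoplus_{v\in\{0,1\}^n}C_v$ filtered by $|v|_1$, identification of the $E_1$- and $E_2$-pages via Proposition \ref{J=F}, and convergence to $J^\sharp(\Gamma)$ by iterating the one-crossing quasi-isomorphism of Corollary \ref{3-gon-quasi-iso}---is the same as the paper's (Propositions \ref{CF-chain} and \ref{cube-quasi-iso}). One point of your description is off, though mostly harmlessly: the cube differential is not assembled from the maps $f_{i,i+k}$ of Proposition \ref{3-gon}. For $v\le u$ in $\{0,1\}^n$ the maps $f_{vu}$ are defined for \emph{every} value of $|u-v|_1$, by counting instantons over the family of metrics $G_{vu}\cong\mathbb{R}^{|u-v|_1}$ obtained by shifting the skein four-balls, and all of these higher maps are needed for $\mathbf{F}\mathbf{F}=0$; your claim that only $k\le 3$ data enter confuses the cube differential with the comparison to the partially unresolved diagrams, which is where the $2$- and $3$-dimensional metric families of \cite{KM-triangle} and the terms $g_i$ actually appear. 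Since everything is over $\mathbb{F}$, the sign problem you worry about is vacuous; the real issue in proving $\mathbf{F}\mathbf{F}=0$ is controlling bubble ends of the $1$-dimensional moduli spaces, which occur in even numbers and so cancel mod $2$ (Proposition \ref{CF-chain}). Your step (3) (diagram independence of the spectral sequence) is not needed for the statement.

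The genuine gap is in your step (4), which is precisely the new content of the theorem (the words ``of $\mathcal{R}_\Gamma$-modules''). Acting by the dotted product cobordism ``placed on each $C_v$ simultaneously'' is the diagonal operator $\sum_v r_{vv}$; it commutes with each internal Floer differential, but it commutes with the edge maps $f_{vu}$ only up to chain homotopy, so it is not a chain map on $(\mathbf{C},\mathbf{F})$ and does not by itself act on the spectral sequence. The paper repairs this by defining $\mathbf{R}=\sum_{v\le u}r_{vu}$, where the off-diagonal terms are obtained by cutting down the moduli spaces over the metric families by the divisor $V(\delta_i)$, and proving $\mathbf{F}\mathbf{R}+\mathbf{R}\mathbf{F}=0$ (Proposition \ref{CF-chain}). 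Moreover, to conclude that the induced action on the abutment is the $\mathcal{R}_\Gamma$-action on $J^\sharp(\Gamma)$, you must check that the quasi-isomorphism of your step (1) intertwines this operator with $X_i$; this is not automatic ``by construction'' and is proved in the paper by introducing a further operator $\mathbf{R}'$ on the composite metric families $G_{v,2u'}$ and showing $\mathbf{R}_2\mathbf{H}+\mathbf{H}\mathbf{R}+\mathbf{F}_2\mathbf{R}'+\mathbf{R}'\mathbf{F}=0$ (second half of Proposition \ref{cube-quasi-iso}). Without these two ingredients your argument yields a spectral sequence of $\mathbb{F}$-vector spaces only.
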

The proof is very little different from the proof of a corresponding result for 
$\mathfrak{sl}(2)$ Khovanov homology in \cite{KM:Kh-unknot}. The current situation is
even simpler in some aspect: 
since we are working in characteristic 2, there is no need to deal with the orientations and signs as in \cite{KM:Kh-unknot}.  
All the necessary ingredients of the proof are already included in \cite{KM:Kh-unknot,KM-jsharp,KM-triangle}. 

Suppose $\Gamma$ is an oriented spatial web and $D$ is a diagram for $\Gamma$ with $n$ crossings. Given $v\in \{0,1\}$, let $D_v$ be the 
planar web obtained by resolving the crossings using $v$ as in Section \ref{sl3Kh}. 
We define
\begin{eqnarray*}
% \nonumber to remove numbering (before each equation)
  |v|_1 &=& \sum_i |v_i| \\
  |v|_\infty &=& \sup_i |v_i|
\end{eqnarray*}
for any $v\in \mathbb{R}^n$.
Given any $v\in \{0,1\}^n$, we use $C_v$ to denote the Floer chain complex for $J^\sharp(D_v)$ and
use $d_v$ to denote the differential on $C_v$. Given $v\le u$ in $\{0,1\}^c$ (i.e. $v_i\le u_i$ for all $i$ where $v_i, u_i$ are coordinates of $v,u$), 
there is a cobordism $S_{vu}$ which can be made into a surface
$S_{vu}^+\subset S^3\times \mathbb{R}$ 
with cylindrical end in the standard way. 
The surface
$S_{vu}^+$ is a product surface away from $|v-u|_1$ four-balls where the skein moves happen.
By shifting these four-balls containing the skein moves, we can define a family of metrics parameterized by $G_{vu}\cong \mathbb{R}^{|v-u|_1}$.
There is an $\mathbb{R}$-action on $G_{vu}$ defined by the $\mathbb{R}$-translation on $\mathbb{R}\times S^1\times S^2$. The quotient $G_{vu}\slash \mathbb{R}$ is denoted by
$\breve{G}_{vu}$. $G_{vu}$ and $\breve{G}_{vu}$ are not compact in general but can be compactified into spaces
$G_{vu}^+$ and $\breve{G}_{vu}^+$ by adding broken metrics.
Let
$
  M_{vu}(\alpha,\beta)_d
$
be the $d$-dimensional moduli space of ASD trajectories over $G_{vu}$
with limiting connection $\alpha$ on the incoming end and $\beta$ on the outgoing end. Here $\alpha$ and $\beta$ are generators for
$C_v$ and $C_u$ respectively.
There is an obvious map
$M_{vu}\to G_{vu}$ and the $\mathbb{R}$-action on $G_{vu}$ can be lifted on $M_{vu}(\alpha, \beta)_d$. We denote the quotient by
\begin{equation*}
  \breve{M}_{vu}(\alpha,\beta)_{d-1}:=M_{vu}(\alpha, \beta)_d/\mathbb{R}
\end{equation*}
Both $M_{vu}(\alpha,\beta)_d$ and $\breve{M}_{vu}(\alpha,\beta)_{d-1}$ can be partially compactified by adding broken trajectories lying over broken metrics in
$\partial G_{vu}^+$ and $\partial\breve{G}_{vu}^+$. These are only partial compactifications because of the possible appearance of bubbles. 
We denote these partial compactifications by $M^+_{vu}(\alpha,\beta)$ and $\breve{M}^+_{vu}(\alpha,\beta)$ respectively.
A group homomorphism
\begin{equation}\label{fvu}
  f_{vu}: C_v\to C_u
\end{equation}
can be defined by counting  points in the 0-dimensional moduli space:
\begin{equation}\label{fvu=}
  f_{vu}(\alpha):=\sum_\beta \# \breve{M}_{vu}(\alpha,\beta)_0 \cdot \beta
\end{equation}
where $\beta$ runs through all the generators for $C_u$. In the case $v=u$, $m_{vv}$ is just the Floer differential.

Suppose $e_i$ is an edge of $\Gamma$ and $\delta_i\in e_i$ is a point disjoint with all the three-balls
where the skein moves of $\Gamma$ happen. The homology class of the meridian around $e_i$ determines an element $\zeta_i$ in 
$H^1(B^\sharp(Y,\Gamma);\mathbb{F})$. 
Let $\nu(\delta_i)\subset S^3$ be a suitable neighborhood of $\delta_i$ that is also disjoint from  all the skein moves and 
contains $H$.  
A (real) codimension 1 divisor 
$$V(\delta_i)\subset B^\sharp(\delta_i):= B^\sharp( \nu(\delta_i)\times (-1,1),(\Gamma\cap\nu(\delta_i))\times (-1,1))$$ 
is defined in \cite{KM-jsharp}*{Section 3.5}.
The pullback of $V(\delta_i)$ to $B^\sharp(Y,\Gamma)$ is the dual of $\zeta_i$.
Given $v\le u$ as before, we define a map
\begin{equation*}
  r_{vu}:C_v\to C_u
\end{equation*}
by
\begin{equation}\label{rvu}
 r_{vu}(\alpha):= \sum_\beta \#  ({M}_{vu}(\alpha,\beta)_1 \cap V(\delta_i)) \cdot \beta
\end{equation}
where $({M}_{vu}(\alpha,\beta)_1 \cap V(\delta_i))$ should be understood as pulling back the divisor $V(\delta_i)$ 
by the restriction $r: M_{vu}\to B^\sharp(\delta_i)$. We assume the divisors
$V(\delta_i)$ are generic so that all the intersections are regular. When $v=u$,
 $r_{vv}:C_v\to C_v$ induces the map 
 $$X_i=J^\sharp(\dot{D}_v\times I):J^\sharp(D_v)\to J^\sharp(D_v)$$ 
 on the homology where $\dot{D}_v\times I$ is the foam $D_v\times I$ with a dot on the facet $e_i\times I$.
Now we define
\begin{equation}\label{CF}
\mathbf{C}=\bigoplus_{v\in \{0,1\}^n}C_v,~ \mathbf{F}:=\sum_{v\le u}f_{vu}:\mathbf{C}  \to \mathbf{C}
\end{equation} 
and
\begin{equation}\label{Rrvu}
 \mathbf{R}:=\sum_{v\le u}r_{vu}:\mathbf{C}  \to \mathbf{C}
\end{equation} 

\begin{PR}\label{CF-chain}
We have 
\begin{equation*}
 \mathbf{F}\mathbf{F}=0, ~ \mathbf{F}\mathbf{R}+\mathbf{R}\mathbf{F}=0
\end{equation*}
so that 
\begin{equation*}
 (\mathbf{C},\mathbf{F})
\end{equation*}
is a chain complex and 
\begin{equation*}
 \mathbf{R}:\mathbf{C}\to \mathbf{C}
\end{equation*}
is a chain map.
\end{PR}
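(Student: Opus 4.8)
The plan is to prove both identities by the usual gauge-theoretic bookkeeping for cube-of-resolutions constructions, which here is the $J^\sharp$-analogue of Kronheimer and Mrowka's $\mathfrak{sl}(2)$ argument in \cite{KM:Kh-unknot}; since we work over $\mathbb{F}$ no signs need to be tracked. Concretely, I would realize each matrix coefficient of $\mathbf{F}\mathbf{F}$ and of $\mathbf{F}\mathbf{R}+\mathbf{R}\mathbf{F}$ as the mod $2$ count of the boundary of a compact $1$-manifold, namely the $1$-dimensional part of a (for the second identity, cut-down) parametrized moduli space over one of the compactified polytopes $\breve{G}^+_{vu}$ or $G^+_{vu}$. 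The boundary of such a $1$-manifold decomposes into contributions from the codimension $1$ faces of the polytope, where the underlying bifold develops a long neck and splits into two composable cobordism pieces, and from broken ASD trajectories over a fixed metric at the two cylindrical ends; reading off these contributions gives precisely the composites appearing in the two identities. All the analytic ingredients — transversality of the holonomy-perturbed moduli spaces uniformly over the parameter families, absence of reducibles (ensured by the non-trivial $w_2$ carried by the auxiliary Hopf link $H$), and the fact that bubbling occurs only in codimension at least two so that the relevant partial compactifications $\breve{M}^+$, $M^+$ really are compact manifolds with boundary in the dimensions used — are established in \cite{KM-jsharp,KM-triangle,KM:Kh-unknot} and would be quoted rather than reproved.

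For $\mathbf{F}\mathbf{F}=0$ I would fix $v\le w$ and examine the $(v,w)$ coefficient $\sum_{v\le s\le w}f_{sw}f_{vs}$, with the convention $f_{ss}=d_s$. Given generators $\alpha$ of $C_v$ and $\gamma$ of $C_w$, the reduced moduli space $\breve{M}^+_{vw}(\alpha,\gamma)_1$ over $\breve{G}^+_{vw}$ is a compact $1$-manifold whose boundary points are of two types: points lying over a codimension $1$ face of $\breve{G}^+_{vw}$, which is indexed by an intermediate vertex $v<s<w$ and, after summing over the connecting generators of $C_s$, contributes the $(\alpha,\gamma)$ coefficient of $f_{sw}f_{vs}$; and broken trajectories $\breve{M}_{vv}(\alpha,\alpha')_0\times\breve{M}_{vw}(\alpha',\gamma)_0$ or $\breve{M}_{vw}(\alpha,\gamma')_0\times\breve{M}_{ww}(\gamma',\gamma)_0$ at the ends, which contribute the $s=v$ and $s=w$ terms $f_{vw}d_v$ and $d_wf_{vw}$. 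Since a compact $1$-manifold has an even number of boundary points, $\sum_{v\le s\le w}f_{sw}f_{vs}=0$, which is the $(v,w)$ coefficient of $\mathbf{F}\mathbf{F}$.

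For $\mathbf{F}\mathbf{R}+\mathbf{R}\mathbf{F}=0$ the same analysis is run with the divisor $V(\delta_i)$ inserted. Because $\delta_i$ lies away from every ball in which a skein move happens, $V(\delta_i)$ restricts to a well-defined generic real-codimension-$1$ divisor on the configuration space over each sub-cobordism and over each cylindrical end. Fixing $v\le w$, the $(v,w)$ coefficient is $\sum_{v\le s\le w}\bigl(f_{sw}r_{vs}+r_{sw}f_{vs}\bigr)$, and I would count the boundary of the cut-down space $M^+_{vw}(\alpha,\gamma)_2\cap V(\delta_i)$, a compact $1$-manifold for generic $V(\delta_i)$. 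On a codimension $1$ face of $G^+_{vw}$ the bifold splits into two cobordism pieces and the single divisor constraint must be carried by exactly one of them, so such a face contributes either the coefficient of $f_{sw}r_{vs}$ or of $r_{sw}f_{vs}$ for $v<s<w$; trajectory breaking at the ends with the divisor on the cobordism part contributes $r_{vw}d_v$ and $d_wr_{vw}$, while the breakings in which the divisor slides onto a cylindrical factor $D_v\times\mathbb{R}$ or $D_w\times\mathbb{R}$ contribute the remaining $s=v$, $s=w$ terms $f_{vw}r_{vv}$ and $r_{ww}f_{vw}$. Counting mod $2$ yields $\sum_{v\le s\le w}\bigl(f_{sw}r_{vs}+r_{sw}f_{vs}\bigr)=0$, so $\mathbf{R}$ is a chain map and $(\mathbf{C},\mathbf{F})$ a chain complex.

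The main obstacle, such as it is, is not any new gauge theory but the precise description of the compactified parameter polytopes $\breve{G}^+_{vw}$ and $G^+_{vw}$ over the $n$-cube and the matching of their codimension $1$ faces with the intermediate vertices $v<s<w$ and with the composite maps $f_{sw}f_{vs}$, $f_{sw}r_{vs}$, $r_{sw}f_{vs}$; for the cut-down identity one must additionally be careful that a real-codimension-$1$ divisor, unlike a degree $2$ cohomology class, does not distribute as a sum over the two factors of a broken configuration but is carried wholly by one of them. Both points are handled exactly as in the $\mathfrak{sl}(2)$ case \cite{KM:Kh-unknot}, so I would present only this structure and refer to \cite{KM-jsharp,KM-triangle,KM:Kh-unknot} for the analytic details.
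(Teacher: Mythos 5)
Your overall strategy is the same as the paper's: realize each matrix coefficient of $\mathbf{F}\mathbf{F}$ and of $\mathbf{F}\mathbf{R}+\mathbf{R}\mathbf{F}$ as a mod $2$ count of ends of the one-dimensional moduli spaces $\breve{M}^+_{vu}(\alpha,\beta)_1$ and $M^+_{vu}(\alpha,\beta)_2\cap V(\delta_i)$ over the families of metrics, with faces of $\breve{G}^+_{vu}$, $G^+_{vu}$ and broken trajectories accounting for the composites $f_{wu}f_{vw}$, $f_{wu}r_{vw}$, $r_{wu}f_{vw}$. However, there is a genuine gap in the compactness step. You assert that ``bubbling occurs only in codimension at least two so that the relevant partial compactifications $\breve{M}^+$, $M^+$ really are compact manifolds with boundary in the dimensions used,'' and propose to quote this from the references. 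In the bifold setting of $J^\sharp$ this inference is false, and it is not what the references establish: the minimal bubble sits at a point of the \emph{seam} and drops the dimension only by $2$, so a codimension-$2$ bubble can perfectly well appear as an open end of the one-dimensional unparametrized moduli space $\breve{M}^+_{vu}(\alpha,\beta)_1$ (this happens when $v=u$, $\alpha=\beta$, with the family converging off the bubble point to the product trajectory). Consequently ``a compact $1$-manifold has an even number of boundary points'' does not apply as stated, and your count is not yet justified.

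The paper closes exactly this gap with two separate arguments that your proposal omits. For $\mathbf{F}\mathbf{F}=0$ it invokes the argument of \cite{KM-jsharp}*{Section 3.3} showing that the number of open ends of $\breve{M}^+_{vu}(\alpha,\beta)_1$ arising from such seam bubbles is always \emph{even}, so that over $\mathbb{F}$ these ends cancel and the count of broken-trajectory boundary points still vanishes. For $\mathbf{F}\mathbf{R}+\mathbf{R}\mathbf{F}=0$ it uses the location of the bubble: since an ideal limit can only have its bubble point $z$ on the seam, while $\delta_i$ (hence a generic divisor $V(\delta_i)$) is disjoint from the seam, no sequence in $M^+_{vu}(\alpha,\beta)_2\cap V(\delta_i)$ can converge to such an ideal connection, so the cut-down space genuinely is a compact $1$-manifold with the stated boundary. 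Your remark that $\delta_i$ lies away from the skein balls addresses the compatibility of $V(\delta_i)$ with the metric splittings, but not this bubbling issue; without the evenness argument in the first case and the seam-avoidance argument in the second, the proof as you have outlined it would fail.
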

\begin{proof}
The two equalities are derived by counting boundary points of 1-dimensional moduli spaces $\breve{M}_{vu}^+(\alpha,\beta)_1$ 
and ${M}_{vu}^+(\alpha,\beta)_2 \cap V(\delta_i)$ respectively. 
The boundary of $\breve{M}_{vu}^+(\alpha,\beta)_1$ consists of  
$$\breve{M}_{vw}(\alpha,\eta)_0  \times \breve{M}_{wu}(\eta,\beta)_0$$
where $v\le w\le u$. The moduli space
$\breve{M}_{vu}^+(\alpha,\beta)_1$ may have open ends coming from bubbles. But
the number of such ends is always an even number by the argument in \cite{KM-jsharp}*{Section 3.3}. Since we are working in characteristic 2,
we can ignore such ends. Therefore we have
\begin{equation*}
\sum_{w,\eta}\# \breve{M}_{vw}(\alpha,\eta)_0 \cdot \# \breve{M}_{wu}(\eta,\beta)_0=0
\end{equation*}
This implies the component of $\mathbf{F}\mathbf{F}$ mapping $C_v$ to $C_u$ is $0$. Since $v,u$ are arbitrary, this completes the proof
of the first equality. 

The boundary of ${M}_{vu}^+(\alpha,\beta)_2 \cap V(\delta_i)$ consists of
\begin{equation*}
 ({M}_{vw}(\alpha,\eta)_1\cap V(\delta_i))  \times \breve{M}_{wu}(\eta,\beta)_0
\end{equation*}
and
\begin{equation*}
\breve{M}_{vw}(\alpha,\eta)_0  \times  ({M}_{wu}(\eta,\beta)_1\cap V(\delta_i))
\end{equation*}
where $v\le w\le u$. We still need to exclude the possible appearance of bubbles.
According to \cite{KM-jsharp}*{Section 3.3},
a codimension-2 bubble can only arise in the situation that $v=u, \alpha=\beta$ and there is a sequence
of connections $A_l\in M_{vv}(\alpha,\alpha)_2$ such that
\begin{equation*}
\xymatrix{
  A_l|_{S^3\times \mathbb{R}\setminus z} \ar[r] & \widetilde{\alpha}|_{S^3\times \mathbb{R}\setminus z}
  } 
\end{equation*} 
where $\widetilde{\alpha}$ represents the product trajectory obtained by pulling $\alpha$ back to the product cobordism and
$z$ is the bubble point on the \emph{seam} of the orbifold points. Since $\delta_i$ is disjoint from the seam and $V(\delta_i)$ is generic,
a sequence of connections in ${M}_{vu}^+(\alpha,\beta)_2 \cap V(\delta_i)$ can never converge to an ideal connection 
$(\widetilde{\alpha},z)$ as above. Therefore we have
\begin{equation}\label{M-FR+RF}
\begin{aligned}
\sum_{w,\eta}\# ({M}_{vw}(\alpha,\eta)_1\cap V(\delta_i))\cdot \# \breve{M}_{wu}(\eta,\beta)_0 + \\
\sum_{w,\eta}\#\breve{M}_{vw}(\alpha,\eta)_0  \cdot \# ({M}_{wu}(\eta,\beta)_1\cap V(\delta_i)) =0
 \end{aligned}
\end{equation}
This implies the component of $\mathbf{F}\mathbf{R}+\mathbf{R}\mathbf{F}$ mapping $C_v$ to $C_u$ is $0$.
Since $v,u$ are arbitrary, this completes the proof of the second equality. 

See \cite{KM:Kh-unknot} for more details on the counting argument. Also compare the proof of 
Proposition \ref{CU-chain} where a more general situation is discussed.
\end{proof}

\begin{PR}\label{cube-quasi-iso}
The chain complex 
\begin{equation*}
 (\mathbf{C},\mathbf{F})
\end{equation*}
is quasi-isomorphic to the Floer chain complex $C(\Gamma)$ for $J^\sharp(\Gamma)$. Moreover, the induced isomorphism 
\begin{equation*}
 H(\mathbf{C})\cong H(C(\Gamma))=J^\sharp(\Gamma)
\end{equation*}
intertwines with the induced action $\mathbf{R}_\ast$ and $X_i$.
\end{PR}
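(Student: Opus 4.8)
The plan is to argue by induction on the number $n$ of crossings of $D$. When $n=0$ the cube $\{0,1\}^{n}$ has a single vertex $v$, the complex $(\mathbf{C},\mathbf{F})$ is literally the Floer complex $(C_v,d_v)=C(\Gamma)$, and $\mathbf{R}=r_{vv}$ induces $X_i$ by the defining property of $r_{vv}$ recalled just before Proposition \ref{CF-chain}; so the base case is immediate.

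For the inductive step I would single out one crossing $c$ of $D$ and split the cube $\{0,1\}^{n}=\{0,1\}^{n-1}\times\{0,1\}$ along the $c$-coordinate, writing $\mathbf{C}=\mathbf{C}_0\oplus\mathbf{C}_1$ with $\mathbf{C}_j=\bigoplus_{v'\in\{0,1\}^{n-1}}C_{(v',j)}$. Since the surfaces $S_{vu}^{+}$, the families of bifold metrics $\breve G_{vu}$, and hence the maps $f_{vu}$ and $r_{vu}$ are built from skein regions not involving $c$ as long as the $c$-coordinate is held fixed, $(\mathbf{C}_0,\mathbf{F}|_{\mathbf{C}_0})$ is exactly the complex of this Proposition associated to the diagram $D_0$ obtained by $0$-resolving $c$ (a diagram with $n-1$ crossings, for a spatial web $\Gamma_0$), and similarly $(\mathbf{C}_1,\mathbf{F}|_{\mathbf{C}_1})$ is the complex for $D_1$ and a spatial web $\Gamma_1$; here $(\Gamma_0,\Gamma_1,\Gamma)$ play the roles of $(L_0,L_1,L_2)$ (or $(L_0',L_1',L_2')$) at $c$. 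Thus $(\mathbf{C},\mathbf{F})$ is, up to an overall shift, the mapping cone of the chain map $\Phi=\sum_{v'\le u'}f_{(v',0),(u',1)}\colon \mathbf{C}_0\to \mathbf{C}_1$, and by the inductive hypothesis $\mathbf{C}_0$ and $\mathbf{C}_1$ are quasi-isomorphic to $C(\Gamma_0)$ and $C(\Gamma_1)$, the quasi-isomorphisms intertwining the restrictions of $\mathbf{R}$ with the $X_i$-actions.

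The crux will be to identify $\Phi$, through these quasi-isomorphisms, with the skein cobordism map $f_{0,1}\colon C(\Gamma_0)\to C(\Gamma_1)$ of Proposition \ref{3-gon} up to chain homotopy. The diagonal component $f_{(v',0),(v',1)}$ of $\Phi$ is the Floer-level map attached to the skein cobordism at $c$ over the planar web $D_{v'}$, while the off-diagonal components $f_{(v',0),(u',1)}$ with $v'<u'$ are coherence homotopies produced by the compactified parameter spaces $\breve G^{+}_{(v',0),(u',1)}$; assembled over the cube $\{0,1\}^{n-1}$, this is precisely the statement that the cube-to-Floer identification of the inductive hypothesis is natural with respect to cobordisms supported away from the crossings, and it is extracted from the boundary analysis of the relevant one-dimensional moduli spaces exactly as in the proof of Proposition \ref{CF-chain} (see \cite{KM:Kh-unknot}). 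Granting this, the mapping cone of $\Phi$ is quasi-isomorphic to the mapping cone of $f_{0,1}$; and since $(\Gamma_0,\Gamma_1,\Gamma)$ form a skein triple at $c$, Corollary \ref{3-gon-quasi-iso} furnishes a quasi-isomorphism $f_{0,2}+f_{1,2}$ from that mapping cone onto $C(\Gamma)$. Composing, $(\mathbf{C},\mathbf{F})$ is quasi-isomorphic to $C(\Gamma)$.

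For the module assertion I would observe that $\mathbf{R}$ carries the subcomplex $\mathbf{C}_0$ into itself, with restriction the corresponding chain map for $D_0$, so by induction it realizes $X_i$ on $J^\sharp(\Gamma_0)$ and on $J^\sharp(\Gamma_1)$; moreover each cobordism map used above ($f_{0,1}$, $f_{0,2}$, $f_{1,2}$, and the inductive quasi-isomorphisms) commutes up to homotopy with the dotted-cylinder endomorphism defining $X_i$, because the dot on $e_i\times I$ can be isotoped out of every skein region, as in Section \ref{module-str}. Tracking $X_i$-equivariance through the two mapping-cone identifications then shows that $\mathbf{R}_\ast$ corresponds to $X_i$ on $J^\sharp(\Gamma)$. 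The main obstacle is the naturality statement in the previous paragraph: it is the only step that genuinely uses the geometry of the families of bifold metrics rather than formal homological algebra, and making it precise requires invoking the constructions of \cite{KM:Kh-unknot,KM-jsharp,KM-triangle} in earnest; the sign bookkeeping that complicates the analogous $\mathfrak{sl}(2)$ argument does not arise here since everything is over $\mathbb{F}$.
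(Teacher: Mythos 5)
Your overall strategy is the same as the paper's -- iterate the unoriented skein triangle (Proposition \ref{3-gon}, Corollary \ref{3-gon-quasi-iso}) one crossing at a time, with all analytic input coming from Kronheimer--Mrowka's families of bifold metrics -- but the packaging differs, and the difference matters at exactly the point you flag as the crux. The paper does not split the cube as a mapping cone and appeal to naturality of an inductively given quasi-isomorphism; instead it enlarges the set of resolutions to $\{0,1,2\}^n$ (where ``$2$'' means the crossing is left intact), forms $\mathbf{C}_2=\bigoplus_{v'}C_{2v'}$, and constructs a single explicit map $\mathbf{H}=\sum h_{v,2u'}$ by counting instantons over families of metrics $G_{v,2u'}\cong\mathbb{R}^2\times\mathbb{R}^{|u'-v'|_1}$ on the composite cobordisms $\Sigma_{0v',2u'}^+$; the subtle part of these families is the region where the two skein parameters collide and one instead stretches along a $3$-dimensional sub-bifold (\cite{KM-triangle}, \cite{KM:Kh-unknot}). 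Your ``naturality of the cube-to-Floer identification'' is precisely the content carried by the off-diagonal components $h_{v,2u'}$ over these families, so it is not something ``extracted exactly as in the proof of Proposition \ref{CF-chain}'': before any boundary counting you must construct the new metric families on the composite cobordisms (skein move at $c$ composed with the collapse of the remaining crossings), and that construction is the real work your induction defers. Once that is supplied, your mapping-cone/five-lemma argument and the paper's filtration argument for $\mathbf{H}$ are essentially equivalent.

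There are also two concrete problems in your treatment of the module statement. First, $\mathbf{R}=\sum_{v\le u}r_{vu}$ does \emph{not} carry your half-cube into itself: it has components $r_{(v',0),(u',1)}$ crossing the splitting, so at best $\mathbf{R}$ respects the filtration by the $c$-coordinate and induces the corresponding operators on the sub- and quotient complexes; the argument has to be run in that filtered form. Second, the intertwining of $\mathbf{R}_\ast$ with $X_i$ through the quasi-isomorphism is not obtained by isotoping the dot out of the skein regions: at the chain level the paper must define a second operator $\mathbf{R}':\mathbf{C}\to\mathbf{C}_2$ by cutting down the $1$-dimensional moduli spaces ${M}_{v,2u'}(\alpha,\beta)_1$ over the same metric families with the divisor $V(\delta_i)$, and then prove the homotopy relation $\mathbf{R}_2\mathbf{H}+\mathbf{H}\mathbf{R}+\mathbf{F}_2\mathbf{R}'+\mathbf{R}'\mathbf{F}=0$ by a boundary count in which codimension-two bubbling must be excluded (this uses that $\delta_i$ lies off the seams and that $V(\delta_i)$ is generic, as in the proof of Proposition \ref{CF-chain}). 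Your proposal contains no counterpart of $\mathbf{R}'$ or of this bubble-exclusion step, and the ``commutes up to homotopy because the dot can be moved away'' heuristic does not by itself produce the required chain homotopy over the families of metrics; this is a genuine gap in the second half of the statement.
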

\begin{proof}[Sketch of the proof]
If there is only one crossing in the diagram $D$, then the quasi-isomorphism is given in Corollary \ref{3-gon-quasi-iso}. The general case can be obtained by
``iterating''  Corollary \ref{3-gon-quasi-iso}. Let $v\in\{0,1,2\}^n$, we can resolve $\Gamma$ to obtain a spatial web $D_{v}$ where a ``2-resolution''
means  keeping the crossing without any change. If $v,u \in\{0,1,2\}^n$ and $v\le u$, we still have a cobordism $S_{vu}:D_v\to D_u$ obtained
by composing the skein cobordisms. 
For $v\le u$ and $|u-v|_\infty\le 1$ in $\{0,1,2\}^n$, we can define a family of metrics on $(S^3\times \mathbb{R},S_{vu}^+)$
by shifting the four-balls containing the skein move as before. Let  $C_v$ be the Floer chain complex for $J^\sharp({D_v})$.
Form a chain complex 
\begin{equation*}
(\mathbf{C}_2,\mathbf{F}_2)=(\bigoplus_{v'\in\{0,1\}^{n-1}}C_{2v'},\mathbf{F}_2) 
\end{equation*}  
equipped with an action $\mathbf{R}_2$ by \eqref{fvu=} and \eqref{rvu} as in the definitions of $\mathbf{F}$ and $\mathbf{R}$. 

Suppose $v', u'\in\{0,1\}^{n-1}$ and $v'\le u'$, then a family of metrics parametrized by 
$$G_{0v',2u'}\cong \mathbb{R}^{|2u'-0v|_1}=\mathbb{R}^2\times \mathbb{R}^{|u'-v'|_1}$$  
can be defined on $(S^3\times \mathbb{R}, \Sigma_{0v',2u'}^+)$.  The $\mathbb{R}^{|u'-v'|_1}$ component is obtained by shifting four-ball containing
the skein moves for crossings associated with $v',u'$. Take $(x,y)\in \mathbb{R}^2$. When $y-x\ge 1$, they describe the locations 
of the two skein moves of the first crossing. When $y-x < 1$, the metric is defined by stretching along the collar of a specific 3-dimensional 
sub-bifold of $(S^3\times \mathbb{R}, \Sigma_{0v',2u'}^+)$. See \cite{KM-triangle}*{Section 6} and \cite{KM:Kh-unknot}*{Section 7.2} for
the precise definition. 

Suppose $v\in\{0,1\}^{n}, u'\in\{0,1\}^{n-1}$ and $v\le 2u'$.
Counting points in the 0-dimensional moduli spaces $\breve{M}_{v,2u'}$ over 
$\breve{G}_{v,2u'}:=G_{v,2u'}/\mathbb{R}$ gives a map $\mathbf{H}$ from $\mathbf{C}$ to $\mathbf{C}_2$. 
To be more precise, we define
\begin{equation*}
\mathbf{H}:=\sum h_{v,2u'}
\end{equation*}
where
\begin{equation*}
 h_{v,2u'}:C_v\to C_{2u'}
\end{equation*}
is defined by
\begin{equation*}
 h_{v,2u'}(\alpha)=\sum_\beta \# \breve{M}_{v,2u'}(\alpha,\beta)_0 \cdot \beta
\end{equation*}
The arguments in \cite{KM:Kh-unknot}*{Section 7}
and \cite{KM-triangle} show $\mathbf{H}$ is a quasi-isomorphism. 

We define
\begin{equation*}
\mathbf{R}':\mathbf{C} \to \mathbf{C}_2, ~\mathbf{R}':=\sum r'_{v,2u'}
\end{equation*} 
where
\begin{equation*}
r'_{v,2u'}:C_v\to C_{2u'}
\end{equation*} 
is defined by
\begin{equation*}
r'_{v,2u'}(\alpha):=\sum_\beta \#  ({M}_{v,2u'}(\alpha,\beta)_1 \cap V(\delta_i)) \cdot \beta
\end{equation*} 
Counting boundary points of the 1-dimensional cutting-down moduli space 
$M^+_{v,2u'}\cap V(\delta_i)$, we obtain an equality similar to \eqref{M-FR+RF} (with $u$ replaced by $2u'$), which implies
\begin{equation*}
 \mathbf{R}_2\mathbf{H}+\mathbf{H}\mathbf{R}+\mathbf{F}_2\mathbf{R}'+\mathbf{R}'\mathbf{F}=0
\end{equation*}
This means $\mathbf{H}\mathbf{R}+\mathbf{R}_2\mathbf{H}$ is chain homotopic to zero. 
Hence $\mathbf{H}$ induces an isomorphism on the homologies that intertwines with $\mathbf{R}$ and $\mathbf{R}_2$.  

Iterating the above argument, we finally obtain a quasi-isomorphism between $\mathbf{C}$ and $\mathbf{C}_{2,2,\cdots,2}=C(\Gamma)$. 
Moreover, the induced isomorphism on
homologies intertwines with $\mathbf{R}_\ast$ and  
$\mathbf{R}_{{2,\cdots,2},\ast} =X_i:J^\sharp(\Gamma)\to J^\sharp(\Gamma)$. 
\end{proof}
Now we are ready to prove the main result of this section:
\begin{proof}[Proof of Theorem \ref{s-sequence}]
Using Proposition \ref{cube-quasi-iso} and filtering the cube $\mathbf{C}$ by the sum of coordinates, we obtain a spectral sequence
which converges to $J^\sharp(\Gamma)$ and whose $E_1$-page is
\begin{equation*}
\bigoplus_{v\in \{0,1\}^n} J^\sharp(D_v)
\end{equation*}
The differential on the $E_1$-page is 
\begin{equation*}
\sum_{\substack{v<u\\|v-u|_1=1}} J^\sharp(\Sigma_{vu})
\end{equation*}
When $v,u\in \{0,1\}^n$ only differ at one coordinate, $\Sigma_{vu}$ is just the skein cobordism in Figure \ref{skein-co}. 
By Proposition \ref{J=F} and the definition of the Khovanov homology in Section \ref{sl3Kh}, it is clear that the $E_2$-page is exactly
$\mathcal{H}(\Gamma;\mathbb{F})$. 

On the $E_1$-page, the operator $\mathbf{R}$ becomes the operator $X_i:J^\sharp(D_v)\to J^\sharp(D_v)$. By the discussion at the end of
Section \ref{functor-I}, 
we know that  $\mathbf{R}$ on the $E_2$-page is the operator $X_i$ on $\mathcal{H}(\Gamma;\mathbb{F})$. This completes the
proof.
\end{proof}
\begin{rmk}
Suppose $L$ is a link in $\mathbb{R}^3$ with components $L_1,\cdots, L_m$. Let $R_m$ be the ring
$$\mathbb{Z}[X_1,\cdots, X_m]/(X_1^2,\cdots, X_m^2)$$
The $\mathfrak{sl}(2)$ Khovanov homology of $L$ can be equipped with an $R_m$-module structure \cite{Kh-module}. 
An $R_m$-module structure on the instanton Floer homology $I^\sharp(L;\mathbb{Z})$ is defined implicitly in \cite{KM:Kh-unknot}*{Section 8.3}. 
 The argument in this section can be used to show that the spectral sequence in 
\cite{KM:Kh-unknot}
relating the $\mathfrak{sl}(2)$ Khovanov homology to $I^\sharp(L;\mathbb{Z})$ respects the $R_m$-module structures. 
\end{rmk}

\section{The functor $I$ and the spectral sequence}
In this section we will construct a spectral sequence relating the pointed $\mathfrak{sl}(3)$ Khovanov homology
to the functor $I^\sharp$ 
which we briefly reviewed at the beginning of Section \ref{functor-I}. 

Let $\Gamma$ be a \emph{connected} spatial trivalent graph. The main difference between the definitions of $J^\sharp(\Gamma)$ and $I^\sharp(\Gamma)$ 
is the choice of gauge groups: we use the gauge group $\mathcal{G}$
of determinant-1 over a ball containing $H$ for $J^\sharp$ and the gauge group $\widetilde{\mathcal{G}}$ of
determinant-1 over the whole manifold $S^3$ for $I^\sharp$. Since the definition of $J^\sharp$ uses a larger gauge group, 
we have a (normal) covering map
\begin{equation*}
 \widetilde{B}^\sharp(S^3,\Gamma) \to B^\sharp(S^3,\Gamma)
\end{equation*}
between the configuration spaces for $I^\sharp$ and $J^\sharp$. The deck transformation group is 
${\mathcal{G}}/\widetilde{\mathcal{G}}\cong H^1(S^3\setminus \Gamma;\mathbb{F})$. 
In particular each element in $H_1(S^3\setminus \Gamma;\mathbb{F})$ determines a cohomology class in 
$H^1({B}^\sharp(S^3,\Gamma);\mathbb{F})$ which is already used in Section \ref{J-ss}.

Given a spatial trivalent graph $\Gamma$, let $C(\Gamma)$ be the Floer chain complex for $J^\sharp(L)$. 
Pick edges $e_1,e_2,\cdots, e_m$ such that the meridians around those edges form a basis for $H_1(\mathbb{R}^3\setminus \Gamma, \mathbb{F})$.
Use $\zeta_i$ to denote the cohomology class in $H^1({B}^\sharp(S^3,\Gamma);\mathbb{F})$ determined by $e_i$.
Pick a point $\delta_i\in e_i$ for each edge $e_i$ and use $V(\delta_i)$ to denote the (real) divisor representing $\zeta_i$ used in 
Section \ref{J-ss}.  Suppose $S\subset \{1,\cdots,m\}$, we define a map
\begin{equation*}
U_S: C(\Gamma)\to C(\Gamma)
\end{equation*}
by
\begin{equation}\label{U_S}
U_S(\alpha):=\sum_\beta \sum_{[\gamma]\in \breve{M}(\alpha,\beta)_0 } (\prod_{i\in S} \sharp ([\gamma]\cap V(\delta_i))) \cdot \beta
\end{equation}
We need to explain the notation in the above formula. Recall $\breve{M}(\alpha,\beta)_0={M}(\alpha,\beta)_1/\mathbb{R}$
is the 0-dimensional unparameterized moduli space with limiting connections $\alpha$ and $\beta$ on the ends, which is the quotient of the
1-dimensional moduli space of trajectories by the translation action. In the formula, $[\gamma]$ as an element in $\breve{M}(\alpha,\beta)_0$ 
represents a translation equivalent class in ${M}(\alpha,\beta)_1$, which is isomorphic to a real line. In the intersection 
$[\gamma]\cap V(\delta_i)$, $[\gamma]$ is viewed as a connected component of  ${M}(\alpha,\beta)_1$. 
Notice that
since we are working with $\mathbb{F}$-coefficients, only the parity matters in the counting. The map
$U_\emptyset$ is nothing but the Floer differential. The map $U_{\{i\}}:C(\Gamma)\to C(\Gamma)$ induces 
$X_i:J^\sharp(\Gamma)\to J^\sharp(\Gamma)$ which is the map used to define the 
$\mathcal{R}_\Gamma$-module structure on $J^\sharp(\Gamma)$ in Section \ref{functor-I}.

\begin{PR}\cite{KM-jsharp}*{Section 8.5}\label{J-I}
Let $(\widetilde{C}(\Gamma),\mathcal{D})$ be the chain complex defined by
$$
\widetilde{C}(\Gamma)=\bigoplus_{S\subset \{1,\cdots,m\}} C_S(\Gamma),~ \mathcal{D}=\sum_{A\subset B\subset \{1,\cdots,m\}} \mathcal{D}^{A,B} 
$$
where each $C_S(\Gamma)$ is a copy of $C(\Gamma)$ and $\mathcal{D}^{A,B}:C_A\to C_B$ is $U_{B\setminus A}$. Then 
\begin{equation*}
H(\widetilde{C}(\Gamma),\mathcal{D})\cong I^\sharp(\Gamma)
\end{equation*}
Therefore by filtering $\widetilde{C}(\Gamma)$ by the cardinal of $S$, we obtain a spectral sequence whose $E_1$-page is $2^m$ copies of 
$J^\sharp(\Gamma)$ and which converges to $I^\sharp(\Gamma)$.
\end{PR}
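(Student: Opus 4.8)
The plan is to recognize $(\widetilde{C}(\Gamma),\mathcal{D})$ as an isomorphic copy of the Floer chain complex computing $I^\sharp(\Gamma)$, and then to read off the spectral sequence from the evident filtration by the cardinality of $S$. The starting point is the covering-space description already recalled above: $\widetilde{B}^\sharp(S^3,\Gamma)\to B^\sharp(S^3,\Gamma)$ is a normal cover with deck group $G=\mathcal{G}/\widetilde{\mathcal{G}}\cong H^1(S^3\setminus\Gamma;\mathbb{F})$, and the chosen meridians identify $G\cong(\mathbb{Z}/2)^m$ so that the dual basis consists of the classes $\zeta_1,\dots,\zeta_m$, each $\zeta_i$ being represented in $B^\sharp$ by the divisor $V(\delta_i)$. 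Pulling back to the cover a perturbation for which all moduli spaces downstairs are regular, the perturbed Chern--Simons critical points upstairs are exactly the preimages of those downstairs, each fibre a $G$-torsor; fixing once and for all a lift $\widetilde{\alpha}$ of every generator $\alpha$ of $C(\Gamma)$ then produces $\mathbb{F}$-linear identifications between the Floer complex of $\widetilde{B}^\sharp$, the module $C(\Gamma)\otimes_{\mathbb{F}}\mathbb{F}[G]$, and $\bigoplus_{S\subseteq\{1,\dots,m\}}C_S(\Gamma)$, the last one via the characteristic $2$ isomorphism $\mathbb{F}[\mathbb{Z}/2]\cong\mathbb{F}[s]/(s^2)$, $t\mapsto 1+s$, applied in each of the $m$ tensor factors.

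The core step is to check that, under these identifications, the Floer differential of the cover is exactly $\mathcal{D}$. A class $[\gamma]\in\breve{M}(\alpha,\beta)_0$ downstairs lifts, starting from $\widetilde{\alpha}$, to a flow line ending on $g_\gamma\cdot\widetilde{\beta}$ for a unique $g_\gamma\in G$, and the point is that $g_\gamma$ is the element of $(\mathbb{Z}/2)^m$ whose $i$-th coordinate is $\#([\gamma]\cap V(\delta_i))\bmod 2$: the cover is classified by the tuple $(\zeta_1,\dots,\zeta_m)$, so the sheet-change along $[\gamma]$ is obtained by pairing each $\zeta_i$ with $[\gamma]$, and because $V(\delta_i)$ represents $\zeta_i$ this pairing is the intersection parity (the divisors being generic, these intersections are transverse, as already assumed in Section \ref{J-ss}). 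Consequently the Floer differential of the cover sends $\widetilde{\alpha}$ to $\sum_{\beta,[\gamma]}g_\gamma\cdot\widetilde{\beta}$; expanding $g_\gamma=\prod_{i\in T}(1+s_i)$ with $T=\{i:\#([\gamma]\cap V(\delta_i))\text{ is odd}\}$ and noting that the transition $C_A\to C_B$ it contributes has coefficient $1$ precisely when $A\subseteq B$ and $B\setminus A\subseteq T$ --- equivalently, precisely when the factor $\prod_{i\in B\setminus A}\#([\gamma]\cap V(\delta_i))$ occurring in the definition of $U_{B\setminus A}$ is odd --- identifies this differential with $\sum_{A\subseteq B}U_{B\setminus A}=\mathcal{D}$. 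Hence $H(\widetilde{C}(\Gamma),\mathcal{D})\cong I^\sharp(\Gamma)$. One may also avoid the covering-space bookkeeping and instead verify $\mathcal{D}^2=0$ and the identification with $I^\sharp(\Gamma)$ directly, by counting ends of one-dimensional cut-down moduli spaces, the bubbling ends cancelling mod $2$ exactly as in the proof of Proposition \ref{CF-chain}; this is the route of \cite{KM-jsharp}*{Section 8.5}.

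For the spectral sequence, filter $\widetilde{C}(\Gamma)$ by $\mathcal{F}^p:=\bigoplus_{|S|\geq p}C_S(\Gamma)$. Since $\mathcal{D}^{A,B}=U_{B\setminus A}$ vanishes unless $A\subseteq B$, the differential satisfies $\mathcal{D}(\mathcal{F}^p)\subseteq\mathcal{F}^p$, and the filtration is finite, so the associated spectral sequence converges to $H(\widetilde{C}(\Gamma),\mathcal{D})=I^\sharp(\Gamma)$. Its $E_0$-differential is the diagonal part $\bigoplus_S\mathcal{D}^{S,S}=\bigoplus_S U_\emptyset$, i.e. the Floer differential for $J^\sharp(\Gamma)$ on each summand; therefore $E_1=\bigoplus_{S\subseteq\{1,\dots,m\}}J^\sharp(\Gamma)$ is $2^m$ copies of $J^\sharp(\Gamma)$, as claimed.

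The genuinely delicate point is the identification used in the second paragraph of the deck-transformation class $g_\gamma$ of a lifted trajectory with the tuple of intersection parities $\#([\gamma]\cap V(\delta_i))$; this rests on the explicit construction of the divisors $V(\delta_i)$ and their relation to the classes $\zeta_i$ in \cite{KM-jsharp}. The analytic ingredients --- arranging perturbations and divisors so that every relevant moduli space, upstairs and downstairs, is regular, and controlling bubbling (which contributes only in even numbers, as in Proposition \ref{CF-chain}) --- are already in place in \cite{KM-jsharp} and \cite{KM:Kh-unknot}; indeed this proposition is essentially \cite{KM-jsharp}*{Section 8.5}, so carrying it out is largely a matter of recalling and repackaging their work.
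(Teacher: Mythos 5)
Your proposal is correct in outline, and it supplies an argument where the paper itself gives none: Proposition \ref{J-I} is imported directly from \cite{KM-jsharp}*{Section 8.5}, with no in-paper proof to compare against. Your route --- identify the Floer complex computing $I^\sharp(\Gamma)$ with $C(\Gamma)\otimes_{\mathbb{F}}\mathbb{F}[G]$ for $G=\mathcal{G}/\widetilde{\mathcal{G}}\cong H^1(S^3\setminus\Gamma;\mathbb{F})\cong(\mathbb{Z}/2)^m$ via the covering $\widetilde{B}^\sharp\to B^\sharp$, record the sheet change of each index-one trajectory through its intersection parities with the divisors $V(\delta_i)$, and convert the group ring to $\mathbb{F}[x_1,\dots,x_m]/(x_i^2)$ by $x_i=1+y_i$ so that the twisted differential expands as $\sum_{A\subseteq B}U_{B\setminus A}=\mathcal{D}$ --- is exactly the mechanism the paper gestures at in the remark following Theorem \ref{I-ss}, and your filtration argument for the $E_1$-page is immediate since $U_\emptyset$ is the Floer differential for $J^\sharp(\Gamma)$.

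One point should be made explicit rather than asserted. For an arbitrary fixed choice of lifts $\widetilde{\alpha}$, the sheet change $g_\gamma$ is not literally the parity vector $I(\gamma)=\bigl(\#([\gamma]\cap V(\delta_i))\bmod 2\bigr)_i$. What the duality of $V(\delta_i)$ and $\zeta_i$ gives a priori is the statement for \emph{loops}: the monodromy of the cover along a loop is its intersection parity vector. Hence $g_\gamma+I(\gamma)$ (additive notation in $(\mathbb{Z}/2)^m$) depends only on the endpoints $\alpha,\beta$, so it can be written as $h(\alpha)+h(\beta)$ for a function $h$ on generators (one base generator per connected component of $B^\sharp$); renormalizing the lifts by $h$ forces $g_\gamma=I(\gamma)$, or equivalently the induced diagonal change of basis of $\bigoplus_S C_S(\Gamma)$ conjugates the lifted differential into $\mathcal{D}$ --- either way $H(\widetilde{C}(\Gamma),\mathcal{D})\cong I^\sharp(\Gamma)$, which is all the statement needs. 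Relatedly, your aside that one could instead ``verify $\mathcal{D}^2=0$ and the identification with $I^\sharp(\Gamma)$ directly by counting ends'' is only half right: the counting argument of Proposition \ref{CU-chain} (with the mod-$2$ cancellation of bubble ends as in Proposition \ref{CF-chain}) does give $\mathcal{D}^2=0$, but identifying the resulting homology with $I^\sharp(\Gamma)$ still requires the covering/holonomy comparison in some form. With the normalization point spelled out, and the regularity and compactness inputs from \cite{KM-jsharp} that you already invoke (which also guarantee that critical points and trajectories upstairs are precisely the lifts of those downstairs), your argument is a faithful reconstruction of the cited result.
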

From this proposition, we obtain an analogue of Proposition \ref{koszul}.
\begin{COR}\label{koszul-J-I}
Let $\Gamma$ and $\{e_1,\cdots,e_m\}$ be given as above.  
There is a spectral sequence converging to $I^\sharp(\Gamma)$ whose $E_1$-page is the Koszul complex 
$$
K(s,J^\sharp(\Gamma))
$$
where $s=(X_1,\cdots,X_m)\in \mathcal{R}_\Gamma ^{\oplus m}$. In particular, we have
\begin{equation*}
\rank_\mathbb{F} H(K(s,J^\sharp(\Gamma)) )\ge \rank_\mathbb{F}I^\sharp(\Gamma)) 
\end{equation*}
\end{COR}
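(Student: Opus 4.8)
The plan is to take the spectral sequence already produced in Proposition \ref{J-I} and identify its $E_1$-page, together with the differential $d_1$, with the Koszul complex. Recall that Proposition \ref{J-I} computes $I^\sharp(\Gamma)$ as the homology of the filtered complex $(\widetilde{C}(\Gamma),\mathcal{D})$, an $m$-dimensional cube of copies $C_S(\Gamma)$ of the Floer complex $C(\Gamma)$ with $\mathcal{D}^{A,B}=U_{B\setminus A}$ for $A\subset B$. First I would filter by the cardinality $|S|$ and pass to the associated graded: the $E_0$-differential is the part of $\mathcal{D}$ preserving $|S|$, namely $U_\emptyset$ on each summand, which is the Floer differential of $C(\Gamma)$. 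Hence $E_1=\bigoplus_{S\subset\{1,\dots,m\}}J^\sharp(\Gamma)$, in agreement with Proposition \ref{J-I}.

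Next I would compute $d_1$. The induced differential on $E_1$ is the part of $\mathcal{D}$ raising $|S|$ by exactly one, i.e.\ the collection of maps $U_{\{i\}}\colon C_S(\Gamma)\to C_{S\cup\{i\}}(\Gamma)$ for $i\notin S$; by the remark following \eqref{U_S} these induce on homology precisely the operators $X_i\colon J^\sharp(\Gamma)\to J^\sharp(\Gamma)$ defining the $\mathcal{R}_\Gamma$-module structure. (The maps $U_S$ with $|S|\ge 2$ raise the filtration by more than one and so only contribute to the higher differentials $d_r$, $r\ge 2$.) Therefore $(E_1,d_1)$ is the cube with a copy of $J^\sharp(\Gamma)$ at every vertex $S$ and edge map $S\to S\cup\{i\}$ equal to $X_i$; as in the discussion preceding Proposition \ref{koszul} (and with signs irrelevant over $\mathbb{F}$), this is exactly the Koszul complex $K(s,J^\sharp(\Gamma))$ for $s=(X_1,\dots,X_m)\in\mathcal{R}_\Gamma^{\oplus m}$. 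Thus $E_2=H(K(s,J^\sharp(\Gamma)))$, and since the filtration is finite the spectral sequence converges to $H(\widetilde{C}(\Gamma),\mathcal{D})\cong I^\sharp(\Gamma)$.

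Finally, the rank inequality follows from convergence over the field $\mathbb{F}$: each $E_{r+1}$ is the homology of $(E_r,d_r)$, so $\rank_\mathbb{F}E_{r+1}\le\rank_\mathbb{F}E_r$, while $E_\infty$ (equal to $E_N$ for $N$ large) has total $\mathbb{F}$-dimension equal to $\rank_\mathbb{F}I^\sharp(\Gamma)$; chaining from $r=2$ gives $\rank_\mathbb{F}H(K(s,J^\sharp(\Gamma)))=\rank_\mathbb{F}E_2\ge\rank_\mathbb{F}E_\infty=\rank_\mathbb{F}I^\sharp(\Gamma)$. There is essentially no analytic input left, since all of it is packaged into Proposition \ref{J-I} and the identification of $U_{\{i\}}$ with $X_i$; the only point needing care is the bookkeeping — verifying that the filtration-degree-one part of $\mathcal{D}$ is exactly $\bigoplus_i U_{\{i\}}$ and that the resulting cube matches the indexing and sign conventions of $K(s)\otimes_{\mathcal{R}_\Gamma}J^\sharp(\Gamma)$ — which over $\mathbb{F}$ is immediate. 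I expect this (minor) bookkeeping, together with stating convergence precisely for the finite filtration, to be the only real obstacle.
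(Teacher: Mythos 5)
Your proposal is correct and is essentially the paper's own argument: the corollary is obtained directly from Proposition \ref{J-I} by filtering $\widetilde{C}(\Gamma)$ by $|S|$, noting that the filtration-degree-one components $U_{\{i\}}$ induce the operators $X_i$ on $J^\sharp(\Gamma)$, so that $(E_1,d_1)$ is the Koszul complex $K(s,J^\sharp(\Gamma))$, with the rank inequality following from convergence over the field $\mathbb{F}$. No gaps; this matches the paper's (very terse) derivation step for step.
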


Next we want to combine Proposition \ref{J-I} and Proposition \ref{cube-quasi-iso} to derive a new spectral sequence. 
Now assume $\Gamma$ is a connected oriented spatial web and a diagram $D$ with $n$ crossings for $\Gamma$ is chosen. This is exactly 
the same situation as in Section \ref{J-ss} except the additional requirement on the connectedness of $\Gamma$.  All the constructions
for $\Gamma$ in Section \ref{J-ss} still work and we will continue using the 
notation from there. Assume the the neighborhood $\nu(\delta_i)$ of point $\delta_i\in e_i$ used to define $V(\delta_i)$ 
is disjoint from all the skein moves as before. 
We define
\begin{equation*}
\widetilde{\mathbf{C}}:=\bigoplus_{S\subset \{1,\cdots,m\}}\mathbf{C}_S
\end{equation*}
where $\mathbf{C}_S$ is a copy of $\mathbf{C}$. 
Suppose $v\le u$ in $\{0,1\}^n$ and $S\subset \{1,\cdots,m\} $, we define a map
\begin{equation*}
U_{S,vu}: C_v \to C_{u}
\end{equation*}
by
\begin{equation}\label{USvu}
U_{S,vu}(\alpha):=\sum_\beta \sum_{[\gamma]\in \breve{M}_{vu}(\alpha,\beta)_0 } (\prod_{i\in S} \sharp ([\gamma]\cap V(\delta_i))) \cdot \beta
\end{equation}
This definition is similar to \eqref{U_S} except that we are working on moduli spaces over a family of metrics now. 
When $v=u$, $U_{S,vv}$ is just the map $U_S:C(D_v)\to C(D_v)$.  When $S=\emptyset$, $U_{\emptyset,vu}$ is just the map
$f_{vu}$ in \eqref{fvu}.

Suppose $A\subset B\subset \{1,\cdots,m\}$ and $v\le u$ in $\{0,1\}^n$. Let
$C_{A,v}$ and $C_{B,u}$ be the copies of $C_v$ and $C_u$ in $\mathbf{C}_A$ and $\mathbf{C}_B$ respectively. We define a map
\begin{equation*}
\mathcal{D}^{A,B}_{vu}: C_{A,v} \to C_{B,u}
\end{equation*}
by setting
\begin{equation*}
\mathcal{D}^{A,B}_{vu}= U_{B\setminus A, vu}
\end{equation*}
Now we define
\begin{equation*}
\widetilde{\mathbf{F}}:\widetilde{\mathbf{C}}\to \widetilde{\mathbf{C}}
\end{equation*}
by 
\begin{equation*}
\widetilde{\mathbf{F}}:= \sum_{\substack{A\subset B  \\ v\le u}} \mathcal{D}^{A,B}_{vu}
\end{equation*}
Take a component $\mathbf{C}_S$ of $\widetilde{\mathbf{C}}$, the part of $\widetilde{\mathbf{F}}$ which maps $\mathbf{C}_S$ to itself is the map
$\mathbf{F}$ in \eqref{CF}. 

\begin{PR}\label{CU-chain}
The pair $(\widetilde{\mathbf{C}},\widetilde{\mathbf{F}})$ is a chain complex, i.e.
\begin{equation*}
\widetilde{\mathbf{F}}\widetilde{\mathbf{F}}=0
\end{equation*}
\end{PR}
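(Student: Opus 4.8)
The plan is to run the end-counting argument of Proposition~\ref{CF-chain} once more, now also carrying the divisor data $V(\delta_i)$ along the moduli spaces, as in \cite{KM-jsharp}*{Section~8.5} and \cite{KM:Kh-unknot}*{Section~8.3}. Since $\widetilde{\mathbf{F}}$ has the maps $\mathcal{D}^{A,B}_{vu}=U_{B\setminus A,vu}$ as its matrix entries, it is enough to show that for all $A\subseteq C\subseteq\{1,\dots,m\}$ and all $v\le w$ in $\{0,1\}^n$ the component of $\widetilde{\mathbf{F}}\widetilde{\mathbf{F}}$ sending $C_{A,v}$ to $C_{C,w}$, namely
\begin{equation*}
\sum_{\substack{A\subseteq B\subseteq C\\ v\le u\le w}} U_{C\setminus B,\,uw}\circ U_{B\setminus A,\,vu},
\end{equation*}
vanishes. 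When $A=C$ this is the identity $\mathbf{F}\mathbf{F}=0$ of Proposition~\ref{CF-chain}, and when $v=w$ it is the relation $\mathcal{D}\mathcal{D}=0$ that makes $(\widetilde{C}(\Gamma),\mathcal{D})$ a complex in Proposition~\ref{J-I}; the point is to treat the cube direction and the divisor direction simultaneously.

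First I would fix a generator $\alpha$ of $C_v$ and a generator $\beta$ of $C_w$ and set $S=C\setminus A$. Unwinding \eqref{USvu}, the $\beta$-coefficient of the displayed sum applied to $\alpha$ counts, over all intermediate $u,\eta$ and all $B$ with $A\subseteq B\subseteq C$, pairs $[\gamma_1]\in\breve{M}_{vu}(\alpha,\eta)_0$, $[\gamma_2]\in\breve{M}_{uw}(\eta,\beta)_0$ weighted by $\prod_{i\in B\setminus A}\sharp([\gamma_1]\cap V(\delta_i))\cdot\prod_{j\in C\setminus B}\sharp([\gamma_2]\cap V(\delta_j))$. The object that produces this count is the moduli space $\mathcal{N}_{vw}(\alpha,\beta)$ of \emph{$S$-decorated} trajectories over $G_{vw}$: a point of it is a trajectory in $M_{vw}(\alpha,\beta)$ together with, for each $i\in S$, a choice of one of its intersection points with $V(\delta_i)$ (equivalently, a time $t_i$ at which its restriction to $\nu(\delta_i)\times(-1,1)$ lands on $V(\delta_i)$). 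Each decoration adds one parameter and imposes one codimension-one condition, so for generic divisors $\dim\mathcal{N}_{vw}(\alpha,\beta)=\dim M_{vw}(\alpha,\beta)$, and after dividing by $\mathbb{R}$-translation $\breve{\mathcal{N}}_{vw}(\alpha,\beta)$ is a one-manifold for the gradings relevant to the displayed sum.

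Then I would count the ends of the partial compactification $\breve{\mathcal{N}}^{+}_{vw}(\alpha,\beta)$, obtained by adjoining broken decorated trajectories over broken metrics in $\partial\breve{G}^{+}_{vw}$. A generic end is a once-broken trajectory $[\gamma_1]\#[\gamma_2]$ with $[\gamma_1]$ over $\breve{G}_{vu}$ and $[\gamma_2]$ over $\breve{G}_{uw}$ for some $v\le u\le w$, the $|S|$ decoration times having split between the two pieces; writing $B=A\cup\{i\in S:t_i\text{ stays on }[\gamma_1]\}$, the number of such ends is the $\beta$-coefficient of $\bigl(U_{C\setminus B,uw}\circ U_{B\setminus A,vu}\bigr)(\alpha)$, and summing over $B,u,\eta$ recovers the displayed sum exactly. (Decoration times for distinct indices never collide because the $\nu(\delta_i)$ are pairwise disjoint and disjoint from the skein regions, and no $t_i$ runs off to $\pm\infty$ along an unbroken trajectory since the flat limits $\alpha,\beta$ restrict away from the $V(\delta_i)$.) The remaining ends are bubbling configurations; exactly as in the proof of Proposition~\ref{CF-chain} these occur in pairs modulo $2$, and no codimension-two bubble can meet a $V(\delta_i)$ because $\nu(\delta_i)$ is disjoint from the seams. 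Since $\breve{\mathcal{N}}^{+}_{vw}(\alpha,\beta)$ is a compact one-manifold, its total number of ends is even, so the once-broken contribution — the displayed sum — is $0$ modulo $2$. As $A,C,v,w,\alpha,\beta$ are arbitrary, this gives $\widetilde{\mathbf{F}}\widetilde{\mathbf{F}}=0$.

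The part I expect to be real work, rather than bookkeeping, is the compactness and transversality input behind this end analysis: showing that $\partial\breve{\mathcal{N}}^{+}_{vw}(\alpha,\beta)$ consists only of once-broken decorated trajectories and bubbling configurations, ruling out collisions of decoration points and any interaction of bubbles with the divisors, and choosing one system of perturbations and generic divisors $V(\delta_i)$ that makes all the spaces $M_{vw}(\alpha,\beta)$, their cut-downs by the $V(\delta_i)$, and the metric families $\breve{G}_{vw}$ regular simultaneously. All of these ingredients are already available — for links in \cite{KM:Kh-unknot,KM-triangle}, and in the generality of webs and foams in \cite{KM-jsharp} — so the proof should be the common refinement of the arguments for Propositions~\ref{CF-chain} and~\ref{J-I}, with nothing genuinely new required.
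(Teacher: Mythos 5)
Your proposal is correct and is essentially the paper's argument: both proofs count boundary points of compactified one-dimensional (unparametrized) moduli spaces over the metric families $\breve{G}_{vw}$, use that a once-broken limit distributes the divisor conditions $V(\delta_i)$ between the two pieces (producing exactly the sum over splittings $S_1\sqcup S_2=S$, i.e.\ over intermediate $B$), and rule out any interference from bubbles because bubbling occurs only on the seam, away from the $\nu(\delta_i)$. The only difference is bookkeeping: the paper intersects each two-dimensional component $N$ with one divisor $V(\delta_i)$ at a time and multiplies the resulting mod-2 end relations, whereas you package the same data as a single decorated (fiber-product) moduli space $\breve{\mathcal{N}}$, which requires the same analytic inputs plus the routine extra genericity you already flag.
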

\begin{proof}
The proof is very similar to the proof of Proposition \ref{CF-chain}: the equality in the proposition is obtained by counting the boundary points of
1-dimensional moduli spaces. The current situation is a little bit more subtle, so we will do the counting argument more carefully here.

Take $A\subset B\subset \{1,\cdots,m\}$ and  $v\le u$ in $\{0,1\}^n$. Without loss of generality, we assume $A=\emptyset$
and $B=S$. We want to show that the component of  $\widetilde{\mathbf{F}}\widetilde{\mathbf{F}}$ mapping 
$\mathbf{C}_{\emptyset,v}$ to $\mathbf{C}_{S,u}$ is $0$. When $S=\emptyset$, 
 this is exactly the content of Proposition \ref{CF-chain}. So we assume $S\neq 0$ from now on.

The 1-dimensional boundary strata of $M_{vu}^+(\alpha,\beta)_2$ consist of 
\begin{enumerate}
  \item The strata corresponding to trajectories sliding off the incoming end of the cobordism, having the form
        \begin{equation*}
        \breve{M}_{vw}(\alpha,\eta)_0  \times M_{wu}^+(\eta,\beta)_1
        \end{equation*}
        where $v\le w \le u$.
  \item The strata corresponding to trajectories sliding off the ongoing end of the cobordism, having the form
        \begin{equation*}
          {M}_{vw}(\alpha,\eta)_1  \times \breve{M}_{wu}^+(\eta,\beta)_0
        \end{equation*}
        where $v\le w \le u$.
\end{enumerate}
Let $N\subset M_{vu}(\alpha,\beta)_2$ be a connected component and $\breve{N}:=N/\mathbb{R}$ is non-compact. As a non-compact 1-manifold
$\breve{N}$ must be an open interval. There are two possible sources for the ends of $\breve{N}$: the broken trajectories and
codimension-2 bubbles. By  \cite{KM-jsharp}*{Section 3.3}, 
a codimension-2 bubble can only arise in the situation that $v=u, \alpha=\beta$ and there is a sequence
of connections $A_l\in M_{vv}(\alpha,\alpha)$ such that
\begin{equation*}
\xymatrix{
  A_l|_{S^3\times \mathbb{R}\setminus z} \ar[r] & \widetilde{\alpha}|_{S^3\times \mathbb{R}\setminus z}
  } 
\end{equation*} 
where $\widetilde{\alpha}$ represents the product trajectory obtained by pulling $\alpha$ back to the product cobordism and
$z$ is the bubble point on the \emph{seam} of the orbifold points. Since $\delta_i$ is disjoint from the seam and $V(\delta_i)$ is generic,
the ends from bubbling have no contribution to $\overline{N}\cap V(\delta_i)$. 
Let $N^+\subset  M_{vu}^+(\alpha,\beta)_2$ be the partial compactification of $N$ obtained by adding broken trajectories. 
Then $N^+\cap V(\delta_i)$ ($i\in S$) is a compact 1-manifold possibly with boundary. If $\breve{N}$ has
only one end from broken trajectory 
$$([\gamma_1],[\gamma_2])\in \breve{M}_{vw}(\alpha,\eta)_0  \times \breve{M}_{wu}^+(\eta,\beta)_0$$ 
then $N$ has two 1-dimensional boundary strata
$$
\{[\gamma_1]\}\times [\gamma_2] \subset \breve{M}_{vw}(\alpha,\eta)_0  \times M_{wu}^+(\eta,\beta)_1,
$$
and
$$
[\gamma_1]\times \{[\gamma_2]\} \subset {M}_{vw}(\alpha,\eta)_1  \times \breve{M}_{wu}^+(\eta,\beta)_0.
$$
Since $N^+\cap V(\delta_i)$ is a compact 1-manifold with boundaries $[\gamma_1]\cap V(\delta_i)$ and $[\gamma_2]\cap V(\delta_i)$, we have
\begin{equation}\label{1end}
\#[\gamma_1]\cap V(\delta_i)+\#[\gamma_2]\cap V(\delta_i)=0
\end{equation}
Similarly, if both ends of $\breve{N}^+$ are broken trajectories 
$$([\gamma_1],[\gamma_2])~ \text{and} ~([\gamma_1'],[\gamma_2'])$$
then 
\begin{equation}\label{2end}
\#[\gamma_1]\cap V(\delta_i)+\#[\gamma_2]\cap V(\delta_i)=\#[\gamma_1']\cap V(\delta_i)+\#[\gamma_2']\cap V(\delta_i)
\end{equation}
Now \eqref{1end} implies
\begin{align}
0&=\prod_{i\in S} [\#[\gamma_1]\cap V(\delta_i)+\#[\gamma_2]\cap V(\delta_i) ] \nonumber \\
0&=\sum_{S_1\sqcup S_2=S}  [\prod_{i\in S_1}\#[\gamma_1]\cap V(\delta_i)  \prod_{j\in S_2}\#[\gamma_2]\cap V(\delta_j)]  \label{T1}
\end{align}
and \eqref{2end} implies
\begin{equation*}
\prod_{i\in S} [\#[\gamma_1]\cap V(\delta_i)+\#[\gamma_2]\cap V(\delta_i)] =\prod_{i\in S} [\#[\gamma_2']\cap V(\delta_i)+\#[\gamma_2']\cap V(\delta_i) ] 
\end{equation*}
which is the same as 
\begin{align}
\sum_{S_1\sqcup S_2=S}  [\prod_{i\in S_1}\#[\gamma_1]\cap V(\delta_i)  \prod_{j\in S_2}\#[\gamma_2]\cap V(\delta_j)] = \nonumber
\\
\sum_{S_1\sqcup S_2=S}  [\prod_{i\in S_1}\#[\gamma_1']\cap V(\delta_i)  \prod_{j\in S_2}\#[\gamma_2']\cap V(\delta_j)]  \label{T2}
\end{align}
Take the sum of \eqref{T1} or \eqref{T2} through all the connected components $N$, we obtain
\begin{equation}\label{FF=0}
\sum_{v\le w\le u} \sum_{S_1\sqcup S_2=S}  \langle U_{S_2,wu}\circ U_{S_1,vw}(\alpha), \beta \rangle =0 
\end{equation}
where $ \langle a, \beta \rangle$ denotes the coefficient of the generator $\beta$ in $a$. By the definition of $\widetilde{\mathbf{F}}$, \eqref{FF=0}
says 
the component of  $\widetilde{\mathbf{F}}\widetilde{\mathbf{F}}$ mapping 
$\mathbf{C}_{\emptyset,v}$ to $\mathbf{C}_{S,u}$ is $0$. This completes the proof.
\end{proof}

\begin{PR}\label{I-cube-iso}
The chain complex $(\widetilde{\mathbf{C}},\widetilde{\mathbf{F}})$ is quasi-isomorphic to $(\widetilde{C}(\Gamma),\mathcal{D})$ defined 
in Proposition \ref{J-I}. 
\end{PR}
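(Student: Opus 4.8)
The plan is to imitate the proof of Proposition~\ref{cube-quasi-iso}, collapsing the crossings of $D$ one at a time while carrying along the extra cube directions indexed by subsets $S\subset\{1,\dots,m\}$ together with the divisor-cutting-down data. Fix attention on the first crossing and write $\mathbf{C}_2=\bigoplus_{v'\in\{0,1\}^{n-1}}C_{2v'}$, so that $\widetilde{\mathbf{C}}_2:=\bigoplus_{S}(\mathbf{C}_2)_S$ carries a differential $\widetilde{\mathbf{F}}_2$ built from the maps $U_{B\setminus A,\,2v',2u'}$ exactly as $\widetilde{\mathbf{F}}$ is built from the $U_{B\setminus A,vu}$. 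For $v\in\{0,1\}^n$, $u'\in\{0,1\}^{n-1}$ with $v\le 2u'$, I use the family of metrics $G_{v,2u'}$ on $(S^3\times\mathbb{R},\Sigma_{v,2u'}^+)$ from the proof of Proposition~\ref{cube-quasi-iso} (shifting the four-balls of the skein moves, and for the first crossing stretching along the collar of a $3$-dimensional sub-bifold), and define, for $A\subset B$ and $v\le 2u'$,
\[
\widetilde{h}^{A,B}_{v,2u'}(\alpha):=\sum_\beta\sum_{[\gamma]\in\breve{M}_{v,2u'}(\alpha,\beta)_0}\Bigl(\prod_{i\in B\setminus A}\#\bigl([\gamma]\cap V(\delta_i)\bigr)\Bigr)\cdot\beta,
\]
and then $\widetilde{\mathbf{H}}:=\sum\widetilde{h}^{A,B}_{v,2u'}\colon\widetilde{\mathbf{C}}\to\widetilde{\mathbf{C}}_2$. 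When $B\setminus A=\emptyset$ this is the quasi-isomorphism $\mathbf{H}$ of Proposition~\ref{cube-quasi-iso}.

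The first step is to prove that $\widetilde{\mathbf{H}}$ is a chain map, $\widetilde{\mathbf{F}}_2\widetilde{\mathbf{H}}+\widetilde{\mathbf{H}}\widetilde{\mathbf{F}}=0$. As in Propositions~\ref{CF-chain}, \ref{CU-chain} and \ref{cube-quasi-iso}, this follows by counting the boundary points of the $1$-dimensional moduli spaces $M_{v,2u'}^+(\alpha,\beta)_2$ cut down by the divisors $V(\delta_i)$, $i\in S$. The codimension-one strata are the products $\breve{M}(\cdot)_0\times M^+(\cdot)_1$ and $M^+(\cdot)_1\times\breve{M}(\cdot)_0$ over intermediate $v\le w\le 2u'$ (including the broken and collar-stretching degenerations analysed in \cite{KM-triangle}*{Section 6} and \cite{KM:Kh-unknot}*{Section 7}), and the ``product over ordered partitions'' bookkeeping that passed from \eqref{2end} to \eqref{T2} in the proof of Proposition~\ref{CU-chain} converts the count into the sum over $S_1\sqcup S_2=S$ of $U_{S_2}\circ\widetilde{h}_{S_1}$ and $\widetilde{h}_{S_2}\circ U_{S_1}$, which is exactly the relevant component of $\widetilde{\mathbf{F}}_2\widetilde{\mathbf{H}}+\widetilde{\mathbf{H}}\widetilde{\mathbf{F}}$. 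As in Propositions~\ref{CF-chain} and \ref{CU-chain}, one checks that the only codimension-two bubbling occurs on the seam of the orbifold locus, and there the disjointness of $\delta_i$ from the seam together with genericity of $V(\delta_i)$ kills any contribution. Signs play no role since we are over $\mathbb{F}$.

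The second step is to prove $\widetilde{\mathbf{H}}$ is a quasi-isomorphism. Filter $\widetilde{\mathbf{C}}$ and $\widetilde{\mathbf{C}}_2$ by the cardinality $|S|$; every component of $\widetilde{\mathbf{F}}$, $\widetilde{\mathbf{F}}_2$ and $\widetilde{\mathbf{H}}$ preserves or increases $|S|$, so these are filtered complexes and $\widetilde{\mathbf{H}}$ a filtered map, with the filtration bounded by $0\le|S|\le m$. On the associated graded $\widetilde{\mathbf{H}}$ becomes $2^m$ copies of $\mathbf{H}\colon\mathbf{C}\to\mathbf{C}_2$, a quasi-isomorphism by Proposition~\ref{cube-quasi-iso}; by the comparison theorem for spectral sequences of bounded filtered complexes, $\widetilde{\mathbf{H}}$ is a quasi-isomorphism. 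Composing the maps obtained by collapsing the crossings one at a time, $\widetilde{\mathbf{C}}\to\widetilde{\mathbf{C}}_2\to\cdots\to\widetilde{\mathbf{C}}_{2,\dots,2}$, and noting that $\mathbf{C}_{2,\dots,2}=C(\Gamma)$ while $U_{B\setminus A,(2,\dots,2)(2,\dots,2)}=U_{B\setminus A}$ as in \eqref{U_S}, identifies $\widetilde{\mathbf{C}}_{2,\dots,2}$ with $(\widetilde{C}(\Gamma),\mathcal{D})$ of Proposition~\ref{J-I} and gives the desired quasi-isomorphism.

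I expect the main obstacle to be the first step: organising the boundary count for the combined family of metrics so that the divisor-intersection combinatorics (the sum over $S_1\sqcup S_2=S$) and the collar-stretching/broken-trajectory combinatorics mesh correctly, and confirming that cutting down by several divisors simultaneously over a positive-dimensional family introduces no new degeneration beyond the seam bubbling already excluded. All the needed analytic input is contained in \cite{KM-jsharp,KM-triangle,KM:Kh-unknot} and in Propositions~\ref{CF-chain}, \ref{CU-chain} and \ref{cube-quasi-iso}, so this should be bookkeeping rather than new analysis.
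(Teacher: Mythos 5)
Your proposal is correct and follows essentially the same route as the paper: define $\widetilde{\mathbf{C}}_2$ with differential built from the maps $U_{S,2v',2u'}$, let $\widetilde{\mathbf{H}}$ have components $U_{B\setminus A,v,2u'}$ (your $\widetilde{h}^{A,B}_{v,2u'}$), prove the chain-map identity by the boundary count of $M^+_{v,2u'}(\alpha,\beta)_2\cap V(\delta_i)$ as in Proposition \ref{CU-chain} with seam bubbles excluded, then filter by $|S|$ so that the induced map on the $E_1$-page is $2^m$ copies of $\mathbf{H}_\ast$ and conclude by the comparison argument, iterating over the crossings to reach $\widetilde{C}(\Gamma)$. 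This matches the paper's proof, with your write-up only adding some explicit bookkeeping.
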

\begin{proof}
The proof is similar to the proof of Proposition \ref{cube-quasi-iso} and we will continue using the notation from that proof.
A quasi-isomorphism 
$$
\mathbf{H}:(\mathbf{C},\mathbf{F})\to (\mathbf{C}_2,\mathbf{F}_2)
$$
is constructed in the proof of Proposition \ref{cube-quasi-iso} by counting points in the 0-dimensional moduli spaces
$\breve{M}_{v,2u'}$ where $v\in \{0,1\}^n, u' \in \{0,1\}^{n-1}$. 

Formula \eqref{USvu} can be used to define a map
$$
U_{S,vu}: C_v\to C_u
$$
for $v\le u$ in $\{0,1,2\}^n$. 
Let
$$
\widetilde{\mathbf{C}}_2:=\bigoplus_{S\subset\{1,\cdots m\}} \mathbf{C}_{2,S}
$$
where each $\mathbf{C}_{2,S}$ is a copy of $\mathbf{C}_2$.
Then a differential $\widetilde{\mathbf{F}}_2$ on $\widetilde{\mathbf{C}}_2$ can be define using maps $U_{S,2v',2u'}$ where $v'\le u'$
in $\{0,1,2\}^{m-1}$ and imitating the definition of $\widetilde{\mathbf{F}}$. Moreover, we can define a map
$$
\widetilde{\mathbf{H}}: \widetilde{\mathbf{C}}\to \widetilde{\mathbf{C}}
$$
whose component from $\mathbf{C}_{A}$ to $\mathbf{C}_{2,B}$ ($A\subset B$) is $U_{B\setminus A, v, 2u'}$ where
$v\in \{0,1\}^m, u'\in \{0,1\}^{m-1}$  and $v\le 2u'$. When $A=B$, this map is just the quasi-isomorphism $\mathbf{H}$.

Now counting the boundary points in the moduli space $M_{v,2u'}^+(\alpha,\beta)_2\cap V(\delta_i)$ as in the proof
of Proposition \ref{CU-chain} shows that $\widetilde{\mathbf{H}}$ is a chain map. Both 
$\widetilde{\mathbf{C}}$ and $\widetilde{\mathbf{C}}_2$ can be filtered by the cardinal $|S|$ and 
$\widetilde{\mathbf{H}}$ respects the filtrations. 
The $E_1$-pages
of the associated spectral sequences are $2^m$ copies of $H(\mathbf{C})$ and $\mathbf{C}_2$. On the $E_1$-pages,
the  induced map $\widetilde{\mathbf{H}}_\ast$ is $2^m$-copies of ${\mathbf{H}}_\ast$, which is an isomorphism.
Therefore $\widetilde{\mathbf{H}}$ is a quasi-isomorphism. 

Iterating the above argument, we finally obtain a quasi-isomorphism from $\widetilde{\mathbf{C}}$ to 
$\widetilde{\mathbf{C}}_{2,\cdots,2}=\widetilde{C}(L)$.
\end{proof}
The main theorem of this section is
\begin{THE}\label{I-ss}
Suppose $\Gamma$ is a connected oriented spatial web and ${\boldsymbol{\delta}}=\{\delta_i\}$ is a collection of 
points in the interior of edges of $\Gamma$ such that the homology classes of meridians around $\delta_i$ form a basis
of $H_1(S^3\setminus \Gamma;\mathbb{F})$. Then there is a spectral sequence whose $E_2$-page is 
$\mathcal{H}(\Gamma,{\boldsymbol{\delta}};\mathbb{F})$ and which converges to $I^\sharp(\Gamma)$.
\end{THE}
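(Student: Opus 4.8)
The plan is to extract the spectral sequence directly from the double cube $(\widetilde{\mathbf{C}},\widetilde{\mathbf{F}})$ built above, by choosing a suitable filtration. By Proposition \ref{I-cube-iso} together with Proposition \ref{J-I}, we have $H(\widetilde{\mathbf{C}},\widetilde{\mathbf{F}})\cong I^\sharp(\Gamma)$; this is precisely where the hypotheses that $\Gamma$ is connected and that the meridians around the $\delta_i$ form a basis of $H_1(S^3\setminus\Gamma;\mathbb{F})$ enter. So it suffices to filter $\widetilde{\mathbf{C}}$ and to compute the $E_1$- and $E_2$-pages.

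Recall $\widetilde{\mathbf{C}}=\bigoplus_{S\subset\{1,\dots,m\}}\bigoplus_{v\in\{0,1\}^n}C_{S,v}$, where $C_{S,v}$ is a copy of the Floer chain complex $C_v$ for $J^\sharp(D_v)$, and that every component of $\widetilde{\mathbf{F}}$ has the form $U_{B\setminus A,vu}$ with $A\subseteq B$ and $v\le u$. The plan is to filter $\widetilde{\mathbf{C}}$ by $p(S,v):=|S|+|v|_1$. Since $A\subseteq B$ and $v\le u$, every component of $\widetilde{\mathbf{F}}$ weakly increases $p$, so the filtration is respected, and the part of $\widetilde{\mathbf{F}}$ that preserves $p$ is exactly $\sum_v U_{\emptyset,vv}=\sum_v d_v$, the direct sum of the Floer differentials. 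Hence $E_0=\widetilde{\mathbf{C}}$ with differential $\bigoplus_v d_v$, and
\[
E_1=\bigoplus_{S,v}H(C_v,d_v)=\bigoplus_{S\subset\{1,\dots,m\}}\ \bigoplus_{v\in\{0,1\}^n}J^\sharp(D_v).
\]

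The main step is to identify $(E_1,d_1)$. Among the components of $\widetilde{\mathbf{F}}$, only the maps $f_{vu}=U_{\emptyset,vu}$ with $|u-v|_1=1$ and the maps $U_{\{i\},vv}$ raise $p$ by exactly one; every other component raises it by at least two. On $E_1$ the first type induces the skein cobordism map $J^\sharp(\Sigma_{vu}):J^\sharp(D_v)\to J^\sharp(D_u)$, and the second induces the module operator $X_i=J^\sharp(\dot D_v\times I):J^\sharp(D_v)\to J^\sharp(D_v)$ from Section \ref{functor-I}. Applying the natural isomorphism $J^\sharp(D_v)\cong F(D_v;\mathbb{F})$ of Proposition \ref{J=F} (valid since each $D_v$ is a planar web), the naturality clause of that proposition shows that the $v$-directed part of $d_1$ becomes the $\mathfrak{sl}(3)$ Khovanov differential on $F(D;\mathbb{F})$ and the $S$-directed part becomes the edge maps $x_i\wedge(-)\otimes X_{\delta_i}$ of Section \ref{pointed-web}; over $\mathbb{F}$ all signs are irrelevant. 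Thus $(E_1,d_1)$ is precisely the pointed Khovanov cube $C(D,\boldsymbol{\delta};\mathbb{F})=\Lambda_{\boldsymbol{\delta}}\otimes_{\mathbb{F}}F(D;\mathbb{F})$, so
\[
E_2=H(E_1,d_1)=\mathcal{H}(\Gamma,\boldsymbol{\delta};\mathbb{F}).
\]
Since the filtration is finite the spectral sequence converges, with abutment $H(\widetilde{\mathbf{C}},\widetilde{\mathbf{F}})\cong I^\sharp(\Gamma)$, which is the assertion.

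I expect the delicate point to be the identification of $(E_1,d_1)$ with $C(D,\boldsymbol{\delta};\mathbb{F})$: one must check that the induced $E_1$-differential has no unexpected terms (which reduces to the combinatorial bookkeeping that a component $U_{B\setminus A,vu}$ with $A\subsetneq B$ and $v<u$ raises $p$ by at least two), and that under Proposition \ref{J=F} the maps $J^\sharp(\Sigma_{vu})$ and $X_i$ correspond on the nose to the Khovanov differential and to the operators $X_{\delta_i}$ used to define the pointed complex. The naturality of the isomorphism in Proposition \ref{J=F}, together with the identification of $U_{\{i\},vv}$ (equivalently $r_{vv}$) with $X_i$ recorded at the end of Section \ref{functor-I}, supplies exactly this; the remainder is the same computation that produces the $E_2$-page in the proof of Theorem \ref{s-sequence}, carried out simultaneously in the extra cube direction indexed by $S$.
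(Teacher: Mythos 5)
Your proposal is correct and follows essentially the same route as the paper: both use Propositions \ref{J-I} and \ref{I-cube-iso} to identify $H(\widetilde{\mathbf{C}},\widetilde{\mathbf{F}})$ with $I^\sharp(\Gamma)$, filter $\widetilde{\mathbf{C}}$ by $|S|+|v|_1$, and identify the $E_1$-page with the pointed Khovanov complex $(C(D,\boldsymbol{\delta})\otimes_{\mathbb{Z}}\mathbb{F},d_{\boldsymbol{\delta}})$ so that $E_2\cong\mathcal{H}(\Gamma,\boldsymbol{\delta};\mathbb{F})$. Your extra bookkeeping (isolating the $p$-preserving and $p$-raising-by-one components and invoking the naturality in Proposition \ref{J=F} together with the identification of $r_{vv}$ with $X_i$) is exactly what the paper leaves implicit by referring back to the proof of Theorem \ref{s-sequence}.
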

\begin{proof}
By Propositions \ref{J-I} and \ref{I-cube-iso}, we have $H(\widetilde{\mathbf{C}})\cong I^\sharp(\Gamma)$. Recall that
\begin{equation*}
\widetilde{\mathbf{C}}=\bigoplus_{v,S}C_{S,v}
\end{equation*} 
We can filter $\widetilde{\mathbf{C}}$ by $|v|_1+|S|$. The $E_1$-page of the associated spectral spectral sequence is 
exactly the chain complex
\begin{equation*}
(C(D,{{\boldsymbol{\delta}}})\otimes_{\mathbb{Z}}\mathbb{F},d_{{\boldsymbol{\delta}}} )
\end{equation*} 
used to define $\mathcal{H}(\Gamma,{\boldsymbol{\delta}};\mathbb{F})$ in Section \ref{pointed-web}. Therefore the $E_2$-page is 
$\mathcal{H}(\Gamma,{\boldsymbol{\delta}};\mathbb{F})$.
\end{proof}
\begin{rmk}
A $\boldsymbol{\Lambda}_{\boldsymbol{\delta}}$-module structure
is defined on $\mathcal{H}(\Gamma,{\boldsymbol{\delta}};\mathbb{F})$ in Section \ref{pointed-web} (adapted with $\mathbb{F}$-coefficients)
where $\boldsymbol{\Lambda}_{\boldsymbol{\delta}}=\Lambda^\ast[x_1,\cdots,x_m]\otimes_{\mathbb{Z}}\mathbb{F}$. 
We can equip $\widetilde{\mathbf{C}}$ with a $\boldsymbol{\Lambda}_{\boldsymbol{\delta}}$-module structure
by requiring
\begin{equation*}
x_i:\mathbf{C}_S\to \mathbf{C}_{S\cup\{i\}}
\end{equation*}
is the identification map where $i\notin S$ (recall that both $\mathbf{C}_S$ and  $\mathbf{C}_{S\cup\{i\}}$ are copies of $\mathbf{C}$).
A similar module structure can be defined on $\tilde{C}(\Gamma)$ (hence on $I^\sharp(\Gamma)$)
and the quasi-isomorphism in Proposition \ref{I-cube-iso}
respects the module structures. Therefore the spectral sequence in Theorem \ref{I-ss} respects this module structure 
and the $E_2$-page is isomorphic to $\mathcal{H}(\Gamma,{\boldsymbol{\delta}};\mathbb{F})$ as 
$\boldsymbol{\Lambda}_{\boldsymbol{\delta}}$-modules. A priori the 
$\boldsymbol{\Lambda}_{\boldsymbol{\delta}}$-module structure on $I^\sharp(\Gamma)$ 
relies on the specific isomorphism in Proposition \ref{J-I}. Indeed this module structure is intrinsic. There is 
a  ${\mathcal{G}}/\widetilde{\mathcal{G}}\cong H^1(S^3\setminus \Gamma; \mathbb{F})$ 
action on $I^\sharp(\Gamma)$. Let 
$\{y_i\}\subset H^1(S^3\setminus \Gamma; \mathbb{F})$ be the dual basis of 
$\{[m_i]\}\subset H_1(S^3\setminus \Gamma; \mathbb{F})$ where $m_i$ is the meridian around $\delta_i$. 
Using this dual basis, we can describe the the group ring 
\begin{equation*}
\mathbb{F}H^1(S^3\setminus \Gamma; \mathbb{F}) \cong \mathbb{F}[y_1,\cdots,y_m]/(y_i^2-1|i=1,\cdots,m)
\end{equation*}
where we view $H^1(S^3\setminus \Gamma; \mathbb{F})$ as a multiplicative group. We have an isomorphism 
\begin{align*}
\boldsymbol{\Lambda}_{\boldsymbol{\delta}}=\Lambda^\ast[x_1,\cdots,x_m]\otimes_{\mathbb{Z}}\mathbb{F}
&\to \mathbb{F}[y_1,\cdots,y_m]/(y_i^2-1|i=1,\cdots,m)  \\
x_i&\mapsto 1+y_i
\end{align*}
\end{rmk}

\section{The detection of the planar theta graph}
In this section, we will prove Theorem \ref{theta-detection}.

For  a general trivalent graph $\Gamma$ in a 3-manifold $Y$, 
$I^\sharp(Y,\Gamma)$ is defined with $\mathbb{F}$-coefficients. If $L$ is a link,
then $I^\sharp(Y,L)$ can be defined with $\mathbb{Z}$-coefficients. This is exactly the situation in \cite{KM:Kh-unknot}. In this case, 
we use  $I^\sharp(Y,L;R)$ to denote the instanton Floer homology with $R$-coefficients 
($R=\mathbb{Z},\mathbb{Q},\mathbb{C}$). If we do not specify the coefficients, $I(Y,L)$ always means the Floer homology with 
$\mathbb{F}$-coefficients.

From now on, we take
$\Gamma$ to be a spatial theta web.
\begin{PR}\label{H<4}
Suppose $\delta_1,\delta_2$ are two mark points lying on two distinct edge of $\Gamma$.
If
\begin{equation*}
\mathcal{H}(\Gamma;\mathbb{F})\cong  \mathcal{H}(\Theta;\mathbb{F})
\end{equation*}
as $\mathcal{R}_\Theta$-modules, then 
\begin{equation*}
\rank_{\mathbb{F}} \mathcal{H}(\Gamma,\{\delta_1,\delta_2\};\mathbb{F})\le 4
\end{equation*}
\end{PR}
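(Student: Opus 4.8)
The plan is to bound $\mathcal{H}(\Gamma,\{\delta_1,\delta_2\};\mathbb{F})$ by means of the Koszul spectral sequence of Proposition \ref{koszul} and then reduce everything to a finite-dimensional computation over $\mathcal{R}_\Theta$ that is essentially already carried out in the example at the end of Section \ref{pointed-web}. Let $e_1,e_2$ be the two distinct edges of $\Gamma$ carrying $\delta_1,\delta_2$, and let $X_1,X_2\in\mathcal{R}_\Gamma$ be the associated operators; recall that $\mathcal{R}_\Gamma$ depends only on the underlying abstract web, so $\mathcal{R}_\Gamma=\mathcal{R}_\Theta$. Applying Proposition \ref{koszul} with $\mathbb{F}$-coefficients (legitimate, since all of Section \ref{sl3Kh} works verbatim over $\mathbb{F}$), there is a spectral sequence converging to $\mathcal{H}(\Gamma,\{\delta_1,\delta_2\};\mathbb{F})$ whose $E_1$-page is the Koszul complex $K(s,\mathcal{H}(\Gamma;\mathbb{F}))$ with $s=(X_1,X_2)\in\mathcal{R}_\Theta^{\oplus 2}$; in particular $\rank_\mathbb{F}\mathcal{H}(\Gamma,\{\delta_1,\delta_2\};\mathbb{F})\le\rank_\mathbb{F}H(K(s,\mathcal{H}(\Gamma;\mathbb{F})))$. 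An $\mathcal{R}_\Theta$-module isomorphism $\mathcal{H}(\Gamma;\mathbb{F})\cong\mathcal{H}(\Theta;\mathbb{F})$ induces an isomorphism of Koszul complexes $K(s,\mathcal{H}(\Gamma;\mathbb{F}))\cong K(s,\mathcal{H}(\Theta;\mathbb{F}))$ (the Koszul complex depends only on the module and on the fixed tuple $s$), so it suffices to show $\rank_\mathbb{F}H(K(s,\mathcal{H}(\Theta;\mathbb{F})))\le 4$; in fact it will equal $4$.

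Since $\mathcal{H}(\Theta)\cong R_\Theta$ is torsion-free, $\mathcal{H}(\Theta;\mathbb{F})=R_\Theta\otimes_\mathbb{Z}\mathbb{F}=\mathcal{R}_\Theta$, the regular representation. By the planar case of Proposition \ref{koszul} (degeneration at $E_2$), $H(K(s,\mathcal{H}(\Theta)))\cong\mathcal{H}(\Theta,\{\delta_1,\delta_2\})$, which by the example at the end of Section \ref{pointed-web} is free abelian of rank $4$; since this integral homology is free, universal coefficients yields $\rank_\mathbb{F}H(K(s,\mathcal{R}_\Theta))=4$, which completes the proof. I would also include the direct computation for self-containedness: $\mathcal{R}_\Theta=\mathbb{F}[X_1,X_2]/(X_1^2+X_2^2+X_1X_2,\ X_1^2X_2+X_1X_2^2)$ has $\mathbb{F}$-basis $\{1,X_1,X_2,X_1X_2,X_2^2,X_1^2X_2\}$, so $\dim_\mathbb{F}\mathcal{R}_\Theta=6$ (consistent with $\mathcal{H}(\Theta)\cong R_\Theta$), with $X_1^3=X_2^3=X_1^2X_2^2=0$ and $X_1^2X_2=X_1X_2^2\neq 0$. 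In the complex $\mathcal{R}_\Theta\to\mathcal{R}_\Theta^{\oplus 2}\to\mathcal{R}_\Theta$ defining $K(s,\mathcal{R}_\Theta)$, the homology at one end is $\operatorname{Ann}_{\mathcal{R}_\Theta}(X_1)\cap\operatorname{Ann}_{\mathcal{R}_\Theta}(X_2)=\mathbb{F}\langle X_1^2X_2\rangle$ and at the other end $\mathcal{R}_\Theta/(X_1,X_2)\mathcal{R}_\Theta=\mathbb{F}$, each one-dimensional; the vanishing Euler characteristic $6-12+6=0$ then forces the middle homology to be two-dimensional, for a total rank $1+2+1=4$.

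There is no substantive obstacle here: the first step is formal once Proposition \ref{koszul} is available, and the second is a small finite-dimensional computation. The only point needing a little care is the socle computation, i.e. checking that $X_1^2X_2$ spans $\operatorname{Ann}(X_1)\cap\operatorname{Ann}(X_2)$ in $\mathcal{R}_\Theta$; this is immediate from the explicit basis and relations above (equivalently, $(X_1^2+X_2^2+X_1X_2,\ X_1^2X_2+X_1X_2^2)$ is a regular sequence in $\mathbb{F}[X_1,X_2]$, so $\mathcal{R}_\Theta$ is a graded Artinian complete intersection, hence Gorenstein with one-dimensional socle concentrated in degree $3$).
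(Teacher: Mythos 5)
Your argument is correct and is essentially the paper's own proof: Proposition \ref{koszul} (with $\mathbb{F}$-coefficients) gives the rank inequality against the Koszul homology, and the hypothesis reduces that Koszul homology to the computation for $\mathcal{H}(\Theta;\mathbb{F})\cong\mathcal{R}_\Theta$, which is exactly the rank-$4$ example at the end of Section \ref{pointed-web}. Your explicit socle/Euler-characteristic verification just fills in the ``straightforward to check'' step of that example and is consistent with it.
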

\begin{proof}
This follows from Proposition \ref{koszul} and the example at the end of Section \ref{pointed-web} (adapted with $\mathbb{F}$-coefficients).
\end{proof}
We also have a parallel proposition for $J^\sharp$ and $I^\sharp$.
\begin{PR}\label{I<4}
Suppose $\delta_1,\delta_2$ are two mark points lying on two distinct edge of $\Gamma$.
If
\begin{equation*}
J^\sharp(\Gamma)\cong  J^\sharp(\Theta) (\cong \mathcal{H}(\Theta;\mathbb{F}))
\end{equation*}
as $\mathcal{R}_\Theta$-modules, then 
\begin{equation*}
\rank_{\mathbb{F}} I^\sharp(\Gamma)\le 4
\end{equation*}
\end{PR}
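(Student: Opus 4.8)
The plan is to run the same argument that proves Proposition \ref{H<4}, but on the instanton side, using Corollary \ref{koszul-J-I} in place of Proposition \ref{koszul}. First I would check that the two edges $e_1,e_2$ of $\Gamma$ carrying $\delta_1$ and $\delta_2$ are a legitimate choice for Corollary \ref{koszul-J-I}. Since $\Gamma$ is a spatial theta graph, Alexander duality gives $H_1(S^3\setminus\Gamma;\mathbb{F})\cong H^1(\Gamma;\mathbb{F})\cong\mathbb{F}^2$, and the meridians $m_1,m_2,m_3$ of the three edges satisfy a single relation $m_1+m_2+m_3=0$ (read off from a small sphere around either vertex, whose complement in $S^3\setminus\Gamma$ is a thrice-punctured sphere); hence any two of the $m_i$ form a basis. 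So Corollary \ref{koszul-J-I} applies with $m=2$ and $s=(X_1,X_2)\in\mathcal{R}_\Gamma^{\oplus 2}$, yielding a spectral sequence converging to $I^\sharp(\Gamma)$ whose $E_1$-page is $K(s,J^\sharp(\Gamma))$, and in particular $\rank_\mathbb{F} I^\sharp(\Gamma)\le\rank_\mathbb{F} H(K(s,J^\sharp(\Gamma)))$.

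Next I would feed in the hypothesis. Because $\mathcal{R}_\Gamma$ depends only on the underlying abstract web, $\mathcal{R}_\Gamma=\mathcal{R}_\Theta$, and under this identification $X_1,X_2$ are the generators attached to two distinct edges of the abstract theta graph. The assumed isomorphism $J^\sharp(\Gamma)\cong\mathcal{H}(\Theta;\mathbb{F})$ of $\mathcal{R}_\Theta$-modules therefore identifies $K(s,J^\sharp(\Gamma))$ with $K((X_1,X_2),\mathcal{H}(\Theta;\mathbb{F}))$. Now $\mathcal{H}(\Theta;\mathbb{F})\cong\mathcal{R}_\Theta$ is free of rank one, so this Koszul complex is exactly the cube $\mathcal{R}_\Theta\to\mathcal{R}_\Theta^{\oplus 2}\to\mathcal{R}_\Theta$ with the two multiplication maps $X_1,X_2$ — i.e. the $\mathbb{F}$-version of the complex computing $\mathcal{H}(\Theta,\{\delta_1,\delta_2\})$ in the example at the end of Section \ref{pointed-web}. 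That example shows its homology is a four-dimensional $\mathbb{F}$-vector space, so $\rank_\mathbb{F} H(K(s,J^\sharp(\Gamma)))=4$, and combining with the inequality of the previous paragraph gives $\rank_\mathbb{F} I^\sharp(\Gamma)\le 4$.

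The computation is routine once all identifications are in place, so the step that most deserves care is the middle one: making sure the Koszul element $s$ built from the edges of $\Gamma$ corresponds, under the $\mathcal{R}_\Theta$-module isomorphism, to the pair $(X_1,X_2)$ of the example. This is where the hypothesis that $\Gamma$ is connected and the freedom to put $\delta_1,\delta_2$ on \emph{distinct} edges enter; since the abstract theta graph is symmetric under any permutation of its three edges, any two distinct edge classes generate $\mathcal{R}_\Theta$ and give isomorphic Koszul complexes, so no relabeling issue actually arises — but it is worth one explicit sentence. Everything else is a direct invocation of Corollary \ref{koszul-J-I}.
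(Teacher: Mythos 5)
Your proposal is correct and takes essentially the same route as the paper: the paper's own proof is precisely the invocation of Corollary \ref{koszul-J-I} together with the rank-four Koszul computation in the example at the end of Section \ref{pointed-web} adapted to $\mathbb{F}$-coefficients. Your additional details (the meridians of two distinct edges of a spatial theta graph form a basis of $H_1(S^3\setminus\Gamma;\mathbb{F})$, and the $\mathcal{R}_\Theta$-module hypothesis identifies $K(s,J^\sharp(\Gamma))$ with the cube from the example, whose homology is $4$-dimensional over $\mathbb{F}$) are exactly the steps the paper leaves implicit.
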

\begin{proof}
This follows from Proposition \ref{koszul-J-I} and the example at the end of Section \ref{pointed-web} (adapted with $\mathbb{F}$-coefficients).
\end{proof}

Recall that 
\begin{equation*}
I^\sharp(\Gamma):=I(\Gamma \cup H)
\end{equation*}
where $H$ is a Hopf link together with an arc joining the two components representing the $w_2$.
Let $B_1$ and $B_2$ be two small ball neighborhood of the two vertices of $\Gamma$ so that the boundary sphere
$S_1$ (or $S_2$) meets $\Gamma$ transversally at three points. Pick a diffeomorphism 
$h:(S_1,\Gamma\cap S_1)\to (S_2,S_2\cap \Gamma)$ such that 
that $\Gamma\cap S_1$ and $\Gamma\cap S_2$ are identified by the three arcs $\Gamma\cap (S^3\setminus B_1\cup B_2)$.
Move $H$ into $B_1$, cut $S^3$ along $S_1$ and $S_2$ and re-glue using $h$, we obtain 
\begin{equation*}
(S^1\times S^2, L) \cup (S^3,\Theta\cup H)
\end{equation*}
where $L$ is a link with three components in $S^1\times S^2$.
We use $S$ to denote the sphere in $S^1\times S^2$ obtained by identifying $S_1$ and $S_2$. Since $L\cap S$ consists of 
three points ,the non-integral condition (\cite{KM:Kh-unknot}*{Definition 3.1}) is satisfied. Hence the instanton Floer homology
$I(S^1\times S^2,L;\mathbb{Z})$ is well-defined (without adding $H$).
\begin{PR}\label{L=2gamma}
Let $\Gamma$ and $L$ be given as above, then we have
\begin{equation*}
I(S^1\times S^2,L)\otimes I^\sharp(\Theta) \cong I^\sharp(\Gamma)\oplus I^\sharp(\Gamma)
\end{equation*}
\end{PR}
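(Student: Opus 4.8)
The plan is to obtain the isomorphism from Floer's excision theorem, in the bifold (orbifold $SO(3)$) formulation of Kronheimer--Mrowka, combined with the K\"unneth formula for a disjoint union; the multiplicity two on the right-hand side will drop out of the bundle bookkeeping in the excision isomorphism.

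First I would fix the geometry precisely. Choose disjoint ball neighbourhoods $B_1,B_2$ of the two vertices of $\Gamma$ so that each $S_i=\partial B_i$ meets $\Gamma$ transversally in three points, and isotope $H$ into $B_1$ away from $S_1$ and from all edge--arcs. Then $S^3=B_1\cup_{S_1}X\cup_{S_2}B_2$, where $X=S^3\setminus\operatorname{int}(B_1\cup B_2)\cong S^2\times[0,1]$ contains the three edge--arcs of $\Gamma$ as a $3$--strand tangle whose two ends are the point sets $\Gamma\cap S_1$ and $\Gamma\cap S_2$, identified by the strands. The excision move for the pair $(S_1,S_2)$ with the identification $h$ replaces this decomposition by $(B_1\cup_h B_2)\ \sqcup\ \bigl(X\text{ with its two boundary spheres identified via the map induced by }h\bigr)$. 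The first piece is $S^3$ with two trivial tripods glued along three points, that is, the planar theta graph, and it still contains $H$: it is $(S^3,\Theta\cup H)$. The second piece is $S^1\times S^2$ with the three strands of the tangle closed up, i.e.\ $(S^1\times S^2,L)$. This is exactly the reglued pair appearing in the statement.

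Next I would check the hypotheses and invoke excision. Each cutting sphere $S_i$ meets the graph in the odd number $3$ of points, so it satisfies the non-integral condition of \cite{KM:Kh-unknot}*{Definition 3.1}; the same holds for the sphere in $(S^1\times S^2,L)$ produced by the regluing (this is already recorded in the excerpt as the reason $I(S^1\times S^2,L)$ is well defined), while in $(S^3,\Theta\cup H)$ the non-triviality of $w_2$ along $H$ plays its usual role of excluding reducibles. Hence the excision theorem of Kronheimer--Mrowka (\cite{KM-jsharp,KM:Kh-unknot}) — applied with a family of bifold metrics stretching across $S_1\sqcup S_2$, of the kind built in Section \ref{functor-I} and in \cite{KM:Kh-unknot} — identifies $I(S^3,\Gamma\cup H)$ with the instanton homology of the reglued $3$--manifold-with-graph. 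Since $\mathbb{F}$ is a field the K\"unneth formula has no correction term and computes $I$ of the disjoint union as $I(S^1\times S^2,L)\otimes I(S^3,\Theta\cup H)=I(S^1\times S^2,L)\otimes I^\sharp(\Theta)$. Together with $I^\sharp(\Gamma)=I(S^3,\Gamma\cup H)$, this yields the stated isomorphism, modulo the multiplicity on the right.

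The step I expect to be the real obstacle is precisely that multiplicity: the careful accounting of which bifold bundles (equivalently, which admissible reference connections) over the cutting region are summed in the excision isomorphism. The three--punctured spheres carry Klein--four local stabiliser data at their punctures, and the neck--stretching bookkeeping — the same kind that occurs in the $\mathfrak{sl}(2)$ argument of \cite{KM:Kh-unknot} — is what produces the extra factor of $2$. I would pin down the conventions by first running the model case $\Gamma=\Theta$, where the tangle in $X$ is trivial and $I(S^1\times S^2,L)$ is readily computed to be $2$--dimensional over $\mathbb{F}$, so that both sides have dimension $2\cdot\dim_{\mathbb F}I^\sharp(\Theta)$, and then carrying the general argument through. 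The remaining points — choosing holonomy perturbations so that all relevant moduli spaces are regular, and the absence of orientation issues in characteristic $2$ — are routine given the machinery already assembled in Sections \ref{functor-I}--\ref{J-ss}.
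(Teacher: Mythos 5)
Your geometric setup is the right one, and it is the same decomposition the paper relies on: cutting along the two vertex spheres $S_1,S_2$ and regluing by $h$ produces $(S^3,\Theta\cup H)\sqcup(S^1\times S^2,L)$, each sphere meets the web in three points so the non-integral condition holds, and K\"unneth over the field $\mathbb{F}$ is harmless. (The paper gives no more detail than this; its proof is a citation of the proof of \cite{KM-jsharp}*{Proposition 7.5}, which is exactly such an excision argument.) The problem is the step you actually run: ``the excision theorem identifies $I(S^3,\Gamma\cup H)$ with the instanton homology of the reglued pair,'' followed by K\"unneth, proves $I(S^1\times S^2,L)\otimes I^\sharp(\Theta)\cong I^\sharp(\Gamma)$ \emph{without} the multiplicity, and that statement is false. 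Already for $\Gamma=\Theta$, where $L$ is the trivial closure $L_0=S^1\times\{p_1,p_2,p_3\}$, it would force $\dim_{\mathbb{F}}I(S^1\times S^2,L_0)=1$ (using $I^\sharp(\Theta)\neq 0$), whereas by \eqref{THI-dim} and universal coefficients $\dim_{\mathbb{F}}I(S^1\times S^2,L_0)\ge\dim_{\mathbb{C}}I(S^1\times S^2,L_0;\mathbb{C})=2$ --- your own model computation. So your main line of argument, taken literally, proves a wrong statement, and the deferred ``bundle bookkeeping'' is not a side issue but the entire content of the proposition.

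Concretely, the missing idea is that Floer-type excision along spheres meeting the web in three points does not hold in the naive isomorphism form for the determinant-one ($I$-type) theory: the condition that usually makes excision an isomorphism --- one-dimensionality of the Floer homology of the closed-up neck --- fails here, since the closed-up neck is precisely $(S^1\times S^2,S^1\times\{p_1,p_2,p_3\})$, whose homology has rank two (two flat $SO(3)$ connections, distinguished by the holonomy around the $S^1$ direction in the Klein-four stabilizer). The excised pieces are coupled to the original through this rank-two space, in the same way that the factor $I^\sharp(U)\cong\mathbb{F}^2$ enters the connected-sum formula for links, and this is what produces $I^\sharp(\Gamma)\oplus I^\sharp(\Gamma)$ on the right-hand side. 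Your plan to ``pin down the conventions by running the model case'' cannot detect this mechanism, because for $\Gamma=\Theta$ the statement is consistent no matter how the factor arises. To close the gap you must either prove the correct form of excision for three-punctured spheres (keeping track of this rank-two coupling) or do what the paper does and quote the proof of \cite{KM-jsharp}*{Proposition 7.5}, where that analysis is carried out.
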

\begin{proof}
This is exactly the content of the proof of \cite{KM-jsharp}*{Proposition 7.5}, which follows from  an
excision theorem.
\end{proof}

\begin{PR}\label{I-theta}
We have
\begin{equation*}
 I^\sharp(\Theta) \cong \mathbb{F}^4
\end{equation*}
\end{PR}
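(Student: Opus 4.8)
The plan is to bracket $\dim_{\mathbb F} I^\sharp(\Theta)$ between $4$ and $4$, obtaining the upper bound from the spectral sequence of Theorem \ref{I-ss} and the lower bound from its degeneration. For the upper bound, apply Theorem \ref{I-ss} to $\Gamma=\Theta$ with $\boldsymbol\delta=\{\delta_1,\delta_2\}$ placed on two distinct edges: the planar theta graph is connected and $H_1(S^3\setminus\Theta;\mathbb F)\cong\mathbb F^2$ is generated by the meridians of any two of the three edges, so the hypotheses are met and we get a spectral sequence with $E_2$-page $\mathcal H(\Theta,\boldsymbol\delta;\mathbb F)$ converging to $I^\sharp(\Theta)$. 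By Proposition \ref{J=F}, $\mathcal H(\Theta;\mathbb F)=F(\Theta;\mathbb F)\cong\mathcal R_\Theta$ as $\mathcal R_\Theta$-modules, and running the example at the end of Section \ref{pointed-web} with $\mathbb F$-coefficients identifies $\mathcal H(\Theta,\boldsymbol\delta;\mathbb F)$ with the homology of the cube of multiplication maps $X_1,X_2$ on $\mathcal R_\Theta$; since $\mathcal R_\Theta$ is a $6$-dimensional Artinian Gorenstein $\mathbb F$-algebra with one-dimensional socle, this cube has homology $\operatorname{ann}_{\mathcal R_\Theta}(X_1,X_2)\cong\mathbb F$ in homological degree $0$, a two-dimensional middle Koszul homology, and $\mathcal R_\Theta/(X_1,X_2)\cong\mathbb F$ in homological degree $2$, of total dimension $4$. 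Hence $\dim_{\mathbb F} I^\sharp(\Theta)\le \dim_{\mathbb F}\mathcal H(\Theta,\boldsymbol\delta;\mathbb F)=4$.

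For the reverse inequality one must show the spectral sequence degenerates at $E_2$, i.e.\ that every higher differential $d_r$ ($r\ge 2$) vanishes. The plan is a grading argument: $I^\sharp(\Theta)$ and all pages of the spectral sequence carry the instanton $\mathbb Z/4$-grading (equivalently the mod-$4$ reduction of the combined homological and quantum grading on $\mathcal H(\Theta,\boldsymbol\delta;\mathbb F)$), with respect to which each $d_r$ has a fixed odd degree. By homological degree, the only candidate nonzero higher differential is a $d_2$ running from the degree-$0$ class $\operatorname{ann}_{\mathcal R_\Theta}(X_1,X_2)$ to the degree-$2$ class $\mathcal R_\Theta/(X_1,X_2)$; tracking the quantum grading through the two $x_i$-shifts that produce the degree-$2$ vertex of the cube shows that these two classes lie in the same $\mathbb Z/4$-grading, so $d_2$ must vanish for degree reasons, and there is no room for a $d_3$ or higher. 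Therefore $E_\infty=E_2$ and $\dim_{\mathbb F} I^\sharp(\Theta)=4$.

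I expect the lower bound to be the main obstacle: the spectral sequence yields only the inequality $\le 4$ for free, and ruling out cancellation requires the grading bookkeeping above (one must be careful that the relevant combined grading really does reduce to the instanton $\mathbb Z/4$-grading and is respected by the filtration). An alternative and perhaps more robust route to the lower bound is to observe that the local argument establishing the digon relation Proposition \ref{jsharp-p}(5) for $J^\sharp$ applies verbatim to $I^\sharp$, giving $I^\sharp(\Theta)\cong I^\sharp(U)^{\oplus 2}$ where $U$ is the unknot obtained by collapsing one of the three bigons of $\Theta$, and then invoking the computation $I^\sharp(U)\cong\mathbb F^2$ from \cite{KM-jsharp}; either way one arrives at $I^\sharp(\Theta)\cong\mathbb F^4$.
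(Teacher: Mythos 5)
Your upper bound is correct, and it is the same computation the paper itself makes in Corollary \ref{koszul-J-I} and Proposition \ref{I<4}: the spectral sequence of Theorem \ref{I-ss} applied to $(\Theta,\{\delta_1,\delta_2\})$ together with the Koszul-complex calculation for $\mathcal{R}_\Theta$ gives $\dim_{\mathbb F}I^\sharp(\Theta)\le \dim_{\mathbb F}\mathcal H(\Theta,\boldsymbol\delta;\mathbb F)=4$. The gap is in the lower bound, which is exactly the hard half. Your degeneration argument needs three things that are established nowhere in the paper or in \cite{KM-jsharp}: an absolute (or at least relative) $\mathbb Z/4$ grading on the instanton complex $\widetilde{C}(\Theta)$ of Proposition \ref{J-I} --- note that $\Theta\cup H$ is a web with vertices, so the bifold has Klein-four points and the usual mod-4 index bookkeeping for links does not apply as stated; a proof that every differential of the spectral sequence, in particular the zig-zag defining $d_2$ out of the maps $U_S$, has a fixed odd degree for that grading; and a comparison identifying the instanton grading on the $E_2$-page with the mod-4 reduction of the combined homological/quantum grading under the isomorphism $J^\sharp(\Theta)\cong\mathcal H(\Theta;\mathbb F)$. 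The only grading statement available is the remark that the Khovanov side retains a $\mathbb Z/4$ quantum grading; nothing on the instanton side is matched to it, so ``$d_2$ vanishes for degree reasons'' is an unproved assertion rather than bookkeeping.

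Your fallback --- a digon relation for $I^\sharp$ obtained ``verbatim'' from the proof of Proposition \ref{jsharp-p}(5) --- does not work as stated either. That proof rests on the closed-foam calculus of $J^\sharp$ (the two-dotted sphere and theta-foam evaluations, the three-term neck-cutting relation), and the local algebra of $I^\sharp$ is genuinely different: for instance a split unknot multiplies $J^\sharp$ by $3$ but $I^\sharp$ by $2$ (compare $J^\sharp(U)\cong\mathbb F^3$ with $I^\sharp(U)\cong\mathbb F^2$), so the evaluations feeding the digon argument do not carry over. A bigon decomposition for $I^\sharp$ would need its own proof, e.g.\ by excision, which is essentially what the paper supplies in this special case: the paper's proof of Proposition \ref{I-theta} uses the excision theorem of \cite{KM-jsharp} to show $\rank_{\mathbb F}I^\sharp(\Theta)=\rank_{\mathbb F}I(S^3,H_1\cup H_2)$ for two split Hopf links, analyzes the Morse--Bott critical set $SO(3)$ to get a four-generator complex with a single undetermined differential $d\colon\alpha_0\to\alpha_3$, and kills $d$ by Fukaya's connected-sum theorem over $\mathbb Q$ (the point operator acts as a scalar on the one-dimensional $I(S^3,H;\mathbb Q)$, forcing $I(S^3,H_1\cup H_2;\mathbb Q)\cong\mathbb Q^2$, which is incompatible with $d\neq0$). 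Until you either prove the grading statements or establish an $I^\sharp$ digon/excision decomposition, your argument only yields the inequality $\dim_{\mathbb F}I^\sharp(\Theta)\le 4$.
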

\begin{proof}
We first use an argument which was used in the proof of \cite{KM-deform}*{Proposition 2.1} to replace $\Theta$ by a Hopf link.
Recall that
\begin{equation*}
I^\sharp(\Theta):=I(S^3,\Theta\cup H)
\end{equation*}
The Excision Theorems in \cite{KM-jsharp}*{Section 4} can be used to show that
\begin{equation*}
 I(S^3,\Theta\cup H_1)\otimes I(S^3,\Theta\cup H_2)\cong  I(S^3,\Theta\cup H_1\cup H_2)
 \cong I(S^3,\Theta\cup H_2)\otimes I(S^3,H_1\cup H_2)
\end{equation*}
where $H_1$ and $H_2$ are two disjoint copies of $H$. Since 
$$ 
I(S^3,\Theta\cup H_2)=I^\sharp(\Theta)\neq 0
$$ 
by \cite{KM-jsharp}*{Section 7.1},
 we have 
\begin{equation}\label{theta=H}
 \rank_\mathbb{F} I^\sharp(\Theta)=\rank_\mathbb{F} I(S^3,\Theta\cup H_1)=
 \rank_\mathbb{F} I(S^3,H_1\cup H_2)
\end{equation}
The Chern-Simons functional on the pair $(S^3,H_1\cup H_2)$ has a Morse-Bott critical set $SO(3)$. After a 
suitable generic perturbation, the restriction $F|_{SO(3)}$ of
the perturbed Chern-Simons functional 
becomes a standard Morse function on $SO(3)$ with critical points $\alpha_i$ ($0\le i \le 3$)
and the critical points of $F$ are exactly $\{\alpha_i\}$. The degree of $\alpha_i$ is just $i$ and the moduli space
of trajectories $M(\alpha_i,\alpha_{i-1})_1$ ($1\le i \le 3$) approximates the Morse trajectories on $SO(3)$. Therefore
the differential on $\alpha_i$ ($1\le i \le 3$)  is the same as the differential in the Morse homology of $SO(3)$.
Since we only have a relative $\mathbb{Z}/4$-grading on those critical points, the differential on
$\alpha_0$ may be non-zero. Now we have a cyclic chain complex 
\begin{equation*}
\xymatrix{
  \mathbb{Z}\{\alpha_3\}  \ar[r]^{0} & \mathbb{Z}\{\alpha_2\} \ar[d]^{2} \\
  \mathbb{Z}\{\alpha_0\} \ar[u]_{d} & \mathbb{Z}\{\alpha_1\} \ar[l]^{0} 
}
\end{equation*}
If $d\neq 0$, then we have $I(S^3,H_1\cup H_2;\mathbb{Q})=0$. 

Let $Y_1$ and $Y_2$ be two homology 3-spheres.
Fukaya's connected sum theorem \cite{Fukaya-sum} relates the instanton Floer homologies 
$I(Y_1;\mathbb{Z}),I(Y_2;\mathbb{Z})$ to $I(Y_1\# Y_2;\mathbb{Z})$ by a 
spectral sequence. An elaboration of Fukaya's theorem with $\mathbb{Q}$-coefficients
can be found in \cite{Don:YM-Floer}*{Section 7.4}. The same problem is 
also studied in \cite{Li-sum}. We want to apply Fukaya's theorem to the connected sum 
\begin{equation*}
(S^3,H_1\cup H_2)=(S^3,H_1)\#(S^3,H_2)
\end{equation*} 
Even though this is not
a connected sum of homology 3-spheres with empty links in Fukaya's setting, the same proof still works. Our situation is
even easier because we are working in an admissible case so that reducible critical points of the Chern-Simons functional 
do not enter into the discussion.  

From \cite{Don:YM-Floer}*{Section 7.4}, we have a spectral sequence which converges to $I(S^1\times S^2, H_1\cup H_2)$ and
whose last possibly non-degenerate page is 
\begin{align*}
\xymatrix{
  I(S^3, H_1;\mathbb{Q})\otimes I(S^3,  H_2;\mathbb{Q}) \ar[r]^{f} & 
  I(S^3, H_1;\mathbb{Q})\otimes I(S^3,  H_2;\mathbb{Q})   }
\end{align*}
where the differential $f$ is $2\mu(x_1)\otimes 1- 1\otimes 2\mu (x_2)$ ($x_1\in S^3\setminus H_1$ and $x_2\in S^3\setminus H_2$). 
We use
$1$ for the identify map and
$\mu(x)$ for the point operator of degree 4. Our convention for $\mu$ is from \cite{DK}. 

The instanton Floer homology $I(S^3,  H;\mathbb{Q})$ is 1-dimensional. Therefore
the point operator $\mu(x)$ on  $I(S^3,  H;\mathbb{Q})$ is just a scalar product. 
This implies the differential $f$ is zero. 
We obtain $I(S^3,H_1\cup H_2;\mathbb{Q})=\mathbb{Q}^{2}$, which is a contradiction.
Therefore $d$ must be $0$. We have 
\begin{equation*}
I(S^3,H_1\cup H_2;\mathbb{Z})=\mathbb{Z}^2\oplus \mathbb{Z}/2,~~I(S^3,H_1\cup H_2)= \mathbb{F}^{4}
\end{equation*}
By \eqref{theta=H}, the proof is complete.
\end{proof}

When we define $L$, we first remove two balls $B_1$ and $B_2$ from $S^3$. We can define
a tangle $T:=\Gamma\cap (S^3\setminus B_1\cup B_2)$ in $(S^3\setminus B_1\cup B_2)\cong I\times S^2$. We may assume
$T$ is disjoint with $I\times \{\infty\}\subset I\times S^2$ and view $T$ as a tangle in $I\times D^2$ by removing a tubular 
neighborhood of $I\times \{\infty\}$ in $I\times S^2$. Let $S\subset S^1\times S^2$ be a $S^2$-slice as before.
In \cite{Street}, an operator $\muu (S)$ of degree $2$ on $I(S^1\times S^2,L;\mathbb{C})$ is defined and the eigenvalues of
$\muu(S)$ are shown to be $\pm 1$. 
As a degree $2$ operator on a $\mathbb{Z}/4$-graded vector space 
$I(S^1\times S^2,L;\mathbb{C})$, the generalized eigenspaces with eigenvalue $1$ or $-1$ must have the same dimension. 
The generalized eigenspace with eigenvalue $1$ is defined to be the odd tangle Floer homology $\THI^{\text{odd}}(T)$
in \cite{Street}*{Definition 3.3.2}. From the definition we have
\begin{equation}\label{THI-dim}
\rank_{\mathbb{C}} I(S^1\times S^2,L;\mathbb{C}) = 2\rank_{\mathbb{C}} \THI^{\text{odd}}(T)
\end{equation}
The odd tangle Floer homology $\THI^{\text{odd}}(T)$ satisfies the following.
\begin{THE}\cite{AHI}*{Theorem 3.10}
The  Floer homology  $\THI^{\text{odd}}(T)$ is 1-dimensional
 if and only if $T$ is isotopic to a braid. 
\end{THE}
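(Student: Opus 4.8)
The plan is to reduce the statement to a product-detection theorem for the \emph{sutured instanton Floer homology} $SHI$ of the tangle exterior. Write $X_T$ for the exterior of $T$ in $I\times D^2$, with its natural balanced sutured structure: the two components of $R_\pm(X_T)$ are the planar surfaces $(\{0\}\times D^2)\setminus N(T)$ and $(\{1\}\times D^2)\setminus N(T)$, and the suture runs along $\partial I\times\partial D^2$ together with the meridional annuli around the strands of $T$. The first ingredient is the purely topological equivalence ``$T$ is isotopic to a braid $\iff$ $X_T$ is a product sutured manifold $P\times I$,'' where $P=D^2\setminus(\text{endpoints})$: a braid projects fibrewise homeomorphically onto $I$, so its exterior is a $P$-bundle over $I$ and hence a product, and conversely a product structure on $X_T$ can be isotoped to agree with the original map $I\times D^2\to I$ by a Haken-style innermost-disk argument, recovering a level-preserving description of $T$. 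The second ingredient, which is where the instanton machinery enters, is that the operator $\muu(S)$ of \cite{Street}, via Kronheimer--Mrowka's excision theorem, identifies $I(S^1\times S^2,L;\mathbb{C})$ with two copies of a single closure of $X_T$, so that the generalized $(+1)$-eigenspace $\THI^{\text{odd}}(T)$ is canonically $SHI(X_T;\mathbb{C})$; this is also what makes the factor $2$ in \eqref{THI-dim} automatic.

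Granting these two ingredients, the ``if'' direction is a computation: if $T$ is a braid then $X_T\cong P\times I$, and the sutured instanton homology of a product sutured manifold is one-dimensional over $\mathbb{C}$ (the normalization property of $SHI$, which in Kronheimer--Mrowka's construction follows from the excision calculation for the three-torus). Hence $\THI^{\text{odd}}(T)\cong SHI(X_T;\mathbb{C})\cong\mathbb{C}$, as claimed.

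For the converse, suppose $\dim_{\mathbb{C}}\THI^{\text{odd}}(T)=1$, equivalently $\dim_{\mathbb{C}}SHI(X_T;\mathbb{C})=1$. Since $SHI$ vanishes for a non-taut balanced sutured manifold, $X_T$ must be taut; in particular $T$ has no closed component (a closed component would contribute an extra tensor factor, raising the dimension) and $R_\pm(X_T)$ is incompressible, so $T$ has no boundary-parallel cup--cap pair. If $X_T$ were taut but not a product, then — following Juh\'{a}sz's scheme in the instanton setting — one finds an incompressible, boundary-incompressible, non-product decomposing surface splitting $X_T$ into taut balanced sutured manifolds each with nontrivial $SHI$; feeding this into the surface-decomposition theorem for sutured instanton homology (Kronheimer--Mrowka, together with the refinement of Ghosh--Li) forces $\dim_{\mathbb{C}}SHI(X_T;\mathbb{C})\ge 2$, a contradiction. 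Hence $X_T$ is a product sutured manifold and $T$ is isotopic to a braid. The main obstacle is exactly this converse direction: it requires the instanton analogue of Juh\'{a}sz's theorem that sutured Floer homology detects product sutured manifolds — in particular, choosing a decomposing surface for which the excision/closure identification remains valid and excluding all the degenerate sutured configurations — and then the topological step promoting ``$X_T$ is a product sutured manifold'' to ``$T$ is a braid,'' where one must align the abstract product fibration of the exterior with the preferred $I$-direction of the tangle.
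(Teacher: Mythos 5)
This statement is not proved in the paper at all: it is imported from \cite{AHI}*{Theorem 3.10}, so the only available comparison is with the argument of that reference, and your outline is essentially that argument. The skeleton is the standard one: identify $\THI^{\text{odd}}(T)$ with the sutured instanton homology $SHI(X_T;\mathbb{C})$ of the tangle exterior (the pair $(S^1\times S^2,L)$ is an admissible closure of $X_T$ because $L$ meets the sphere $S$ in an odd number of points, and the two generalized eigenspaces of $\muu(S)$ are the two copies responsible for the factor $2$ in \eqref{THI-dim}); establish the topological equivalence ``$T$ is a braid if and only if $X_T$ is a product sutured manifold''; then use the normalization $SHI(\text{product})\cong\mathbb{C}$ in one direction and product detection for $SHI$ in the other. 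So the route is the right one, and the ``if'' direction is complete as you state it.

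Where your sketch is looser than what a proof requires is the converse. You argue it by ``Juh\'asz's scheme in the instanton setting'': find a non-product decomposing surface and invoke a surface-decomposition/direct-summand theorem for $SHI$. The theorem one actually quotes is Kronheimer--Mrowka's product-detection result (from \emph{Knots, sutures and excision}): a taut balanced sutured manifold which is a \emph{homology product} and has rank-one $SHI$ is a product sutured manifold. To apply it you must verify the homology-product hypothesis for $X_T$; this is automatic when every strand of $T$ runs from the bottom disk to the top disk (the case needed for the theta graph, where the meridians of the strands generate $H_1$ freely), but for a general tangle you must first use rank one to exclude closed components and cup--cap strands, which your parentheses assert rather than prove. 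If instead you insist on the decomposition route, the general direct-summand behaviour of $SHI$ under surface decompositions is not something Kronheimer--Mrowka provide in the generality you invoke; that refinement is later work (Ghosh--Li), and you would still need to exhibit the decomposing surface and check both pieces are taut. Finally, the identification $\THI^{\text{odd}}(T)\cong SHI(X_T;\mathbb{C})$ is itself a theorem resting on the excision argument showing independence of the closure (this is the content of \cite{Street} and \cite{KM-jsharp}); it is fine to cite, but it is not ``canonical'' for free. None of these points derails the outline, but they are exactly the steps the cited reference has to supply.
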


\begin{PR}\label{I-detection-braid}
Let $\Gamma$ and $T$ be given as above. If 
\begin{equation*}
\rank_{\mathbb{F}}I^\sharp(\Gamma)\le 4
\end{equation*}
then we have
\begin{equation*}
\THI^{\emph{odd}}(T)\cong \mathbb{C}
\end{equation*}
Therefore $T$ must be a braid. 
\end{PR}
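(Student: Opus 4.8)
The plan is to convert the rank bound $\rank_{\mathbb{F}}I^\sharp(\Gamma)\le 4$ into an upper bound on $\rank_{\mathbb{C}}\THI^{\text{odd}}(T)$ and then apply the braid-detection theorem \cite{AHI}*{Theorem 3.10}. First I would feed the hypothesis into the excision isomorphism of Proposition \ref{L=2gamma}, using the computation $I^\sharp(\Theta)\cong\mathbb{F}^4$ from Proposition \ref{I-theta}. Taking $\mathbb{F}$-dimensions in
\[
I(S^1\times S^2,L)\otimes I^\sharp(\Theta)\cong I^\sharp(\Gamma)\oplus I^\sharp(\Gamma)
\]
gives $4\,\rank_{\mathbb{F}}I(S^1\times S^2,L)=2\,\rank_{\mathbb{F}}I^\sharp(\Gamma)\le 8$, hence $\rank_{\mathbb{F}}I(S^1\times S^2,L)\le 2$.

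Next I would pass from $\mathbb{F}$- to $\mathbb{C}$-coefficients. As recalled above, $L\cap S$ consists of three points, so the non-integral condition holds and $I(S^1\times S^2,L;\mathbb{Z})$ is the homology of a finitely generated free $\mathbb{Z}$-chain complex; the universal coefficient theorem then gives $\rank_{\mathbb{C}}I(S^1\times S^2,L;\mathbb{C})\le\rank_{\mathbb{F}}I(S^1\times S^2,L;\mathbb{F})\le 2$. Plugging this into \eqref{THI-dim}, which reads $\rank_{\mathbb{C}}I(S^1\times S^2,L;\mathbb{C})=2\,\rank_{\mathbb{C}}\THI^{\text{odd}}(T)$, I obtain $\rank_{\mathbb{C}}\THI^{\text{odd}}(T)\le 1$. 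Since $\THI^{\text{odd}}(T)$ is nonzero for any tangle $T$ (a standard non-vanishing property of sutured instanton homology), it follows that $\rank_{\mathbb{C}}\THI^{\text{odd}}(T)=1$, i.e.\ $\THI^{\text{odd}}(T)\cong\mathbb{C}$. Then \cite{AHI}*{Theorem 3.10} shows that $T$ is isotopic to a braid, as asserted.

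I expect the only subtle point to be the coefficient change: Propositions \ref{L=2gamma} and \ref{I-theta} are stated with $\mathbb{F}$-coefficients, whereas $\THI^{\text{odd}}(T)$ and \eqref{THI-dim} live over $\mathbb{C}$, so the argument hinges on the integrality of $I(S^1\times S^2,L)$ — already arranged in the excerpt — together with the elementary fact that reduction modulo $2$ cannot lower the total rank. The one ingredient not established in the excerpt is the non-vanishing $\THI^{\text{odd}}(T)\neq 0$, which I would quote from the literature on tangle and sutured instanton Floer homology; guaranteeing this is what keeps the bound $\rank_{\mathbb{C}}\THI^{\text{odd}}(T)\le 1$ from being vacuous, and is the main thing to get right.
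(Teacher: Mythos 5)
Your argument is essentially the paper's proof: the same chain of Proposition \ref{L=2gamma} together with $I^\sharp(\Theta)\cong\mathbb{F}^4$ from Proposition \ref{I-theta} to get $\rank_{\mathbb{F}}I(S^1\times S^2,L)\le 2$, then the universal coefficient theorem and \eqref{THI-dim} to get $\rank_{\mathbb{C}}\THI^{\text{odd}}(T)\le 1$, and finally \cite{AHI}*{Theorem 3.10} to conclude that $T$ is a braid. The only point of divergence is how you rule out $\THI^{\text{odd}}(T)=0$: you appeal to an unspecified ``standard non-vanishing property of sutured instanton homology,'' which you rightly flag as the one ingredient not established. The paper closes this gap differently and more sharply, by citing \cite{AHI}*{Proposition 4.10}, which says that $\THI^{\text{odd}}(T)$ is \emph{odd}-dimensional; combined with the bound $\rank_{\mathbb{C}}\THI^{\text{odd}}(T)\le 1$ this forces the dimension to be exactly $1$ and in particular gives the non-vanishing you want for free. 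So your plan is sound, but to make it airtight you should replace the vague non-vanishing citation with this odd-dimensionality statement (or with an equally precise reference); as written, that single step is the only part of your proof resting on an unnamed result.
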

\begin{proof}
By Proposition \ref{L=2gamma} and Proposition \ref{I-theta}, we have
$$
\rank_{\mathbb{F}} I^\sharp (S^1\times S^2, L) \le 2
$$
Equality \eqref{THI-dim} and the universal coefficients theorem imply 
$$
\rank_{\mathbb{C}} \THI^{\text{odd}}(T) \le 1
$$
Since $\THI^{\text{odd}}(T)$ is odd-dimensional \cite{AHI}*{Proposition 4.10}, its dimension must be 1.
\end{proof}

The diagram of a spatial trivalent graph can be changed by type V moves in Figure \ref{RV}, which changes $T$ by a generator of
the braid group. After finitely many type V moves, $T$ becomes the trivial braid.
 So we have
\begin{COR}\label{I-detection-theta}
Let $\Gamma$ be given as above. If 
\begin{equation*}
\rank_{\mathbb{F}} I^\sharp(\Gamma)\le 4
\end{equation*}
then  $\Gamma$ must be the planar theta graph $\Theta$. 
\end{COR}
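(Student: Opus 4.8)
The plan is to deduce the corollary from Proposition \ref{I-detection-braid} by a purely diagrammatic argument: once we know the ``cut-open'' tangle $T$ of $\Gamma$ is isotopic to a braid, type V moves let us trivialize that braid, and the only spatial theta graph whose tangle $T$ is the trivial braid is the planar one.

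First I would invoke Proposition \ref{I-detection-braid}: since $\rank_{\mathbb{F}} I^\sharp(\Gamma)\le 4$, the tangle $T=\Gamma\cap(S^3\setminus(B_1\cup B_2))$ is isotopic rel boundary to a braid, hence represents an element $\beta$ of the $3$-strand braid group $B_3$. Next I would analyze the effect of a type V move (Figure \ref{RV}) on $T$. Such a move is supported in a neighborhood of a vertex $v_j$ of $\Gamma$; in a diagram it slides one of the three edge-ends meeting at $v_j$ over or under a neighboring edge-end. Taking $B_j$ to be the ball around that vertex, the move inserts (or deletes) a single crossing of $T$ in a collar of $\partial B_j$; reading $T$ as a braid from $v_1$ to $v_2$, this replaces $\beta$ by $\sigma_i^{\pm1}\beta$ (for a move at $v_1$) or by $\beta\,\sigma_i^{\pm1}$ (for a move at $v_2$), where $\sigma_i$ is a standard generator of $B_3$ and $i\in\{1,2\}$ records which two of the three strands are interchanged. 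Crucially, a type V move does not change the ambient isotopy class of $\Gamma$, so it produces a new diagram of the \emph{same} graph.

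Then, since $B_3$ is generated by $\sigma_1,\sigma_2$, I would write $\beta^{-1}$ as a word $\sigma_{i_1}^{\epsilon_1}\cdots\sigma_{i_k}^{\epsilon_k}$ in these generators and apply the corresponding $k$ type V moves (at either vertex, as needed) to a diagram of $\Gamma$. This yields a diagram $D''$ of $\Gamma$ in which the cut-open tangle is the trivial $3$-braid. Reversing the cut-and-reglue construction with $T$ equal to the trivial braid gives back precisely the standard diagram of the planar theta graph $\Theta$, so $\Gamma\cong\Theta$. Combined with Proposition \ref{I-detection-braid}, this proves the corollary.

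The main obstacle---more a matter of careful bookkeeping than of genuine difficulty---is the local diagrammatic verification that one type V move acts on $T$ by left- or right-multiplication by a standard braid generator, and that, ranging over the choices of which edge to slide and of over versus under, all of $\sigma_1^{\pm1},\sigma_2^{\pm1}$ are realizable; this, together with the elementary fact $B_3=\langle\sigma_1,\sigma_2\rangle$, is exactly what makes any $\beta$ reducible to the identity by finitely many type V moves. One should also note the trivial observation that reglueing the two vertex balls onto the trivial $3$-braid reconstructs the planar $\Theta$ and not some other unknotted theta graph, which is immediate from the construction.
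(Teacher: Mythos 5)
Your argument is correct and is essentially the paper's own proof: invoke Proposition \ref{I-detection-braid} to conclude $T$ is a braid, observe that type V moves act on $T$ by multiplication by braid generators $\sigma_i^{\pm1}$ without changing the spatial graph, and trivialize the braid to recover the planar $\Theta$. The paper states this in two sentences; your write-up merely supplies the bookkeeping details of the same route.
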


Now we are ready to prove the detection theorem.
\begin{proof}[Proof of Theorem \ref{theta-detection}]
By Theorem \ref{I-ss}, Proposition \ref{H<4} and Proposition \ref{I<4}, any one in (b) (c) (d) (e) implies 
$\rank_{\mathbb{F}}I^\sharp(\Gamma)\le 4$. Therefore $\Gamma$ is the planar theta graph by Corollary \ref{I-detection-theta}.
We obtain any one in (b) (c) (d) (e) implies (a).

It is clear that (a) implies (b) (c) and (d). The last term $(e)$ is just Proposition \ref{I-theta}.

\end{proof}

\bibliography{references}
\bibliographystyle{hplain}

\Address

\end{document}